\newcommand*{\rom}[1]{\expandafter\@slowromancap\romannumeral #1@}
    \newenvironment{dedication}
        {\begin{quotation}\begin{center}\begin{em}}
        {\par\end{em}\end{center}\end{quotation}}
\newtheorem{theorem}{Theorem}[section]
\newtheorem{corollary}[theorem]{Corollary}
\newtheorem{lemma}[theorem]{Lemma}
\newtheorem{proposition}[theorem]{Proposition}
\theoremstyle{definition}
\newtheorem{definition}[theorem]{Definition}
\newtheorem{remark}[theorem]{Remark}
\newtheorem{example}[theorem]{Example}
\DeclareMathOperator{\rk}{\mathrm{rk}}
\newcommand{\tr}{\mathsf{tr}}
\DeclareMathOperator{\res}{\mathsf{res}}
\DeclareMathOperator{\ev}{\mathsf{ev}}
\DeclareMathOperator{\Sol}{\mathsf{Sol}}
\DeclareMathOperator{\VB}{\mathsf{VB}}
\DeclareMathOperator{\Pic}{\mathsf{Pic}}
\DeclareMathOperator{\Hom}{\mathsf{Hom}}
\DeclareMathOperator{\Ext}{\mathsf{Ext}}
\DeclareMathOperator{\GL}{\mathsf{GL}}
\DeclareMathOperator{\Aut}{\mathsf{Aut}}
\DeclareMathOperator{\End}{\mathsf{End}}
\DeclareMathOperator{\Mat}{\mathsf{Mat}}
\DeclareMathOperator{\lieg}{\mathfrak{g}}
\DeclareMathOperator{\lieh}{\mathfrak{h}}
\DeclareMathOperator{\lien}{\mathfrak{n}}
\DeclareMathOperator{\liew}{\mathfrak{w}}
\DeclareMathOperator{\lieb}{\mathfrak{b}}
\DeclareMathOperator{\liea}{\mathfrak{a}}
\DeclareMathOperator{\lieD}{\mathfrak{D}}
\DeclareMathOperator{\lieP}{\mathfrak{P}}
\DeclareMathOperator{\lieW}{\mathfrak{W}}
\DeclareMathOperator{\lieO}{\mathfrak{O}}
\newcommand{\kk}{\mathbbm{k}}
\newcommand{\CC}{\mathbb{C}}
\newcommand{\NN}{\mathbb{N}}
\newcommand{\ZZ}{\mathbb{Z}}
\newcommand{\PP}{\mathbb{P}}
\newcommand{\kA}{\mathcal{A}}
\newcommand{\kF}{\mathcal{F}}
\newcommand{\kO}{\mathcal{O}}
\newcommand{\kL}{\mathcal{L}}
\newcommand{\kP}{\mathcal{P}}
\newcommand{\kQ}{\mathcal{Q}}
\newcommand{\lar}{\longrightarrow}
\newcommand{\llbrace}{(\!(}
\newcommand{\rrbrace}{)\!)}
\begin{document}

\title[Vector bundles and quasi--trigonometric solutions of CYBE]{Simple vector bundles on a nodal Weierstra\ss{} cubic and quasi--trigonometric solutions of CYBE}

\author{Igor Burban}
\address{
Mathematisches Institut,
Universit\"at zu K\"oln,
Weyertal 86--90,
D--50931 K\"oln,
Germany
}
\email{burban@math.uni-koeln.de}

\author{Lennart Galinat}
\address{Mathematisches Institut,
Universit\"at zu K\"oln,
Weyertal 86--90,
D--50931 K\"oln,
Germany
}
\email{lgalinat@math.uni-koeln.de}

\author{Alexander Stolin}
\address{Department of Mathematical Sciences, Chalmers University of Technology
and the University of Gothenburg, 412 96 Gothenburg, Sweden
}
\email{astolin@chalmers.se}

\begin{abstract}
In this paper we study the combinatorics of quasi--trigonometric solutions of the classical Yang--Baxter equation, arising from simple vector bundles on a nodal Weierstra\ss{} cubic.
\end{abstract}

\maketitle
\begin{dedication}
\vspace*{1mm}{To the memory of Petr Kulish}
\end{dedication}
\section{Introduction}

\noindent
Let $\lieg = \mathfrak{sl}_n(\CC)$ for some $n \in \NN_{\ge 2}$ and $\bigl(\CC^2, 0\bigr) \stackrel{r}\lar \lieg \otimes \lieg$  be the germ of
a meromorphic function.
In this article, we study an interplay between  algebro--geometric and combinatorial aspects  of the theory of  the classical Yang--Baxter equation (CYBE)
\begin{equation}\label{E:CYBE}
\bigl[r^{12}(x_1, x_2), r^{13}(x_1, x_3)\bigr] + \bigl[r^{13}(x_1,x_3), r^{23}(x_2,x_3)\bigr] +
\bigl[r^{12}(x_1, x_2),
r^{23}(x_2, x_3)\bigr] = 0.
\end{equation}
The upper indices in this equation  encode corresponding embeddings of
$\mathfrak{g}\otimes \mathfrak{g}$ into $\mathfrak{a} \otimes \mathfrak{a} \otimes \mathfrak{a}$, where $\liea = \mathfrak{gl}_n(\CC)$.  For example, the function $r^{23}$ is defined as
$$
r^{23}: \bigl(\mathbb{C}^2, 0\bigr) \stackrel{r}\lar \mathfrak{g}\otimes \mathfrak{g}
\stackrel{\imath_{23}}\lar \mathfrak{a} \otimes \mathfrak{a} \otimes \mathfrak{a},
$$
where $\imath_{23}(a\otimes b) = 1 \otimes a  \otimes b$; the other
maps $r^{12}$ and  $r^{13}$  have a similar meaning. A solution $r$ of (\ref{E:CYBE}) is called skew--symmetric if
$r^{12}(x_1, x_2) = - r^{21}(x_2, x_1)$ and non--degenerate if the tensor $r(x_1^\circ, x_2^\circ)$ is non--degenerate for some (hence, for any generic) point $(x_1^\circ, x_2^\circ)$ from the definition domain of $r$. For any germ automorphism $(\CC, 0) \stackrel{\pi}\lar (\CC, 0)$ (change of variables) and a holomorphic map
$(\CC, 0) \stackrel{\phi}\lar \Aut(\lieg)$ (gauge transformation), we get an \emph{equivalent} solution of CYBE:
\begin{equation}\label{E:equivalence}
\widetilde{r}(x_1, x_2):= \bigl(\phi(\pi(x_1)) \otimes \phi(\pi(x_2))\bigr) r\bigl(\pi(x_1), \pi(x_2)\bigr).
\end{equation}
 Clearly, $\widetilde{r}(x_1, x_2)$ is skew--symmetric (respectively, non--degenerate) provided $r(x_1, x_2)$ is skew--symmetric (respectively, non--degenerate). Solutions of CYBE  arise in various areas of algebra, representation theory and mathematical physics;   see for example  \cite{ChariPressley, FaddeevTakhtajan}.

For the Lie algebra  $\lieg = \mathfrak{sl}_2(\CC)$, all solutions of (\ref{E:CYBE}) are  well--known; see \cite{BelavinDrinfeld, Stolin1}.  Namely, consider the following basis $
h =
\left(
\begin{array}{cc}
1 & 0 \\
0 & -1
\end{array}
\right),$
$
e =
\left(
\begin{array}{cc}
0 & 1 \\
0 & 0
\end{array}
\right)$ and $
f =
\left(
\begin{array}{cc}
0 & 0 \\
1 & 0
\end{array}
\right)
$
of  $\lieg$. Then
there exist precisely six (up to the the above equivalence relation (\ref{E:equivalence})) non--equivalent  non--degenerate skew--symmetric solutions of  CYBE.

\smallskip
\noindent
1.~One elliptic solution
\begin{equation}\label{E:elliptic}
r(x, y) =   \frac{\mathrm{cn}(z)}{\mathrm{sn}(z)} h \otimes h +
\frac{1+ \mathrm{dn}(z)}{\mathrm{sn}(z)}(e \otimes f  +
f \otimes e)  +
\frac{1 -  \mathrm{dn}(z)}
{\mathrm{sn}(z)}(e \otimes e + f \otimes f),
\end{equation}
where $z = y-x$.

\smallskip
\noindent
2.~Two (quasi--)trigonometric solutions:
\begin{equation}\label{E:trigonom1}
r(x, y) = \frac{1}{2} \frac{y+x}{y-x} h \otimes h + \frac{y}{y-x} e \otimes f + \frac{x}{y-x} f \otimes e
\end{equation}
and
\begin{equation}\label{E:trigonom2}
r(x, y) = \frac{1}{2} \frac{y+x}{y-x} h \otimes h + \frac{y}{y-x} e \otimes f + \frac{x}{y-x} f \otimes e + (x-y) f \otimes f.
\end{equation}

\smallskip
\noindent
3.~Three  rational solutions:
\begin{equation}\label{E:rat1}
r(x, y) = \frac{1}{y-x}\left(\frac{1}{2} h \otimes h + e \otimes f + f \otimes e\right),
\end{equation}
\begin{equation}\label{E:rat2}
r(x, y) = \frac{1}{y-x}\left(\frac{1}{2} h \otimes h + e \otimes f + f \otimes e\right) + y h \otimes f - x f \otimes h
\end{equation}
and
\begin{equation}\label{E:rat3}
r(x, y) = \frac{1}{y-x}\left(\frac{1}{2} h \otimes h + e \otimes f + f \otimes e\right) + h \otimes f - f \otimes h.
\end{equation}
Let  $\lieg = \mathfrak{sl}_n(\CC)$ and $a, b \in \lieg$ be such that $[a, b] = 0$. It is not difficult to check that
$$
r(x, y) = \frac{\gamma}{y-x}  + a \wedge b
$$
is a solution of (\ref{E:CYBE}), where $\gamma \in \lieg \otimes \lieg$ is the Casimir element and $a \wedge b = a \otimes b - b \otimes a$. As a consequence,
the  description  of \emph{all} solutions of (\ref{E:CYBE}) contains as a subproblem the problem of classification of all pairs of commuting square matrices. The latter problem is  known to be \emph{representation--wild}; see e.g.~\cite{GelfandPonomarev}.

In  \cite{KPSST}, the following class of the so--called \emph{quasi--trigonometric} solutions of (\ref{E:CYBE}) was introduced.  These solutions   are of the form
\begin{equation}\label{E:quasitrigon}
r(x, y) = \frac{x}{y-x} \gamma + p(x, y) = \left(\sum\limits_{n = 1}^\infty x^n y^{-n}\right) \gamma + p(x,y),
\end{equation}
where $\gamma \in \lieg \otimes \lieg$ is the Casimir element and  $p(x, y) \in (\lieg \otimes \lieg)[x, y]$.
The simplest example is the so--called \emph{standard} quasi--trigonometric solution, given by the following expression:
\begin{equation}\label{E:StandardQuasiTrig}
r_{\mathsf{st}}(x, y) = \dfrac{1}{2}\Bigl(\dfrac{y+x}{y-x} \gamma + \sum\limits_{\alpha \in \Phi_+} e_{\alpha} \wedge e_{-\alpha}\Bigr),
\end{equation}
where $\Phi_+$ denotes the set of all  positive roots of $\lieg$. For example, for  $\lieg = \mathfrak{sl}_2(\CC)$, the solution $r_{\mathsf{st}}$ is given by the formula (\ref{E:trigonom1}).

Quasi--trigonometric solutions of (\ref{E:CYBE}) have a number of special properties. Firstly, any quasi--trigonometric solution $r$
 is  automatically skew--symmetric: $r^{12}(x, y) = - r^{21}(y, x)$ (see \cite[Proposition 5]{KPSST}) and
equivalent to a \emph{trigonometric} solution (hence the name) in the sense of the Belavin--Drinfeld classification \cite{BelavinDrinfeld}
(see \cite[Theorem 19]{KPSST}). Secondly,  $r$  defines a Lie cobracket
$\delta_r$
 on the Lie algebra $\lieg[z]:= \lieg \otimes_{\CC} \CC[z]$, given by the rule
 $$
 \lieg[z] \stackrel{\delta_r}\lar \lieg[z_1] \otimes \lieg[z_2], \quad P(z) \mapsto \bigl[P(z_1) \otimes 1 + 1 \otimes P(z_2), r(z_1, z_2) \bigr].
 $$
 Hence, quasi--trigonometric solutions of CYBE  play  an important role in the classification of \emph{all} bialgebra structures on $\lieg[z]$; see \cite{MSZ}.

In this paper, we study certain  \emph{distinguished} quasi--trigonometric solutions of CYBE for the Lie algebra $\lieg = \mathfrak{sl}_n(\CC)$, attached to any natural number
$1 \le c < n$ such that
$\mathrm{gcd}(n, c) = 1$.  Let
$$\bar{\Phi} := \bigl\{(i, j) \in \NN^2 \,\big|\, 1 \le i, j \le n \bigr\} \cong \ZZ/n\ZZ \times \ZZ/n\ZZ$$
and
$\Phi_+ := \bigl\{(i, j) \in \bar{\Phi} \,\big|\,  i < j \bigr\}$. Of course, one can identify $\Phi_+$ with the set of all positive  roots
 of the Lie algebra $\lieg$. Then we have a permutation $\tau_c$ acting on the set $\bar{\Phi}$ by the following rule:
$$
\bar{\Phi} \stackrel{\tau_c}\lar \bar{\Phi}, \; (i, j) \mapsto (i+c, j+c).
$$
Since $\gcd(n, c) = 1$, the order of  $\tau_c$ is  $n$.  For any positive root $\alpha \in \Phi_+$, let
$
p_c(\alpha) = \min\left\{k \in \NN \, \big|\, \tau_c^k(\alpha) \notin \Phi_+\right\}.
$
For any $1 \le i \le n-1$, we put:
$
q_{i, c} := \tau_c^{i}(u) - \tau_c^{i-1}(u)$ and $f_{i, c} := \frac{1}{2}\bigl(\tau_c^{i}(u) + \tau_c^{i-1}(u)\bigr) - \frac{1}{n} I,
$
where $I$ is the identity matrix and $u = e_{11}$ is the first matrix unit. Then $(q_{1, c}, \dots, q_{n-1, c})$ is a basis of the standard Cartan part $\lieh$ of the Lie
algebra $\lieg$. Let $(q_{1, c}^\ast, \dots, q_{n-1, c}^\ast)$ be the dual basis of  $\lieh$ with respect to the trace form.
Then  the following result is true; see Theorem \ref{T:maincomputation} and Corollary \ref{C:Limits}.

\smallskip
\noindent
\textbf{Theorem A}. For any $1 \le c < n$ such that $\mathrm{gcd}(n, c) =1$, consider the meromorphic function
$\CC^2 \stackrel{r_c}\lar \lieg \otimes \lieg$, given by the formula
$
r_{c}(x, y) = r_{\mathsf{st}}(x, y) + u_c(x, y) + t_c,
$
where
$$
u_{c}(x,y) = \sum\limits_{\alpha \in \Phi_+}\Bigl(\Bigl(\sum\limits_{k = 1}^{p_c(\alpha)-1} e_{\tau_c^k(\alpha)} \wedge e_{-\alpha}\Bigr) +
x e_{\tau_c^{p_c(\alpha)}(\alpha)} \otimes
e_{-\alpha}  - y e_{-\alpha} \otimes x e_{\tau_c^{p_c(\alpha)}(\alpha)}\Bigr) \in (\lieg \otimes \lieg)[x, y]
$$
and $t_{c} = \sum_{i=1}^{n-1} q_{i, c}^\ast \otimes f_{i, c} \in \lieh \otimes \lieh$.
Then $r_{c}$ is a (quasi--trigonometric) solution of the classical Yang--Baxter equation (\ref{E:CYBE}). Moreover, the solutions $r_c$ and $r_d$ are gauge--equivalent, where
$d = n-c$. Next, $r_c$ is equivalent  to a degeneration  of Belavin's elliptic solution \cite{Belavin} corresponding to the primitive $n$--root of unity $\varepsilon = \exp\left(\dfrac{2\pi i c}{n}\right)$. Finally,  $r_c$ itself degenerates (in  an appropriate sense) to a \emph{distinguished rational solution} given by the formula from
\cite[Theorem 9.6]{BH}.

\smallskip
\noindent
\textbf{Example}.
For $n = 2$ and $c = 1$, the solution $r_c$ is given by the formula (\ref{E:trigonom2}). As it was already pointed out in \cite[Chapter 6]{BK4}, the quasi--trigonometric solution (\ref{E:trigonom2}) is equivalent to an appropriate  degeneration of the elliptic solution (\ref{E:elliptic}) and can be degenerated into the rational solution (\ref{E:rat2}). From the analytical point of view, a proof of the latter fact is not straightforward.

\smallskip
\noindent
For $n =3$,  the formula for $r_c$ takes the following explicit form.
\begin{itemize}
\item If $c = 1$ then
$$
u_c(x, y) = e_{23} \wedge e_{21} + (x-y)(e_{31} \otimes e_{21} + e_{21} \otimes e_{31}) + (x e_{31} \otimes e_{32} - y e_{32} \otimes e_{31})
$$
and $t_c = l_2 \wedge l_1$, where $l_1 = \mathsf{diag}(\frac{1}{3}, \frac{1}{3}, -\frac{2}{3})$ and $l_2 = \mathsf{diag}(\frac{2}{3}, -\frac{1}{3}, -\frac{1}{3})$.
\item If $c = 2$ then
$$
u_c(x, y) = e_{12} \wedge e_{32} + (x-y)(e_{31} \otimes e_{32} + e_{32} \otimes e_{31}) + (x e_{31} \otimes e_{21} - y e_{21} \otimes e_{31})
$$
and $t_c = l'_2 \wedge l'_1$, where $l'_1 = \mathsf{diag}(\frac{1}{3},  -\frac{2}{3}, \frac{1}{3})$ and $l'_2 = \mathsf{diag}(-\frac{2}{3}, \frac{1}{3}, \frac{1}{3})$.
\end{itemize}

\smallskip
\noindent
 According to the theory developed in \cite{KPSST},  the quasi--trigonometric solutions of (\ref{E:CYBE})
 can be classified (up to an appropriate equivalence relation) in the following terms. Let $\lieD := \lieg\llbrace z^{-1}\rrbrace \times \lieg$.
 Then we have a symmetric non--degenerate $\CC$--bilinear form
\begin{equation}
\langle-\; ,\,-\,\rangle:  \lieD \times \lieD \lar \CC, \quad \bigl\langle(F, f), (G, g)\bigr\rangle = \res_0 \left(\mathsf{Tr}(FG) \frac{dz}{z}\right) - \tr(f g),
\end{equation}
where $\liea\llbrace z^{-1}\rrbrace \stackrel{\mathsf{Tr}}\lar \CC\llbrace z^{-1}\rrbrace$ (respectively, $\liea \stackrel{\mathsf{tr}}\lar \CC$) is the trace map.
Note that  we have  an embedding
$\lieg[z] \stackrel{\jmath}\lar \lieD, \; P(z) \mapsto \bigl(P(z), P(0)\bigr)$
identifying   $\lieg[z]$ with a \emph{Lagrangian} Lie subalgebra $\lieP$ of $\lieD$.

 As it was shown in \cite{KPSST}, the quasi--trigonometric solutions of (\ref{E:CYBE}) are parameterized by Lagrangian subalgebras $\lieW \subset \lieD$ transversal to $\lieP$, for which there exists
some $m\in \NN$ (depending on $\lieW$)  such that $z^{-m} \lieg\llbracket z^{-1}\rrbracket \times \{0\}\subseteq \lieW$.
In other words, we have a \emph{Manin triple} in the Lie algebra $\lieD$ of the following form:
\begin{equation}
\lieD = \lieP \dotplus \lieW.
\end{equation}

On the other hand,   solutions of (\ref{E:CYBE}) can be studied using methods of algebraic geometry; see  \cite{Cherednik, Polishchuk1, Polishchuk2, BK4, BH, BurbanGalinat}.
Namely, let $
E = \overline{V\bigl(u^2 - 4v^3 + g_2 v + g_3\bigr)} \subset \PP^2
$
 be a Weierstra\ss{} cubic curve for some parameters  $g_2, g_3 \in \CC$. Such a curve is singular (nodal or cuspidal) if and only if $g_2^3  =  27 g_3^2$; in this case $E$ has a unique singular point $s$. Assume that   $\kA$ is a locally free coherent sheaf of Lie algebras on $E$ such that:
\begin{equation}\label{E:GeomAnsatz}
H^0(E, \kA) = 0 = H^1(E, \kA)\;\;  \mbox{\rm and}\;\;   \kA\big|_x \cong \lieg \;\;  \mbox{\rm for any smooth point}\;\; x \in E.
\end{equation}
\begin{center}
\begin{tikzpicture}[samples=35, scale = 0.7, dot/.style={draw,circle,minimum size=1mm,inner sep=0pt,outer sep=0pt,fill=blue}]
  \def\a{2.5}
  \def\b{1.5}
  \def\PI{3.14159265359}
  \draw[thick, domain=0:2*\PI] plot ({\a*cos(\x r)},{\b*sin(\x r)});
  \draw[thick, domain=\PI/4+0.02 : -0.01+3*\PI/4]  plot ({\a*cos(\x r)+0.78},{\b*sin(\x r) -1});
  \draw[thick, domain=-0.2+5*\PI/4: -0.01+7*\PI/4] plot ({\a*cos(\x r)+0.75},{\b*sin(\x r) +1.15});

   \draw (1.7, 0.9)  node[dot]{}  -- ++(60: 1.7cm);
   \draw (1, 1.2)  node[dot]{}  -- ++(75: 1.7cm);
   \draw (0.3,	1.3)  node[dot]{}  -- ++( 85: 1.7cm) node[ xshift= 0.2cm] {$\lieg$};
   \draw (-0.4, 1.3)  node[dot]{}  -- ++(95: 1.7cm);
   \draw (-1, 1.2)  node[dot]{}  -- ++(105: 1.7cm);
   \draw (-1.6,1)  node[dot]{}  -- ++(115: 1.7cm);
\end{tikzpicture}
\end{center}
Under these assumptions on $\kA$ (which can be weakened, when one replaces the constraint $\kA\big|_{s} \cong \lieg$ at the singular point $s \in E$ by a more general condition \cite{BurbanGalinat}), there exists
a distinguished section
$
\rho \in \Gamma\bigl(\breve{E} \times \breve{E}\setminus \Sigma, \kA \otimes \kA\bigr)$ (called \emph{geometric $r$--matrix}),
where $\breve{E}$ is the regular part of $E$ and $\Sigma \subset \breve{E} \times \breve{E}$ is the diagonal. It turns out that $\rho$   satisfies the following version of the classical Yang--Baxter equation: $$
\bigl[\rho^{12}, \rho^{13}\bigr] + \bigl[\rho^{12}, \rho^{23}\bigr] + \bigl[\rho^{13}, \rho^{23}\bigr] = 0,
$$
where both sides of the above  equality are viewed as meromorphic sections of   $\kA \boxtimes \kA \boxtimes \kA$ over the triple product $\breve{E} \times \breve{E} \times \breve{E}$.
Moreover, the section $\rho$ is skew--symmetric, i.e.
$$
\rho(x_1, x_2)^{12} = - \rho(x_2, x_1)^{21} \in \kA\big|_{x_1} \otimes \kA\big|_{x_2} \quad \mbox{\rm for any} \; x_1 \ne x_2 \in \breve{E}.
$$
In order to get a link with the conventional form of CYBE (\ref{E:CYBE}), suppose that there exists an open subset
$C \subset \breve{E}$ and  a $\Gamma(C, \kO_E)$--linear isomorphism of Lie algebras $$\Gamma(C, \kA) \stackrel{\xi}\lar \lieg \otimes_\CC \Gamma(C, \kO_E).$$
 This trivialization $\xi$ allows to rewrite  the geometric $r$--matrix $\rho$  as  a meromorphic  function
 $$r = \rho^\xi: C \times C\setminus\Sigma\lar \lieg \otimes \lieg,$$ which is a non--degenerate skew--symmetric solution of the  classical Yang--Baxter equation (\ref{E:CYBE}).
 Another choice  of a local trivialization of $\kA$ leads to a \emph{gauge--equivalent} solution.

A natural class of sheaves of Lie algebras $\kA$ satisfying the condition (\ref{E:GeomAnsatz}) arises from the following construction. Let
 $\kP$ be a \emph{simple} vector bundle on a Weierstra\ss{} curve $E$ (i.e.~$\End_E(\kP) = \CC$), $n = \rk(\kP)$ be its rank and $d = \deg(\kP)$ be its degree.
 The theory of simple vector bundles on Weierstra\ss{} cubics is well--understood \cite{Atiyah, BodnarchukDrozd, Burban1}.
 It turns out that $\gcd(n, d) = 1$ and for any other simple vector bundle $\kQ$ of rank $n$ and degree $d$ on $E$, there exists a line bundle
 $\kL \in \Pic^0(E)$ such that $\kQ \cong \kP \otimes \kL$. Moreover, for any $(n,d) \in \NN \times \ZZ$ satisfying the condition $\gcd(n, d) = 1$, there exists a simple vector bundle of rank $n$ and degree $d$ on $E$.

 \smallskip
 \noindent
 Let $\kA  = \mbox{\it Ad}_E(\kP)$ be the sheaf of Lie algebras on $E$ given by the short exact sequence
\begin{equation}\label{E:DefAd}
0 \lar \kA \lar \mbox{\it End}_E(\kP) \stackrel{\mathsf{tr}}\lar \kO \lar 0.
\end{equation}
From what was said above it follows that  $\kA = \kA_{(n,d)}$ does not depend on the particular choice of simple vector bundle $\kP$ (and as a consequence,  is uniquely determined
by the pair $(n, d)$ such that $\gcd(n,d) = 1$) and satisfies the constraints (\ref{E:GeomAnsatz}). Moreover, it follows that $\kA_{(n, d)} \cong \kA_{(n, n+d)}$.  Summing up, the geometric $r$--matrix attached
to the pair $(E, \kA)$, defines a non--degenerate skew--symmetric solution $r_{(E, (n,d))}$ of (\ref{E:CYBE}) for the Lie algebra $\lieg = \mathfrak{sl}_n(\CC)$,
whose type is fully determined by the type of the underlying Weierstra\ss{} curve $E$ and a natural number $0 < d < n$, which is mutually prime to $n$.

It is therefore a very natural problem to determine the corresponding solutions of (\ref{E:CYBE}) explicitly. It turns out, that for an elliptic curve
$E$, one gets precisely the elliptic solutions of Belavin \cite{Belavin}, where a choice of $0 <d < n$ mutually prime to $n$
corresponds precisely to a choice of a primitive $n$-th root of $1$; see  \cite[Theorem 5.5]{BH}. For a cuspidal curve $E$,
one gets  a distinguished rational solution of (\ref{E:CYBE}), whose combinatorics
(in the sense of the works of the third--named author \cite{Stolin1, Stolin2}) was determined in \cite[Theorem 9.8]{BH}. In this paper, we treat  the nodal case.

Let $0 < d < n$ be such that $\gcd(n, d) = 1$. For any $X \in \lieg$, we use the following notation:
\begin{equation}\label{E:partition}
X = \left(
\begin{array}{c|c}
A & B \\
\hline
C & D
\end{array}
\right)
\quad \mbox{\rm and} \quad
X^\sharp := \left(
\begin{array}{c||c}
D & C \\
\hline\hline
B & A
\end{array}
\right),
\end{equation}
where $A$ and $D$ are square matrices of sizes $c\times c$ (for $c := n-d$) and $d \times d$ respectively. In this notation, we put:
\begin{equation}\label{E:solieb}
\lien_{(c,d)} :=
\left\{
\left(
\begin{array}{c|c}
0 & 0 \\
\hline
C & 0
\end{array}
\right)
\right\}
\subset
\lieb_{(c,d)} :=
\left\{
\left(
\begin{array}{c|c}
A & 0 \\
\hline
C & D
\end{array}
\right)
\right\} \subset \lieg.
\end{equation}
The main result of this article is the following (see Theorem \ref{T:CremmerGervaisOrder}, Theorem \ref{T:maincomputation} and Theorem \ref{T:GeomRmatrExplicit}).

\smallskip
\noindent
\textbf{Theorem B}. Let $0 < d < n$ be such that $\gcd(n,d) = 1$.  Then the vector  space $$\lieW_{(c,d)} := \bigl(z^{-2} \lieg\llbracket z^{-1}\rrbracket + z^{-1} \lieb_{(c,d)} + \lien_{(c,d)}\bigr) \times \{0\} + \widetilde{\Delta}_{(c,d)} \subset \lieD,$$
where
\begin{equation}\label{E:twisteddiagonal}
\widetilde{\Delta}_{(c,d)} :=
\left\{
\left(
\left(
\begin{array}{c|c}
A & z^{-1}B \\
\hline
zC & D
\end{array}
\right),
\left(
\begin{array}{c||c}
D & C \\
\hline\hline
B & A
\end{array}
\right)
\right)
\left| \; \mbox{\rm for all} \;  \left(
\begin{array}{c|c}
A & B \\
\hline
C & D
\end{array}
\right) \in \lieg \right.
\right\},
\end{equation}
 is a Lagrangian Lie subalgebra of $\lieD$  such that
$\lieD = \lieP \dotplus \lieW$.  Moreover, the  quasi--tri\-go\-no\-met\-ric solution corresponding to $\lieW_{(c,d)}$, is precisely the solution
 $r_c$ from Theorem A. It is equivalent to the geometric $r$--matrix
$r_{(E, (n,d))}$, where $E$ is a nodal Weierstra\ss{} curve.

\medskip
\noindent
We hope that the geometric study of quasi--trigonometric solutions of CYBE will find applications in  the theory of integrable systems \cite{FaddeevTakhtajan}.

\medskip
\noindent
\emph{Acknowledgement}. The work of the first- and the second--named authors  was supported  by the  project Bu--1866/3--1 as well as by
the CRC/TRR 191 project ``Symplectic Structures in Geometry, Algebra and Dynamics'' of German Research Council (DFG).

\section{Review the geometric theory of the classical Yang--Baxter equation}

\noindent
We begin with a quick  review of the geometric theory of the classical Yang--Baxter equation (\ref{E:CYBE}),
following the exposition of \cite{BurbanGalinat}; see also  \cite{Cherednik, Polishchuk1, Polishchuk2, BK4, BH}. For $g_2, g_3 \in \CC$, let
\begin{equation}\label{E:WeierstrassFamily}
E = \overline{V\bigl(u^2 - 4v^3 + g_2 v + g_3\bigr)} \subset \PP^2
\end{equation}
 be the corresponding  Weierstra\ss{} cubic curve.
It is well--known that
\begin{itemize}
\item $E$ is smooth (i.e.~$E$ is an elliptic curve) if and only if $g_2^3  \ne   27 g_3^2$.
\item If $g_2^3  =   27 g_3^2$ then $E$ has a unique singular point $s$, which is
\begin{itemize}
\item a nodal singularity  if $(g_2, g_3) \ne (0, 0)$,
\item respectively a cuspidal singularity  if $(g_2, g_3) =  (0, 0)$.
\end{itemize}
\item We have: $\Gamma(E, \Omega) \cong \CC$, where $\Omega$ is the sheaf of regular differential one--forms on $E$ (taken in the Rosenlicht sense
if  $E$ is singular; see e.g.~\cite[Section II.6]{BarthHulekPetersVen}).
\end{itemize}
As the next ingredient, we need  a  coherent sheaf of Lie algebras $\kA$ on $E$ such that:
\begin{enumerate}
\item $H^0(E, \kA) = 0 = H^1(E, \kA)$;
\item $\kA$ is weakly $\lieg$--locally free on $\breve{E}$, i.e.~$\kA\big|_x \cong \lieg$ for all $x \in \breve{E}$.
\end{enumerate}
From the first assumption it follows that  the sheaf $\kA$ is torsion free on $E$ (in particular, its restriction $\kA^\circ := \kA\big|_{\breve{E}}$ on the regular part $\breve{E}:= E \setminus \{s\}$ is
 locally free).
The second assumption on $\kA$  implies that
\begin{itemize}
\item The  canonical isomorphism of $\kO_{\breve{E}}$--modules $\kA^\circ  \otimes \kA^\circ \lar \mathit{End}_{\breve{E}}\bigl(\kA^\circ\bigr)$, induced by the Killing forms of the Lie algebras of local sections of $\kA$, is an isomorphism.
\item The space  $A_K$ of global sections of the rational envelope of $\kA$ is  a simple Lie algebra over the field  $K$ of meromorphic functions on $E$.
\end{itemize}
A choice of a global regular one--form $0 \ne \omega \in \Gamma(E, \Omega)$ defines the so--called residue short exact sequence (see for instance \cite[Section 3.1]{BurbanGalinat}):
\begin{equation}\label{E:residue1}
0 \lar \kO_{E \times \breve{E}} \lar \kO_{E \times \breve{E}}(\Sigma) \xrightarrow{\res_\Sigma^\omega} \kO_\Sigma \lar 0,
\end{equation}
where $\Sigma \subset E \times \breve{E}$ denotes the diagonal. Tensoring  (\ref{E:residue1})  with $\kA \boxtimes \kA^\circ$ and applying then the functor $\Gamma(E \times \breve{E}, \,-\,)$, we get
an injective  $\CC$--linear map $$\End_{\breve{E}}(\kA^\circ) \stackrel{T_\omega}\lar \Gamma\bigl(\breve{E} \times \breve{E} \setminus \Sigma, \kA^\circ \boxtimes \kA^\circ\bigr),$$ making the following diagram
\begin{equation*}
\begin{array}{c}
\xymatrix{
\Gamma\bigl(\breve{E}, \kA^\circ \otimes \kA^\circ\bigr) \ar[d]_-\cong & & \Gamma\bigl(E \times \breve{E}, \kA \boxtimes \kA^\circ(\Sigma)\bigr) \ar[ll]_-{\cong} \ar@{^{(}->}[d] \\
 \End_{\breve{E}}(\kA^\circ) \ar[rr]^-{T_\omega} & & \Gamma\bigl(\breve{E} \times \breve{E} \setminus \Sigma, \kA^\circ \boxtimes \kA^\circ\bigr)
}
\end{array}
\end{equation*}
commutative. In this way, we get a \emph{distinguished section}
\begin{equation}
\rho:= T_\omega(\mathrm{id}_{\kA^\circ}) \in \Gamma\bigl(\breve{E} \times \breve{E} \setminus \Sigma, \kA^\circ \boxtimes \kA^\circ\bigr),
\end{equation}
called \emph{geometric $r$--matrix} attached to a pair $(E, \kA)$ as above.

\smallskip
\noindent
If the curve $E$ is singular, we additionally require  that
\begin{enumerate}
  \setcounter{enumi}{2}
  \item
 the germ $A_s$ of the sheaf $\kA$ at the singular point $s$ is a coisotropic Lie subalgebra of $A_K$ with respect to the pairing
  $$
\kappa^\omega: A_K \times A_K \stackrel{\kappa}\lar K \stackrel{\res_s^\omega}\lar \CC,
$$
where $\kappa$ is the Killing form of $A_K$ and $\res_s^\omega(f) = \res_s(f\omega)$ for $f \in K$ (taken in the Rosenlicht sense).
\end{enumerate}

\smallskip
\noindent
Then the following result is true; see  \cite[Theorem 4.3]{BurbanGalinat}.
\begin{theorem}\label{T:GeoemtryCYBE}
Let $(E, \kA)$ be a pair satisfying the properties (1)--(3) above.
Then we have:

\smallskip
\noindent
1.~As the name suggests, the geometric $r$--matrix $\rho$ satisfies the following version of the classical Yang--Baxter equation: $$
\bigl[\rho^{12}, \rho^{13}\bigr] + \bigl[\rho^{12}, \rho^{23}\bigr] + \bigl[\rho^{13}, \rho^{23}\bigr] = 0,
$$
where both sides of the above  equality are viewed as meromorphic sections of   $\kA \boxtimes \kA \boxtimes \kA$ over the triple product $\breve{E} \times \breve{E} \times \breve{E}$.

\smallskip
\noindent
2.~Moreover, the section $\rho$ is skew--symmetric, i.e.
$$
\rho(x_1, x_2)^{12} = - \rho(x_2, x_1)^{21} \in \bigl(\kA \boxtimes \kA\bigr)\big|_{(x_1, x_2)} \cong \kA\big|_{x_1} \otimes \kA\big|_{x_2} \quad \mbox{\rm for any} \; x_1 \ne x_2 \in \breve{E}.
$$

\smallskip
\noindent
3.~Finally, there exists an open subset $U \subseteq \breve{E}$ such that for any $x_1 \ne x_2 \in U$, the tensor $\rho(x_1, x_2) \in \kA\big|_{x_1} \otimes \kA\big|_{x_2}$ is non--degenerate.
\end{theorem}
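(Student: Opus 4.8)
The plan is to prove the three assertions in the order (2), (3), (1), since skew--symmetry and the precise shape of the singularity of $\rho$ along the diagonal are exactly the inputs one needs to control the Yang--Baxter expression. The computational heart of everything is the \emph{normal form} of $\rho$ along $\Sigma$. By construction $\rho = T_\omega(\id_{\kA^\circ})$, and under the Killing--form isomorphism $\kA^\circ \otimes \kA^\circ \cong \End_{\breve{E}}(\kA^\circ)$ the identity endomorphism corresponds fibrewise to the Casimir element. Hence, in any local trivialization $\kA^\circ \cong \lieg \otimes \kO$ and any local coordinate, the residue sequence (\ref{E:residue1}) forces
$$
\rho(x_1, x_2) = \frac{\gamma}{x_1 - x_2} + (\text{regular along } \Sigma),
$$
where $\gamma \in \lieg \otimes \lieg$ is the Casimir. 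I would extract this normal form first and then feed it into each of the three parts.

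For skew--symmetry (part 2), the Killing form is symmetric, so $\id_{\kA^\circ}$ is self--adjoint and transposing the two tensor factors returns $\rho$; on the other hand, swapping the two points $x_1 \leftrightarrow x_2$ reverses the sign of $\res_\Sigma^\omega$, because $\frac{1}{x_1-x_2} = -\frac{1}{x_2-x_1}$ while $\Sigma$ itself is symmetric. Combining these two symmetries yields $\rho(x_1,x_2)^{12} = -\rho(x_2,x_1)^{21}$ exactly. For non--degeneracy (part 3), the normal form shows that, regarded as a homomorphism $(\kA|_{x_1})^\ast \to \kA|_{x_2}$, the tensor $\rho(x_1,x_2)$ has leading term the non--degenerate Casimir as $x_2 \to x_1$; thus its determinant is a meromorphic function on $\breve{E} \times \breve{E}$ that is not identically zero, so it is invertible on a dense open subset $U$.

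The core is part (1), which I would settle in two steps. First, the three--term expression $W := [\rho^{12},\rho^{13}] + [\rho^{12},\rho^{23}] + [\rho^{13},\rho^{23}]$ is a priori a meromorphic section of $\kA \boxtimes \kA \boxtimes \kA$ over $\breve{E}^3$ with at worst simple poles along the diagonals $\Sigma_{12}, \Sigma_{13}, \Sigma_{23}$. I would compute the residue along, say, $\Sigma_{12}$ using the normal form: writing the first two terms as $[\rho^{12}, \rho^{13}+\rho^{23}]$ and extracting the polar part $\frac{1}{x_1-x_2}[\gamma^{12}, \rho^{13}(x_1,x_3)+\rho^{23}(x_2,x_3)]$, the restriction to $x_2 = x_1$ is $[\gamma^{12}, \rho^{13}+\rho^{23}]$ with the two merged points, which vanishes by the $\ad$--invariance of $\gamma$ (the third term has no pole along $\Sigma_{12}$). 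By symmetry the residues along all three diagonals vanish, so $W$ is regular on all of $\breve{E}^3$. Second, I would fix generic $x_2, x_3 \in \breve{E}$ and regard $x_1 \mapsto W(x_1,x_2,x_3)$ as a section of $\kA$ over $E$: each $\rho^{1j}$ extends over the first factor $E$ (this is the content of the commutative diagram producing $\rho \in \Gamma(E \times \breve{E}, \kA \boxtimes \kA^\circ(\Sigma))$), the diagonal poles have just been cancelled, and condition (3) guarantees regularity at the singular point $s$. Since $H^0(E,\kA)=0$, this section is zero; as $(x_2,x_3)$ were generic, $W \equiv 0$.

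The main obstacle is the behaviour at the singular point $s$: away from $s$ this is the classical Cherednik--Polishchuk computation, but to conclude that $W(\ARG,x_2,x_3)$ lies in $H^0(E,\kA)$ one must verify that $W$ acquires no pole at $s$. This is precisely where the coisotropy hypothesis (3) enters, the coisotropy of $A_s$ with respect to the residue pairing $\kappa^\omega$ being what makes $\rho$ well behaved across $s$ and keeps $W$ a regular global section. I would make this rigorous by analysing the local structure of $\kA$ and of the induced pairing at $s$, and checking that the coisotropic condition annihilates the potential polar contribution at $s$, in the same way that $\ad$--invariance of $\gamma$ annihilated it along the diagonals.
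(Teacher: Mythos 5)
The paper does not prove this theorem itself: it is quoted verbatim from \cite{BurbanGalinat} (Theorem 4.3 there), so there is no internal proof to compare against. Judged on its own merits, your treatment of parts (1) and (3) follows the standard Cherednik--Polishchuk-type strategy that the cited reference also employs, and it is essentially sound: the residue of $\rho$ along $\Sigma$ is the Casimir tensor; the residues of $W$ along the three diagonals vanish by $\ad$--invariance of $\gamma$; and, for fixed $x_2, x_3 \in \breve{E}$, the expression $W(\ARG, x_2, x_3)$ becomes a global section of $\kA \otimes \bigl(\kA\big|_{x_2} \otimes \kA\big|_{x_3}\bigr)$ over all of $E$ --- regular at $s$ for free, because each $\rho^{1j}$ lies in $\Gamma\bigl(E \times \breve{E}, \kA \boxtimes \kA^\circ(\Sigma)\bigr)$ and hence extends across $s$ in its \emph{first} argument, while the Lie brackets are computed either in the stalk at $x_1$ or in the fibers over the smooth points $x_2, x_3$ --- so $H^0(E, \kA) = 0$ finishes. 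Part (3) via the non-vanishing meromorphic determinant is also fine (modulo shrinking to a product neighbourhood $U \times U$).

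The genuine gap is in part (2). Your two observations (self-adjointness of $\id_{\kA^\circ}$ for the Killing form, antisymmetry of $(x_1 - x_2)^{-1}$) only show that the \emph{polar parts} of $\rho(x_1,x_2)^{12}$ and $-\rho(x_2,x_1)^{21}$ along $\Sigma$ agree; they say nothing about the regular parts, so ``combining these two symmetries'' does not yield the identity. The natural repair --- the difference is regular along $\Sigma$, so extend it over $E$ in one variable and invoke $H^0(E,\kA)=0$ --- does not go through directly either: $\rho$ is only known to extend across $s$ in its first argument, and whichever of $x_1, x_2$ you globalize over $E$, one of the two summands is then being evaluated at $s$ in its \emph{second} argument. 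The correct argument is an adjointness statement proved with the residue theorem: for $f \in \Gamma\bigl(E, \kA(x_1)\bigr)$ and $g \in \Gamma\bigl(E, \kA(x_2)\bigr)$, the meromorphic differential $\kappa(f,g)\,\omega$ has residues only at $x_1$, $x_2$ and $s$; the vanishing of their sum, together with the vanishing of the residue at $s$, gives $\kappa\bigl(\res^\omega_{x_1}f, \ev_{x_1}g\bigr) + \kappa\bigl(\ev_{x_2}f, \res^\omega_{x_2}g\bigr) = 0$, i.e.\ the adjoint of $\rho^{\sharp}(x_1,x_2)$ is $-\rho^{\sharp}(x_2,x_1)$, which is exactly skew-symmetry. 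The vanishing of the residue at $s$ is precisely where the coisotropy hypothesis (3) on $A_s$ is consumed --- in part (2), not, as you suggest, in securing the regularity of $W$ at $s$ in part (1), which in your own set-up holds automatically.
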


\begin{remark}
Suppose that $E$ is singular and  $\kA\big|_{s} \cong \lieg$. Then the condition (3) on the stalk $A_s$ is automatically fulfilled.
\end{remark}

\medskip
\noindent
Assume that there exists an open subset
$C \subset \breve{E}$ and  an $R$--linear isomorphism of Lie algebras $\Gamma(C, \kA) \stackrel{\xi}\lar \lieg \otimes_\CC R$, where $R = \Gamma(C, \kO_E)$.
 This trivialization $\xi$ allows to present the geometric $r$--matrix $\rho$  as  a meromorphic  function
$r = \rho^\xi: C \times C\setminus\Sigma\lar \lieg \otimes \lieg,$ which is a non--degenerate solution of the  conventional classical Yang--Baxter equation (\ref{E:CYBE}).

\smallskip
\noindent
A different  choice of an  $R$--linear isomorphism of Lie algebras $\Gamma(C, \kA) \stackrel{\zeta}\lar \lieg \otimes_\kk R$ leads to  an $R$--linear Lie algebra automorphism  $\phi \in \Aut_R\bigl(\lieg \otimes_\CC R\bigr)$, making the following  diagram
    $$
    \xymatrix{
    & \Gamma(C, \kA^\circ) \ar[ld]_-\xi \ar[rd]^-\zeta & \\
 \lieg \otimes_\CC R \ar[rr]^-\phi & &    \lieg \otimes_\CC R
    }
    $$
    commutative.
In these terms we have: $
\bigl(\phi(x_1) \otimes \phi(x_2)\bigr)\rho^\xi(x_1, x_2) = \rho^\zeta(x_1, x_2),
$
for any $x_1 \ne x_2 \in C$, i.e.~the solutions $\rho^\xi$ and $\rho^\zeta$ are \emph{gauge--equivalent}.

Let $r(x_1, x_2)$ be any non--degenerate and skew--symmetric solution of (\ref{E:CYBE}). According to a result
of   Belavin and Drinfeld \cite{BelavinDrinfeld2},  there exists a germ endomorphism $(\CC, 0) \stackrel{\pi}\lar (\CC, 0)$ and a gauge transformation
$(\CC, 0) \stackrel{\phi}\lar \Aut(\lieg)$ such that the meromorphic germ
$$
\widetilde{r}(x_1, x_2):= \bigl(\phi(x_1) \otimes \phi(x_2)\bigr) r\bigl(\pi(x_1), \pi(x_2)\bigr)
$$
depends only on the difference of the spectral variables $x_2-x_1$. In other words, there exists a meromorphic germ
$(\CC, 0) \stackrel{\varrho}\lar \lieg$ such that $\widetilde{r}(x_1, x_2) =  \varrho(x_2-x_1)$. In these terms, the classical Yang--Baxter equation (\ref{E:CYBE}) reduces to the form:
\begin{equation}\label{E:CYBE1}
\bigl[\varrho^{12}(x), \varrho^{13}(x+y)\bigr] + \bigl[\varrho^{13}(x+y), \varrho^{23}(y)\bigr] +
\bigl[\varrho^{12}(x), \varrho^{23}(y)\bigr] = 0
\end{equation}
Another  result of Belavin and Drinfeld \cite{BelavinDrinfeld} asserts that any non--degenerate solution of (\ref{E:CYBE1}) is either elliptic, or
trigonometric or rational. In the geometric terms, the type of the geometric $r$--matrix attached to a pair $(E, \kA)$ is
determined by the type of the Weierstra\ss{} curve $E$: elliptic curves correspond to  elliptic solutions, nodal curves give trigonometric solutions and cuspidal curves lead to rational solutions.
Moreover, at least all elliptic and rational solutions arise from an appropriate geometric $r$--matrix;  see \cite[Remark 4.13 and Theorem 5.3]{BurbanGalinat} as well as  references therein.

\section{Simple vector bundles on a nodal Weierstra\ss{} curve}\label{S:BundlesNodalCurve}

\noindent
From now on, let $E$ be a nodal Weierstra\ss{} curve and $\PP^1 \stackrel{\nu}\lar E$ be its normalization.  We can choose homogeneous coordinates
$(z_0: z_1)$ on $\PP^1$ in such a way that $\nu^{-1}(s) = Z:= \{0, \infty\}$, where $0 := (1:0)$ and $\infty := (0:1)$. The choice of homogeneous coordinates on $\PP^1$ also determines two distinguished sections $z_0, z_1 \in \Gamma\bigl(\PP^1, \kO_{\PP^1}(1)\bigr)$, vanishing at the points $\infty$ and $0$, respectively.
For any $k \in \NN$, we get a distinguished basis
$z_0^k, z_0^{k-1} z_1, \dots, z_1^k$ of the vector space $\Hom_{\PP^1}\bigl(\kO_{\PP^1}, \kO_{\PP^1}(k)\bigr)$. According to a theorem of Birkhoff--Grothendieck, any vector bundle  $\kF$ on $\PP^1$ splits into a direct sum of line bundles:
\begin{equation}\label{E:BirkGrothDecomp}
\kF \cong \bigoplus\limits_{k \in \ZZ} \bigl(\kO_{\PP^1}(k)\bigr)^{\oplus n_k}.
\end{equation}
For any $k \in \NN$, we have
a trivialization $
\kO_{\PP^1}(k)\Big|_{Z} \stackrel{\xi_k}\lar \kO_Z,
$
given on the level of local sections by the rule
\begin{equation}\label{E:trivialization}
\kO_{\PP^1}(k)\Big|_{Z} \stackrel{\xi_k}\lar \kO_Z = \CC_0 \times \CC_\infty, \quad g \mapsto \left(\dfrac{g}{z_0^k}, \dfrac{g}{z_1^k}\right).
\end{equation}
In this way, for any vector bundle $\kF$ with a fixed direct sum decomposition (\ref{E:BirkGrothDecomp}), we get the induced  trivialization $\xi_\kF:
\kF\Big|_{Z} \lar \kO_Z^n,
$
where $n := \sum\limits_{k \in \ZZ} n_k$ is the rank of $\kF$.

\smallskip
\noindent
Let $k, l \in \ZZ$ be such that   $k \le l$ and $\kO_{\PP^1}(k) \stackrel{g}\lar \kO_{\PP^1}(l)$ be a morphism given by a homogeneous polynomial
$g \in \CC[z_0, z_1]$ of degree $l-k$. Then the following diagram is commutative:
$$
\xymatrix{
\kO_{\PP^1}(k)\big|_Z \ar[rr]^{g\big|_Z} \ar[d]_-{\xi_k}& & \kO_{\PP^1}(l)\big|_Z \ar[d]^-{\xi_l}\\
\CC_0 \times \CC_\infty \ar[rr]^{\left(\begin{smallmatrix} g(1,0) & 0 \\ 0 & g(0,1) \end{smallmatrix}\right)} & & \CC_0 \times \CC_\infty
}
$$
In order to describe vector bundles on the nodal Weierstra\ss{} cubic $E$, consider the following Cartesian diagram in the category of schemes:
$$
\xymatrix{
Z \ar@{^{(}->}[r]^-\imath \ar@{->>}[d]_-\jmath & \PP^1 \ar@{->>}[d]^-{\nu} \\
\{s\} \ar@{^{(}->}[r] & E.
}
$$
The key  idea  in the  study of  the category $\VB(E)$ of vector bundles   on $E$ (or, more generally, the category of coherent torsion free sheaves on an arbitrary singular (rational) curve) is to realize it as a full subcategory of the comma category $\mathsf{Comma}(E)$, attached with a pair of functors
$$
\xymatrix{
\VB(\PP^1) \ar[r]^-{\imath^\ast} & \bigl(\CC\times \CC\bigr)-\mathsf{mod} &  \ar[l]_-{\jmath^\ast} \CC-\mathsf{mod}}.
$$
Any object of this comma--category is a triple $\bigl(\kF, \CC^n, \Theta\bigr)$, where $\kF$ is a locally free sheaf on $\PP^1$ and
 $\jmath^*(\CC^n) \stackrel{\Theta}\lar
\imath^*\bigl(\kF\bigr)$ is the gluing map. Using the trivialization $\xi_\kF$, the gluing map $\Theta$ can be presented by a pair of matrices $(\Theta_0, \Theta_\infty)$ of the same size:
$$
\xymatrix{
\jmath^*(\CC^n) \ar[rr]^-\Theta \ar[d]_-\cong & & \imath^*\kF \ar[d]^-{\xi_{\kF}} \\
\CC^n \times \CC^n \ar[rr]^{(\Theta_0,\; \Theta_\infty)} & & \CC^n \times \CC^n.
}
$$
The definition of morphisms in the comma--category is straightforward.

\smallskip
\noindent
For any vector bundle $\kP$ on $E$, let  $\Theta_\kP: \jmath^*\bigl(\kP\big|_{s}\bigr) \lar \imath^* \bigl(\nu^*\kP\bigr)$ be the canonical isomorphism.
It turns out that the functor
\begin{equation}\label{E:equivcat}
\VB(E) \lar \mathsf{Comma}(E), \quad  \kP \mapsto \bigl(\nu^*\kP, \kP\big|_{s}, \Theta_\kP\bigr)
\end{equation}
 is fully faithful. Moreover,   its essential image consists precisely of those triples
$\bigl(\kF, \CC^n, \Theta\bigr)$, for which the gluing morphism $\Theta$ is an isomorphism \cite{Thesis, Survey}. In this way, one can reduce the study of vector bundles on singular curves (in particular, on degenerate elliptic curves) to certain \emph{matrix problems}; see for instance \cite{DrozdGreuel, OldSurvey, Thesis, Survey, BodnarchukDrozd,BK4}.

\begin{definition}\label{D:blowup} Let $c, d \in \NN$ be mutually prime and $n = c+d$. In what follows, we present any matrix $X \in \Mat_{n\times n}(\CC)$ in the block form
$
X = \left(
\begin{array}{c|c}
A & B \\
\hline
C & D
\end{array}
\right),
$
 where   $A$ and $D$ are square matrices of sizes $ c\times c$  and $d \times d$ respectively. We define the matrix $K_{(c,d)} \in \Mat_{n\times n}(\CC)$ by the following recursive procedure.
\begin{enumerate}
\item For $(c, d) = (1,1)$, we  put $K_{(1,1)} = \left(\begin{array}{c|c} 0 & 1 \\ \hline 1 & 0\end{array}\right)$.
\item If $K_{(c,d)} = \left(\begin{array}{c|c} K_1 & K_2 \\ \hline K_3 & K_4\end{array}\right)$ then we write:
$$
K_{(c+d, d)} =
\left(
\begin{array}{cc|c}
K_1 & K_2 & 0 \\
0 & 0 & I_d \\
\hline
K_3 & K_4 & 0
\end{array}
\right) \quad \mbox{and} \quad
K_{(c, d+c)} =
\left(
\begin{array}{c|cc}
0 & 0 & I_c \\
\hline
K_3 & K_4 & 0 \\
K_1 & K_2 & 0
\end{array}
\right),
$$
where $I_d$ (respectively, $I_c$) is the square matrix of size $d \times d$ (respectively, $c\times c$).
\end{enumerate}
\end{definition}

\begin{lemma} For any $e, d\in \NN$ such that $\gcd(c,d) = 1$, we have:
$$
K_{(c,d)} =
\left(
\begin{array}{cc}
0 & I_c \\
I_d & 0
\end{array}
\right) =
\left(
\begin{array}{cccccc}
0 & \dots & 0 & 1 & \dots & 0 \\
\vdots &  & \vdots  & & \ddots & \\
0 & \dots & 0 & 0 & \dots & 1 \\
1 & \dots & 0 & 0 & \dots & 0 \\
\vdots & \ddots & \vdots &  \vdots  & & \vdots \\
0 & \dots & 1 & 0 & \dots & 0
\end{array}
\right).
$$
Next,
$
J_{(c,d)} := K_{(c,d)}^{-1} = \left(
\begin{array}{cc}
0 & I_d \\
I_c & 0
\end{array}
\right)
$
and  for any
$
X = \left(
\begin{array}{c|c}
A & B \\
\hline
C & D
\end{array}
\right)
\in \Mat_{n\times n}(\CC)
$
we have:
\begin{equation}\label{E:Xsharp}
X^\sharp:= J_{(c,d)}\cdot  X \cdot J_{(c,d)}^{-1} = \left(
\begin{array}{c||c}
D & C \\
\hline\hline
B & A
\end{array}
\right).
\end{equation}
\end{lemma}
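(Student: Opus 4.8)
The plan is to prove first the explicit formula $K_{(c,d)} = \left(\begin{smallmatrix} 0 & I_c \\ I_d & 0\end{smallmatrix}\right)$, and then to deduce the two remaining assertions (the value of the inverse $J_{(c,d)}$ and the conjugation formula \eqref{E:Xsharp}) by direct block matrix multiplication. For the latter two, writing $K := \left(\begin{smallmatrix} 0 & I_c \\ I_d & 0\end{smallmatrix}\right)$ and $J := \left(\begin{smallmatrix} 0 & I_d \\ I_c & 0\end{smallmatrix}\right)$, a routine evaluation of the four blocks of the products $KJ$ and $JK$ (using that the column partition $(d,c)$ of $K$ matches the row partition $(d,c)$ of $J$) gives $KJ = JK = I_n$, whence $J = K^{-1} = J_{(c,d)}$; similarly, multiplying out $J \cdot X \cdot K$ for $X = \left(\begin{smallmatrix} A & B \\ C & D\end{smallmatrix}\right)$ partitioned as $(c,d)$ produces $\left(\begin{smallmatrix} D & C \\ B & A\end{smallmatrix}\right)$, now partitioned as $(d,c)$, which is exactly \eqref{E:Xsharp}. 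These steps are mechanical once the closed form is in hand, so the mathematical content lies entirely in establishing it.

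The heart of the matter is therefore the closed form, which I would prove by recasting it in the language of permutation matrices. Observe that, with $n = c+d$, the matrix $\left(\begin{smallmatrix} 0 & I_c \\ I_d & 0\end{smallmatrix}\right)$ is precisely the permutation matrix of the cyclic rotation $\sigma_{(c,d)} \colon \ZZ/n\ZZ \to \ZZ/n\ZZ$, $k \mapsto k + c$ (with representatives taken in $\{1, \dots, n\}$): its $j$-th column is $e_{\,j + c \bmod n}$. The claim is thus that the recursively defined $K_{(c,d)}$ is this rotation matrix. I would argue by induction on $n = c + d$ along the subtractive Euclidean algorithm: every coprime pair $(c,d)$ with $c, d \ge 1$ arises from $(1,1)$ by the Stern--Brocot moves $(c,d) \mapsto (c+d, d)$ and $(c,d) \mapsto (c, c+d)$ (if $c > d$ it descends to $(c-d, d)$, if $c < d$ to $(c, d-c)$, and $c = d$ forces $c = d = 1$), so the induction both starts and terminates correctly, the base case $K_{(1,1)} = \left(\begin{smallmatrix} 0 & 1 \\ 1 & 0\end{smallmatrix}\right)$ being the transposition $1 \leftrightarrow 2$, i.e.\ rotation by $1$ modulo $2$.

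For the inductive step I would read off, directly from the two recursion formulas of Definition \ref{D:blowup}, the induced rule on the underlying permutation; this is where the reformulation pays off, since it lets me avoid writing out the (genuinely messy) symmetric $(c,d)$-blocks $K_1, \dots, K_4$ of the rotation matrix. Concretely, in the first formula each old column keeps its nonzero entry but has its row shifted down by $d$ precisely when that entry lay in the bottom $d$ rows, i.e.\ when $\sigma_{(c,d)}(m) > c$, while the $d$ freshly inserted columns map onto the inserted middle rows; a short case analysis on whether $\sigma_{(c,d)}(m)$ lies in $\{1, \dots, c\}$ or in $\{c+1, \dots, c+d\}$ then confirms that rotation by $c$ modulo $c+d$ is sent to rotation by $c+d$ modulo $c + 2d$, as required for $(c+d, d)$. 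The second formula is handled symmetrically and sends rotation by $c$ modulo $c+d$ to rotation by $c$ modulo $2c + d$, matching the pair $(c, c+d)$.

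The main obstacle — really the only one — is the index bookkeeping in this last step: one must keep straight the two block partitions in play (the symmetric $(c,d)$-partition used to state the recursion versus the anti-diagonal shape of the closed form) together with the wrap-around in the modular arithmetic. Phrasing the step as an identity of permutations, for which the sole relevant datum is whether $\sigma_{(c,d)}(m)$ lies in $\{1, \dots, c\}$ or in $\{c+1, \dots, c+d\}$, reduces the verification to a handful of residue computations and removes any need to manipulate the blocks $K_1, \dots, K_4$ explicitly.
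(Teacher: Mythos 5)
Your proposal is correct, and there is essentially nothing to compare it against: the paper dismisses this lemma with the single phrase ``Straightforward computation,'' so your write-up simply supplies the details the authors omitted. Your reduction of the closed form to a statement about permutation matrices --- identifying $\left(\begin{smallmatrix} 0 & I_c \\ I_d & 0\end{smallmatrix}\right)$ with the cyclic rotation $j \mapsto j+c \bmod n$ and then checking that the two blow-up rules of Definition~\ref{D:blowup} transform rotation by $c$ mod $c+d$ into rotation by $c+d$ mod $c+2d$ and rotation by $c$ mod $2c+d$ respectively --- is the natural way to organize the induction along the subtractive Euclidean algorithm, and the case analysis on whether $\sigma_{(c,d)}(m)$ lands in $\{1,\dots,c\}$ or $\{c+1,\dots,c+d\}$ does verify correctly in both branches (as does the treatment of the freshly inserted columns). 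The concluding block multiplications for $KJ=JK=I_n$ and for $JXK=\left(\begin{smallmatrix} D & C \\ B & A\end{smallmatrix}\right)$ are also right, including the point that the row and column partitions of $K_{(c,d)}$ are $(c,d)$ and $(d,c)$ respectively, which is the one place where careless bookkeeping could go wrong. No gaps.
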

\begin{proof}
Straightforward computation.
\end{proof}

\begin{proposition}\label{P:triples} Let $c, d \in \NN$ be mutually prime, $n = c + d$ and $\kF = \kO_{\PP^1}^{\oplus c} \oplus  \bigl(\kO_{\PP^1}(1)\bigr)^{\oplus d}$.
Then the following results are true:
\begin{enumerate}
\item The triple $\bigl(\kF, \CC^n, (I_n, K_{(c,d)})\bigr)$ corresponds to a simple vector bundle $\kP$ on $E$ of rank $n$ and degree $d$.
\item
Let $\mathrm{Ad}_{K_{(c,d)}}:  \lieg \lar \lieg$ be given by the formula $Y  \mapsto K_{(c,d)} \cdot Y \cdot  K_{(c,d)}^{-1}$.
Then the  triple $\bigl(\mathit{Ad}_{\PP^1}(\kF), \lieg, \bigl(\mathrm{Id}, \mathrm{Ad}_{K_{(c,d)}}\bigr)\bigr)$ corresponds to the sheaf of Lie algebras $\mathit{Ad}_E(\kP)$.
\item Let $E \stackrel{\sigma}\lar  E$ be the involution corresponding to the automorphism $$\PP^1 \lar \PP^1,\; (a: b) \mapsto (b: a).$$ Then the triple
$\bigl(\kF, \CC^n, (I_n, J_{(c,d)})\bigr)$ corresponds to the  simple vector bundle $\sigma^*(\kP)$.
\end{enumerate}
\end{proposition}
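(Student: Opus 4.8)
The plan is to exploit the fully faithful functor (\ref{E:equivcat}), whose essential image consists exactly of the triples with isomorphism gluing, and to read off each of the three assertions on the level of triples.

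For (1): since $I_n$ and $K_{(c,d)}$ are invertible, the triple $\bigl(\kF,\CC^n,(I_n,K_{(c,d)})\bigr)$ lies in the essential image and is therefore isomorphic to $\bigl(\nu^*\kP,\kP\big|_s,\Theta_\kP\bigr)$ for some vector bundle $\kP$ on $E$ with $\nu^*\kP\cong\kF$. In particular $\rk\kP=\rk\kF=n$, and since $\nu$ is birational, $\deg\kP=\deg\kF=d$. To see that $\kP$ is simple I would compute $\End_E(\kP)$ as the endomorphism algebra of its triple. A global endomorphism $\psi$ of $\kF=\kO_{\PP^1}^{\oplus c}\oplus\kO_{\PP^1}(1)^{\oplus d}$ is block lower triangular: using $\Hom_{\PP^1}(\kO_{\PP^1}(1),\kO_{\PP^1})=0$, $\Hom_{\PP^1}(\kO_{\PP^1},\kO_{\PP^1})=\Hom_{\PP^1}(\kO_{\PP^1}(1),\kO_{\PP^1}(1))=\CC$ and $\Hom_{\PP^1}(\kO_{\PP^1},\kO_{\PP^1}(1))=\langle z_0,z_1\rangle$, it has constant diagonal blocks $P\in\Mat_{c\times c}(\CC)$, $R\in\Mat_{d\times d}(\CC)$, zero upper-right block, and a lower-left block of linear forms. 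Restricting to $Z$ through $\xi_\kF$ (via the commutative square for a morphism $\kO_{\PP^1}(k)\to\kO_{\PP^1}(l)$) yields two matrices $\psi\big|_0$ and $\psi\big|_\infty$ which share the same $P,R$ and zero upper-right block and differ only in their lower-left blocks $Q_0,Q_\infty$. The compatibility with the gluing $(I_n,K_{(c,d)})$ then forces $f=\psi\big|_0$ and $\psi\big|_\infty=K_{(c,d)}\,\psi\big|_0\,K_{(c,d)}^{-1}$.

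By the preceding Lemma the matrix $K_{(c,d)}$ is the cyclic shift $e_a\mapsto e_{a+c}$ on $\CC^n=\CC^{\,\ZZ/n\ZZ}$, so conjugation by it shifts both indices by $c$, and the constraint becomes $(\psi\big|_\infty)_{a,b}=(\psi\big|_0)_{a-c,\,b-c}$. I would then analyse this entrywise along each diagonal $b-a=\delta$. On the main diagonal $\delta=0$ the diagonal entries are invariant under the shift by $c$; as $\gcd(c,n)=1$ this shift is an $n$-cycle, so all diagonal entries coincide, say equal $\lambda$. For $\delta\neq 0$ the vanishing of the upper-right block forces certain entries to be $0$, and the shift relation propagates this zero around the $n$-cycle; the lower-left block permits \emph{jumps}, but the forced zeros and the jumps interleave because $A=\{1,\dots,c\}$ is an interval and the orbit $\{kc\bmod n\}$ is balanced with respect to it. The upshot is that every off-diagonal entry vanishes, so $\psi=\lambda\,\id$ and $\End_E(\kP)=\CC$. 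This interleaving/balancedness step, which is exactly where $\gcd(c,d)=1$ enters, is the main obstacle; everything else is bookkeeping.

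Parts (2) and (3) are then formal consequences of the compatibility of the equivalence with the standard operations. For (2), $\mathit{Ad}_E(\kP)$ is by (\ref{E:DefAd}) the kernel of the trace on $\mathit{End}_E(\kP)$; under the equivalence $\mathit{End}_E(\kP)$ corresponds to the triple with underlying sheaf $\mathit{End}_{\PP^1}(\kF)$ and gluing obtained by conjugating that of $\kP$, i.e.\ $\bigl(\mathrm{Ad}_{I_n},\mathrm{Ad}_{K_{(c,d)}}\bigr)=\bigl(\mathrm{Id},\mathrm{Ad}_{K_{(c,d)}}\bigr)$. Since conjugation preserves the trace, passing to the trace-zero part yields exactly the triple $\bigl(\mathit{Ad}_{\PP^1}(\kF),\lieg,(\mathrm{Id},\mathrm{Ad}_{K_{(c,d)}})\bigr)$. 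For (3), the automorphism $\PP^1\to\PP^1$, $(a:b)\mapsto(b:a)$, interchanges $0$ and $\infty$ together with the two trivializations $\xi_k$, and it intertwines $\nu$ with $\sigma$; hence $\sigma^*\kP$ corresponds to the triple with underlying bundle $\cong\kF$ and the two gluing matrices swapped, namely $(K_{(c,d)},I_n)$. Finally, the pair $(\id_\kF,K_{(c,d)})$ is an isomorphism of triples from $\bigl(\kF,\CC^n,(K_{(c,d)},I_n)\bigr)$ to $\bigl(\kF,\CC^n,(I_n,K_{(c,d)}^{-1})\bigr)=\bigl(\kF,\CC^n,(I_n,J_{(c,d)})\bigr)$, which gives the claim.
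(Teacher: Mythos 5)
Parts (2) and (3) of your argument are correct and essentially coincide with the paper's: the paper quotes \cite[Proposition 6.3]{BH} for (2) and, for (3), uses exactly your isomorphism of triples $\bigl(\kF, \CC^n, (K_{(c,d)}, I_n)\bigr) \cong \bigl(\kF, \CC^n, (I_n, J_{(c,d)})\bigr)$.

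Part (1) is where the problem lies. The paper does not prove simplicity from scratch: it invokes \cite[Theorem 5.1.19]{BK4} and only verifies, by an explicit matrix identity, that the recursion of Definition \ref{D:blowup} produces a triple isomorphic to the one appearing in \cite[Algorithm 5.1.20]{BK4}. You instead attempt a direct computation of $\End_E(\kP)$. Your setup is correct: an endomorphism of the triple amounts to $\Psi \in \End_{\PP^1}(\kF)$ with $\psi\big|_\infty = K_{(c,d)}\, \psi\big|_0\, K_{(c,d)}^{-1}$, where $\psi\big|_0$ and $\psi\big|_\infty$ share their diagonal blocks, have vanishing upper--right block, and may differ only in the lower--left block; the reduction to an entrywise analysis along the orbits of $(a,b) \mapsto (a+c, b+c)$ is also the right idea, and the diagonal case ($\delta = 0$) is complete. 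But the off--diagonal case is exactly where the proof should happen, and there you only assert that ``the forced zeros and the jumps interleave'' and that ``every off--diagonal entry vanishes,'' explicitly calling this step ``the main obstacle.'' What has to be shown is that on each orbit with $b - a \equiv \delta \not\equiv 0 \pmod n$, between any two cyclically consecutive positions lying in the lower--left block (where the value of $\psi\big|_0$ may jump) there is a position forcing the value to zero, i.e.\ a position in the upper--right block or one whose shift by $(c,c)$ lands there. That the numbers of upper--right and lower--left positions agree on each orbit is easy; that they alternate in the required cyclic order is a genuine combinatorial statement about the interval $\{1, \dots, c\}$ and the rotation by $c$ on $\ZZ/n\ZZ$, it is precisely where $\gcd(c,d) = 1$ must enter, and it is nowhere proved. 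As written, the simplicity claim is therefore not established: either supply this interleaving lemma, or fall back on the citation of \cite{BK4} that the paper itself uses (in which case one must still reconcile the two recursive definitions of $K_{(c,d)}$, as the paper does).
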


\begin{proof} The first statement is essentially proven in \cite[Theorem 5.1.19]{BK4}. However,  one should mention that in \cite[Algorithm 5.1.20]{BK4}, a slightly different  rule for
the blow--ups from Definition \ref{D:blowup} was chosen. Namely, for a given $K_{(c,d)} = \left(\begin{array}{c|c} K_1 & K_2 \\ \hline K_3 & K_4\end{array}\right)$, it was put  $\widetilde{K}_{(c+d,d)} = K_{(c+d, d)}$, whereas
$$
\widetilde{K}_{(c, d+c)} :=
\left(
\begin{array}{c|cc}
0 & I_c & 0 \\
\hline
K_1 & 0 & K_2 \\
K_3 & 0 & K_4
\end{array}
\right).
$$
It follows from the matrix identity
$$
\left(
\begin{array}{ccc}
I_c & 0 & 0 \\
0 & 0 & I_d \\
0 & I_c & 0
\end{array}
\right)
\cdot
\widetilde{K}_{(c, d+c)}\cdot
\left(
\begin{array}{ccc}
I_c & 0 & 0 \\
0 & 0 & I_c \\
0 & I_d & 0
\end{array}
\right) = {K}_{(c, d+c)}
$$
that the triples $\bigl(\kF, \CC^n, (I_n, K_{(c,d)})\bigr)$ and $\bigl(\kF, \CC^n, (I_n, \widetilde{K}_{(c,d)})\bigr)$ are isomorphic, hence
define isomorphic simple bundle of rank $n$ and degree $d$ on the curve $E$.

\smallskip
\noindent
For the second statement, see for instance \cite[Proposition 6.3]{BH}. Finally, it follows from the definition of the equivalence of categories (\ref{E:equivcat}) and the made choices of  trivializations (\ref{E:trivialization})  that
the vector bundle $\sigma^*(\kP)$ is described by the triple $\bigl(\kF, \CC^n, (K_{(c,d)}, I_n)\bigr)$. It remains to observe that
$$
\bigl(\kF, \CC^n, (K_{(c,d)}, I_n)\bigr) \cong \bigl(\kF, \CC^n, (I_n, K_{(c,d)}^{-1})\bigr) = \bigl(\kF, \CC^n, (I_n, J_{(c,d)})\bigr)
$$
in the category $\mathsf{Comma}(E)$, implying  the third statement. \end{proof}

\begin{corollary}\label{C:SheafA}
Let $0 < d < n$ be mutually prime and $\kP$ be a simple vector bundle of rank $n$ and degree $d$ on a nodal Weierstra\ss{} cubic $E$ and $\kA = \mathit{Ad}_E(\kP)$. Then $\kA = \kA_{(n,d)}$ is described by the triple $\Bigl(\mathit{Ad}_{\PP^1}(\kF), \lieg, \bigl(\mathrm{Id}, \mathrm{Ad}_{J_{(c,d)}}\bigr)\Bigr)$,
where  $\kF = \kO_{\PP^1}^{\oplus c} \oplus  \bigl(\kO_{\PP^1}(1)\bigr)^{\oplus d}$ and $c = n-d$. Moreover, $H^0(E, \kA) = 0 = H^1(E, \kA)$.
\end{corollary}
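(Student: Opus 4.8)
The plan is to obtain both assertions from Proposition~\ref{P:triples}, the first by tracking the gluing matrix through the involution $\sigma$ and the second by a short cohomological computation on the genus--one curve $E$. For the description of $\kA$, I would start from Proposition~\ref{P:triples}(3): the bundle $\sigma^*(\kP)$ is represented by the triple $\bigl(\kF, \CC^n, (I_n, J_{(c,d)})\bigr)$. Feeding this triple into the same adjoint construction that yields Proposition~\ref{P:triples}(2) --- namely, a bundle described by $(I_n, M)$ has $\mathit{Ad}_E$ described by $\bigl(\mathit{Ad}_{\PP^1}(\kF), \lieg, (\mathrm{Id}, \mathrm{Ad}_M)\bigr)$ --- shows that $\mathit{Ad}_E\bigl(\sigma^*(\kP)\bigr)$ is described by $\bigl(\mathit{Ad}_{\PP^1}(\kF), \lieg, (\mathrm{Id}, \mathrm{Ad}_{J_{(c,d)}})\bigr)$. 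It then remains to identify this sheaf with $\kA$. Since $\sigma$ is an automorphism of $E$, the pullback $\sigma^*(\kP)$ is again a simple vector bundle of rank $n$ and degree $d$; by the invariance recalled in the Introduction --- $\mathit{Ad}_E$ is unchanged under tensoring $\kP$ by a line bundle from $\Pic^0(E)$, so that $\mathit{Ad}_E$ of any simple bundle of rank $n$ and degree $d$ equals $\kA_{(n,d)}$ --- we obtain $\mathit{Ad}_E\bigl(\sigma^*(\kP)\bigr) \cong \kA_{(n,d)} = \kA$. This produces precisely the normalization with $J_{(c,d)}$, which is the one compatible with the $\sharp$--operation~(\ref{E:Xsharp}) used in the sequel.

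For the vanishing $H^0(E,\kA) = 0 = H^1(E,\kA)$, I would split the defining sequence~(\ref{E:DefAd}). As $n$ is invertible in $\CC$, the morphism $\kO_E \to \mathit{End}_E(\kP)$, $f \mapsto \frac{1}{n} f \cdot \mathrm{id}_\kP$, is a section of the trace, so $\mathit{End}_E(\kP) \cong \kA \oplus \kO_E$ and hence $H^i\bigl(E, \mathit{End}_E(\kP)\bigr) \cong H^i(E, \kA) \oplus H^i(E, \kO_E)$ for $i = 0, 1$. Now $\kP$ is simple, whence $H^0\bigl(E, \mathit{End}_E(\kP)\bigr) = \End_E(\kP) = \CC$; and since $E$ has arithmetic genus one and $\mathit{End}_E(\kP) \cong \kP^\vee \otimes \kP$ has degree zero, Riemann--Roch gives $\chi\bigl(\mathit{End}_E(\kP)\bigr) = 0$, so that $H^1\bigl(E, \mathit{End}_E(\kP)\bigr) \cong \CC$ as well (equivalently, Serre duality together with $\omega_E \cong \kO_E$ and the self--duality of $\mathit{End}_E(\kP)$ afforded by the trace form). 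Since $H^0(E, \kO_E) \cong \CC \cong H^1(E, \kO_E)$, comparing dimensions in the two decompositions forces $H^0(E, \kA) = 0$ and $H^1(E, \kA) = 0$.

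The whole argument is carried by Proposition~\ref{P:triples} and by the triviality of the dualizing sheaf $\omega_E \cong \kO_E$; there is no genuine obstacle. The one step deserving a second look is the replacement of $K_{(c,d)}$ by $J_{(c,d)}$: this is not a formal identity of gluing data but rests on the intrinsic fact that $\kA_{(n,d)}$ depends only on the pair $(n,d)$, applied here to the bundle $\sigma^*(\kP)$. Once this is in place, the cohomological part is entirely routine.
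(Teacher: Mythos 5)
Your proposal is correct and follows essentially the same route as the paper: the identification of the triple comes from Proposition~\ref{P:triples}(3) combined with the fact that $\mathit{Ad}_E$ of a simple bundle of rank $n$ and degree $d$ is independent of the choice of that bundle (applied to $\sigma^*(\kP)$), and the vanishing of cohomology comes from the sequence~(\ref{E:DefAd}) together with the one--dimensionality of $H^i(E,\kO_E)$, $\End_E(\kP)$ and $\Ext^1_E(\kP,\kP)$. Your splitting of the trace sequence via $f\mapsto \frac{1}{n}f\cdot\mathrm{id}_\kP$ and the Riemann--Roch computation of $h^1\bigl(\mathit{End}_E(\kP)\bigr)$ is only a cosmetic repackaging of the paper's long--exact--sequence argument.
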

\begin{proof} Let $\kQ$ be another simple vector bundle on $E$ of rank $n$ and degree $d$. Then there exists a line bundle $\kL \in \Pic^0(E)$ such that
$\kQ \cong \kP \otimes \kL$; see for instance \cite{Burban1}. Therefore, we have isomorphism of sheaves of Lie algebras:
$$
\mathit{Ad}_E(\kQ) \cong \mathit{Ad}_E(\kP) \cong \mathit{Ad}_E\bigl(\sigma^*(\kP)\bigr).
$$
It follows from the third part of Proposition \ref{P:triples} that the sheaf of Lie algebras $\kA$ does not depend on the choice of a simple $\kP$ and is given by the triple $\Bigl(\mathit{Ad}_{\PP^1}(\kF), \lieg, \bigl(\mathrm{Id}, \mathrm{Ad}_{J_{(c,d)}}\bigr)\Bigr)$.
Consider the long cohomology sequence attached to the short exact sequence (\ref{E:DefAd}):
$$
0  \lar H^0(\kA) \lar \End_E(\kP) \stackrel{n\cdot\,}\lar H^0(\kO)   \lar H^1(\kA) \lar \Ext^1(\kP, \kP)  \lar H^1(\kO) \lar 0.
$$
Since all vector spaces $H^0(\kO), H^1(\kO), \End_E(\kP)$ and $\Ext^1(\kP, \kP)$ are one--dimensional, we get the vanishing $H^0(\kA) = 0 = H^1(\kA).$
\end{proof}

\section{On Lagrangian orders in $\lieg\llbrace z^{-1}\rrbrace \times \lieg$}

\noindent
Let $\lieg = \mathfrak{sl}_n(\CC)$ and $\lieD = \lieg\llbrace z^{-1}\rrbrace \times \lieg$. Then we have the following non--degenerate symmetric $\CC$--bilinear forms on the Lie algebras $\lieg$ and $\lieD$ respectively:
\begin{equation}\label{E:classdouble1}
\bigl(\lieg \times \lieg\bigr)\times \bigl(\lieg \times \lieg\bigr) \xrightarrow{\left\langle\,-\,,\,-\,\right\rangle} \CC, \quad
\bigl\langle(f_1, g_1), (f_2, g_2)\bigr\rangle = \tr(f_1 f_2) - \tr(g_1  g_2)
\end{equation}
and
\begin{equation}\label{E:mainform}
\lieD \times \lieD \xrightarrow{\left\langle\,-\,,\,-\,\right\rangle} \CC, \quad \bigl\langle(F_1, f_1), (F_2, f_2)\bigr\rangle =
\res_0 \left(\mathsf{Tr}(F_1 F_2) \frac{dz}{z}\right) - \tr(f_1 f_2),
\end{equation}
where $\liea\llbrace z^{-1}\rrbrace \stackrel{\mathsf{Tr}}\lar \CC\llbrace z^{-1}\rrbrace$ and  $\liea \stackrel{\mathsf{tr}}\lar \CC$  are  the trace maps.
Let $\lieP$ be the image of the injective morphism of Lie algebras $\lieg[z] \stackrel{\jmath}\lar  \lieD, \, F(z) \mapsto \bigl(F(z), F(0)\bigr).$
It is not difficult to see that $\lieP$ is a Lagrangian Lie subalgebra of $\lieD$ with respect to the form (\ref{E:mainform}), i.e.~$\lieP = \lieP^\perp$.

\smallskip
\noindent
According to  the work \cite{KPSST} (see also \cite{PopStolin} for further elaborations), the quasi--trigonometric solutions of (\ref{E:CYBE}) are parameterized by the following objects.
\begin{definition}\label{D:orders} A vector subspace $\lieW \subset \lieD$ is called \emph{Lagrangian order} transversal to $\lieP$ if the following conditions are satisfied.
\begin{enumerate}
\item $\lieW$ is a Lie subalgebra of $\lieD$, for which there exists $m \in \NN$ such that $$z^{-m} \lieg\llbracket z^{-1}\rrbracket \times \{0\} \subseteq \lieW.$$
\item $\lieW$ is a coisotropic subspace of $\lieD$ with respect to the form (\ref{E:mainform}), i.e.~for any $w_1, w_2 \in \lieW$ we have:
$\bigl\langle w_1, w_2\bigr\rangle = 0$.
\item We have a direct sum decomposition $\lieD = \lieP \dotplus \lieW$, i.e.~$\lieD = \lieP + \lieW$ and $\lieP \cap \lieW = 0$.
\end{enumerate}
\end{definition}

\begin{example}\label{Ex:BasicOrder}
Let $\lieg = \lien_- \oplus  \lieh \oplus  \lien_+$ be the standard triangular decomposition of $\lieg$ and
$\Delta_{\lieh} := \bigl\{(h , -h) \,\big|\, h \in \lieh \bigr\} \subset \lieg \times \lieg$. Then the vector space
\begin{equation}\label{E:firstorder}
\lieW_\circ := z^{-1} \lieg \llbracket z^{-1}\rrbracket \times \{0\} + \bigl(\lien_- \times \{0\} + \{0\} \times \lien_+ + \Delta_{\lieh}\bigr)
\end{equation}
is a Lagrangian order transversal to $\lieP$.
It what follows, we call
$
\lieD = \lieW_\circ \dotplus \lieP
$
the \emph{standard decomposition} of $\lieD$.
\end{example}

\begin{lemma}
Let $\lieW \subset \lieD$ be a vector subspace satisfying the second and the third properties of Definition \ref{D:orders}. Then we have:
$\lieW = \lieW^\perp$, i.e.~$\lieW$ is a Lagrangian subspace of $\lieD$.
\end{lemma}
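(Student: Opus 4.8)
The plan is to deduce the Lagrangian property from the two hypotheses by a purely formal argument about orthogonal complements, carefully avoiding any dimension count (which is unavailable since $\lieD$ is infinite--dimensional). Property~(2) of Definition~\ref{D:orders} is precisely the inclusion $\lieW \subseteq \lieW^\perp$, so the entire content of the lemma is the reverse inclusion $\lieW^\perp \subseteq \lieW$. The two extra inputs I would use are that $\lieP$ is Lagrangian (already recorded above, $\lieP = \lieP^\perp$) and that the form (\ref{E:mainform}) is non--degenerate, i.e.~$\lieD^\perp = 0$.

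First I would record the elementary identity $(\lieP + \lieW)^\perp = \lieP^\perp \cap \lieW^\perp$, which holds for any bilinear form because a vector annihilates $\lieP + \lieW$ if and only if it annihilates both $\lieP$ and $\lieW$. Feeding in property~(3), namely $\lieD = \lieP + \lieW$, together with non--degeneracy, the left--hand side equals $\lieD^\perp = 0$; using $\lieP^\perp = \lieP$ on the right, this yields the crucial relation $\lieP \cap \lieW^\perp = 0$.

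The reverse inclusion then follows from a short decomposition argument. Given $x \in \lieW^\perp$, I would use the direct sum $\lieD = \lieP \dotplus \lieW$ of property~(3) to write $x = p + w$ with $p \in \lieP$ and $w \in \lieW$. Since $w \in \lieW \subseteq \lieW^\perp$ by property~(2), the difference $p = x - w$ again lies in the subspace $\lieW^\perp$; hence $p \in \lieP \cap \lieW^\perp = 0$, so $x = w \in \lieW$. This proves $\lieW^\perp \subseteq \lieW$, and combined with the inclusion of property~(2) it gives $\lieW = \lieW^\perp$.

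I do not expect a genuine obstacle here: the argument uses no property of $\lieW$ beyond (2) and (3), and in particular never invokes finite--dimensionality. The only point requiring care is conceptual rather than technical, namely to resist arguing through a relation of the shape $\dim \lieW + \dim \lieW^\perp = \dim \lieD$, and instead to route everything through the two identities $(\lieP+\lieW)^\perp = \lieP^\perp \cap \lieW^\perp$ and $\lieD^\perp = 0$, both of which remain valid in the infinite--dimensional setting.
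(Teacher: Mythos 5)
Your argument is correct and is essentially the paper's own proof: both decompose an element of $\lieW^\perp$ as $p+w$ via $\lieD = \lieP \dotplus \lieW$, use $\lieW \subseteq \lieW^\perp$ to see that the $\lieP$--component is orthogonal to all of $\lieW$ and (by isotropy of $\lieP$) to all of $\lieP$, and conclude it vanishes by non--degeneracy. Your packaging of this last step as $(\lieP+\lieW)^\perp = \lieP^\perp \cap \lieW^\perp = \lieD^\perp = 0$ is just a tidier phrasing of the same elementwise computation in the paper.
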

\begin{proof}
By the second assumption of Definition \ref{D:orders}, we have the inclusion  $\lieW \subseteq \lieW^\perp$. Let $u \in \lieW^\perp$. Then there exist uniquely determined $v \in \lieW$ and $l \in \lieP$ such that $u = l + v$. For any $w \in \lieW$ we have:
$
0 = \langle u, w\rangle =  \langle l, w\rangle + \langle v, w\rangle = \langle l, w\rangle.
$
Since $\lieP$ is a coisotropic  subspace of $\lieD$, we have $\bigl\langle l, \,-\,\bigr\rangle\Big|_{\lieP} = 0$. From what was said above follows, that
$\bigl\langle l, \,-\,\bigr\rangle\Big|_{\lieW} = 0$, too. Since the form (\ref{E:mainform}) is non--degenerate, we conclude that $l = 0$, therefore
$u = v \in \lieW$. Hence, $\lieW^\perp \subseteq \lieW$, implying the result.
\end{proof}

\begin{definition}
For any $\sigma \in \Aut_{\CC[z]}\bigl(\lieg[z]\bigr)$,  let $\sigma_\circ  \in \Aut_\CC(\lieg)$ be the induced automorphism.
Then we have a $\CC$--linear automorphism
$$\lieD \stackrel{\widetilde\sigma}\lar \lieD, \; (F, f) \mapsto
\bigl(\sigma(F), \sigma_\circ(f)\bigr),$$ preserving the  bilinear form (\ref{E:mainform}). If $\lieW$ is a Lagrangian order transversal to $\lieP$ then
$\widetilde\sigma(\lieW)$ is also such an order. Conversely,
we call two Lagrangian orders $\lieW'$ and $\lieW''$  \emph{gauge equivalent} if there exists $\sigma \in \Aut_{\CC[z]}\bigl(\lieg[z]\bigr)$ such that $\lieW'' = \widetilde\sigma(\lieW')$.
\end{definition}

\noindent
The next natural question to clarify is the following: in what terms can one classify (up to the gauge equivalence) all Lagrangian orders $\lieW \subset \lieD$?

\begin{definition} For any $c, d \in \NN$ such that $c+d = n$, consider the matrix
$$
T_{(c, d)} = \left(
\begin{array}{c|c}
I_c & 0 \\
\hline
0 & z I_d
\end{array}
\right) \in \GL_n\bigl(\CC[z, z^{-1}]\bigr).
$$
Then we have the following $\CC$--linear Lie algebra automorphisms:
\begin{equation*}
\lieg\llbrace z^{-1}\rrbrace \xrightarrow{\mathrm{Ad}_{(c,d)}}\lieg\llbrace z^{-1}\rrbrace, \; F \mapsto T_{(c,d)} F T_{(c,d)}^{-1}
\quad
\mbox{\rm and}
\quad
\lieD \xrightarrow{\overline{\mathrm{Ad}}_{(c,d)}} \lieD, \; (F, f) \mapsto \bigl(\mathrm{Ad}_{(c,d)}(F), f\bigr).
\end{equation*}
Finally, we put:
$
\lieO_{(c,d)} := \mathrm{Ad}_{(c,d)}\bigl(\lieg\llbracket z^{-1}\rrbracket\bigr) \times \lieg.
$
\end{definition}

\begin{lemma}\label{L:basicsOrders} The following results are true.
\begin{enumerate}
\item The $\CC$--vector space $\lieO_{(c,d)}$ is a Lie subalgebra of $\lieD$.
\item The orthogonal complement  of $\lieO_{(c,d)}$ with respect to the bilinear form (\ref{E:mainform}) has the following description:
$$
\lieO_{(c,d)}^\perp = \mathrm{Ad}_{(c,d)}\bigl(z^{-1}\lieg\llbracket z^{-1}\rrbracket\bigr) \times \{0\} =
\left(z^{-2} \lieg\llbracket z^{-1}\rrbracket + z^{-1} \lieb_{(c,d)} + \lien_{(c,d)} \right) \times \{0\}.
$$
\item Moreover, $\lieO_{(c,d)}^\perp \subset \lieO_{(c,d)}$ is a Lie ideal and the linear map
$$
\lieO_{(c,d)}/\lieO_{(c,d)}^\perp \stackrel{\imath}\lar \lieg \times \lieg, \quad \overline{\bigl(\mathrm{Ad}_{(c,d)}(F), f\bigr)}\mapsto
\bigl(F(0), f\bigr)
$$
is an isomorphism of Lie algebras.
\item Finally, the bilinear form (\ref{E:mainform}) defines a non--degenerate bilinear form on the quotient $\lieO_{(c,d)}/\lieO_{(c,d)}^\perp$ and the morphism
$\imath$ is an isometry.
\end{enumerate}
\end{lemma}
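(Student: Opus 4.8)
The plan is to handle the four assertions in turn, using repeatedly that $\mathrm{Ad}_{(c,d)}$ is a Lie algebra automorphism of $\lieg\llbrace z^{-1}\rrbrace$ (it is conjugation by the invertible matrix $T_{(c,d)}$) and that the trace $\Tr$ is cyclic and conjugation--invariant. For part (1), since $\lieg\llbracket z^{-1}\rrbracket$ is a Lie subalgebra of $\lieg\llbrace z^{-1}\rrbrace$ and $\mathrm{Ad}_{(c,d)}$ is an automorphism, the image $\mathrm{Ad}_{(c,d)}\bigl(\lieg\llbracket z^{-1}\rrbracket\bigr)$ is again a subalgebra; taking its product with the subalgebra $\lieg$ in the second factor shows that $\lieO_{(c,d)}$ is a Lie subalgebra of $\lieD$.

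For the first equality of part (2), I would argue that $(G,g) \in \lieO_{(c,d)}^\perp$ forces $\tr(fg) = 0$ for all $f \in \lieg$, hence $g = 0$ by non--degeneracy of the trace form on $\lieg$, and that the remaining condition $\res_0\bigl(\Tr(\mathrm{Ad}_{(c,d)}(F)\,G)\tfrac{dz}{z}\bigr) = 0$ for all $F \in \lieg\llbracket z^{-1}\rrbracket$ can, by cyclicity of the trace, be rewritten as $\res_0\bigl(\Tr(F\,\mathrm{Ad}_{(c,d)}^{-1}(G))\tfrac{dz}{z}\bigr) = 0$. Since the $z^0$--coefficient of $\Tr(F\cdot H)$ pairs the coefficient of $z^{-i}$ in $F$ with that of $z^{i}$ in $H$ through the non--degenerate trace form on $\lieg$, the orthogonal complement of $\lieg\llbracket z^{-1}\rrbracket$ inside $\lieg\llbrace z^{-1}\rrbrace$ is exactly $z^{-1}\lieg\llbracket z^{-1}\rrbracket$; applying $\mathrm{Ad}_{(c,d)}$ then gives $\lieO_{(c,d)}^\perp = \mathrm{Ad}_{(c,d)}\bigl(z^{-1}\lieg\llbracket z^{-1}\rrbracket\bigr) \times \{0\}$.

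The second equality of part (2) is the computational core, and this is where I expect the only real work. Writing $F = \sum_{i\ge1} a_i z^{-i}$ with $a_i = \left(\begin{smallmatrix}A_i & B_i \\ C_i & D_i\end{smallmatrix}\right) \in \lieg$ and applying $\mathrm{Ad}_{(c,d)}$ blockwise, the diagonal blocks $A_i,D_i$ remain at order $z^{-i}$, the block $B_i$ is pushed to order $z^{-i-1}$, and the block $C_i$ to order $z^{-i+1}$. Reading off coefficients, the term of order $z^0$ is $\left(\begin{smallmatrix}0&0\\C_1&0\end{smallmatrix}\right) \in \lien_{(c,d)}$, that of order $z^{-1}$ is $\left(\begin{smallmatrix}A_1&0\\C_2&D_1\end{smallmatrix}\right) \in \lieb_{(c,d)}$, and for $k \ge 2$ the term of order $z^{-k}$ is $\left(\begin{smallmatrix}A_k&B_{k-1}\\C_{k+1}&D_k\end{smallmatrix}\right)$, which (subject only to the trace relation $\tr(A_k) + \tr(D_k) = 0$, the remaining blocks being free) ranges over all of $\lieg$. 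The hard part here is purely the bookkeeping needed to see that these coefficients can be prescribed independently: each free block $A_i,B_i,C_i,D_i$ appears in a distinct power of $z$, so the image is exactly $\lien_{(c,d)} + z^{-1}\lieb_{(c,d)} + z^{-2}\lieg\llbracket z^{-1}\rrbracket$, with injectivity automatic since $\mathrm{Ad}_{(c,d)}$ is an automorphism.

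For part (3), I note that $z^{-1}\lieg\llbracket z^{-1}\rrbracket$ is a Lie ideal of $\lieg\llbracket z^{-1}\rrbracket$, so its image under the automorphism $\mathrm{Ad}_{(c,d)}$ is an ideal of $\mathrm{Ad}_{(c,d)}\bigl(\lieg\llbracket z^{-1}\rrbracket\bigr)$, whence $\lieO_{(c,d)}^\perp$ is an ideal of $\lieO_{(c,d)}$. The map $\imath$ is well defined since every element of $\lieO_{(c,d)}^\perp$ has $F(0) = 0$ and $f = 0$; it is bijective with inverse $(g,f) \mapsto \overline{(\mathrm{Ad}_{(c,d)}(g),f)}$, and it is a homomorphism because $\mathrm{Ad}_{(c,d)}$ commutes with brackets and evaluation $F \mapsto F(0)$ is a Lie algebra homomorphism. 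Finally, for part (4), the radical of the restriction of the form (\ref{E:mainform}) to $\lieO_{(c,d)}$ is $\lieO_{(c,d)} \cap \lieO_{(c,d)}^\perp = \lieO_{(c,d)}^\perp$ (using $\lieO_{(c,d)}^\perp \subset \lieO_{(c,d)}$ from part (3)), so the induced form on the quotient is non--degenerate by the standard radical argument. The isometry statement then follows from the same cyclicity computation: for $F_1,F_2 \in \lieg\llbracket z^{-1}\rrbracket$ one finds $\bigl\langle(\mathrm{Ad}_{(c,d)}(F_1),f_1),(\mathrm{Ad}_{(c,d)}(F_2),f_2)\bigr\rangle = \Tr\bigl(F_1(0)F_2(0)\bigr) - \tr(f_1 f_2)$, which is precisely the value of the form (\ref{E:classdouble1}) on $\imath$ of the two classes.
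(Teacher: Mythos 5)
Your proof is correct and follows essentially the same route as the paper: both arguments reduce everything to the untwisted case $\lieg\llbracket z^{-1}\rrbracket \times \lieg$ via the fact that $\overline{\mathrm{Ad}}_{(c,d)}$ preserves the form (\ref{E:mainform}) — which is exactly your cyclicity-of-trace observation — together with the standard computation of the orthogonal complement and the radical argument for part (4). The only difference is one of detail: the paper declares the second equality in part (2) and the well-definedness claims to be straightforward, whereas you carry out the blockwise bookkeeping for $\mathrm{Ad}_{(c,d)}$ explicitly, and that computation is correct.
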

\begin{proof}
Let $\lieO = \lieg\llbracket z^{-1}\rrbracket \times \{0\}$. It is clear that $\lieO$ is a Lie subalgebra of $\lieD$. Moreover,
$\lieO = z^{-1} \lieg\llbracket z^{-1}\rrbracket \times \{0\}$ and  $\lieO^\perp \subset \lieO$ is a Lie ideal. Since the automorphism $\overline{\mathrm{Ad}}_{(c,d)}$ preserves the form (\ref{E:mainform}), the first three statement follow. It is also clear that
the bilinear form on $\lieO_{(c,d)}/\lieO_{(c,d)}^\perp$ given by the formula
$$
\bigl\langle \overline{G}_1, \overline{G}_2\bigr\rangle := \bigl\langle {G}_1, {G}_2\bigr\rangle\quad \mbox{\rm for any}\;
G_1, G_2 \in \lieO_{(c,d)}
$$
is well--defined. Let $G_i = \bigl(\mathrm{Ad}_{(c,d)}(F_i), f_i\bigr)$ for some $F_i \in \lieg\llbracket z^{-1}\rrbracket$ and
$f_i \in \lieg$, where $i = 1,2$. Then we have:
$$
\bigl\langle {G}_1, {G}_2\bigr\rangle = \res_0 \left(\mathsf{Tr}\bigl(\mathrm{Ad}_{(c,d)}(F_1 F_2)\bigr) \frac{dz}{z}\right) - \tr(f_1 f_2) = \tr\bigl(F_1(0) F_2(0)\bigr) - \tr(f_1 f_2),
$$
implying the fourth statement.
\end{proof}

\begin{definition} Consider the following vector space:
\begin{equation*}
\nabla_{(c,d)} :=  \left\{
\left(
\begin{array}{c|c}
A & 0 \\
\hline
C' & D
\end{array}
\right),
\left(
\begin{array}{c|c}
A & 0 \\
\hline
C'' & D
\end{array}
\right)\; \left| \;
\left(
\begin{array}{c|c}
A & 0 \\
\hline
C' & D
\end{array}
\right),
\left(
\begin{array}{c|c}
A & 0 \\
\hline
C'' & D
\end{array}
\right) \in \lieb_{(c,d)}
\right.
\right\} \subseteq \lieg \times \lieg,
\end{equation*}
where $\lieb_{(c,d)} \subset \lieg$ is the Lie algebra defined by (\ref{E:solieb}).
\end{definition}

\begin{lemma}\label{L:AlgebraNabla} The following results are true.
\begin{enumerate}
\item The vector space $\nabla_{(c,d)}$ is a Lagrangian Lie subalgebra of $\lieg \times \lieg$ with respect to the bilinear form (\ref{E:classdouble1}).
\item Consider the Lie algebra  $\lieP_{(c,d)}:= \lieP \cap \lieO_{(c,d)}$. Then we have:
$$
\lieP_{(c,d)} = \left\{(X, X) \big| X \in \lieb_{(c,d)}\right\} + z \lien_{(c,d)} \times \{0\}.
$$
\item Finally, the image of $\lieP_{(c,d)}$ under the homomorphism of Lie algebras
$$
\lieP_{(c,d)} \lar \lieP_{(c,d)}/\bigl(\lieP_{(c,d)}\cap \lieO_{(c,d)}\bigr) \stackrel{\imath}\lar \lieg \times \lieg
$$
is the Lie algebra $\nabla_{(c,d)}$.
\end{enumerate}
\end{lemma}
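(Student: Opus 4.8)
The plan is to dispatch the three assertions in order: (1) is a finite-dimensional statement inside $\lieg \times \lieg$, while (2) and (3) amount to an explicit intersection computation in $\lieD$ followed by an application of the isomorphism $\imath$ from Lemma \ref{L:basicsOrders}.

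For (1), I would first identify $\nabla_{(c,d)}$ with the fibre product $\lieb_{(c,d)} \times_{\liel} \lieb_{(c,d)}$, where $\liel := \lieb_{(c,d)}/\lien_{(c,d)}$ is the block-diagonal Levi quotient: a pair $(X, Y) \in \lieb_{(c,d)} \times \lieb_{(c,d)}$ lies in $\nabla_{(c,d)}$ exactly when $X - Y \in \lien_{(c,d)}$, i.e.\ when $X$ and $Y$ share their block-diagonal parts. Since $\lien_{(c,d)}$ is a Lie ideal of $\lieb_{(c,d)}$, the identity $[X_1, X_2] - [Y_1, Y_2] = [X_1, X_2 - Y_2] + [X_1 - Y_1, Y_2]$ shows that this condition is preserved by the bracket, so $\nabla_{(c,d)}$ is a Lie subalgebra of $\lieg \times \lieg$. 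For the Lagrangian property the decisive elementary observation is that the trace of a product of two block lower-triangular matrices depends only on their diagonal blocks: if $X_i, Y_i \in \lieb_{(c,d)}$ share diagonal blocks $(A_i, D_i)$, then $\tr(X_1 X_2) = \tr(A_1 A_2) + \tr(D_1 D_2) = \tr(Y_1 Y_2)$, whence the form (\ref{E:classdouble1}) vanishes identically on $\nabla_{(c,d)}$, so $\nabla_{(c,d)}$ is isotropic. A dimension count $\dim \nabla_{(c,d)} = 2 \dim \lieb_{(c,d)} - \dim \liel = n^2 - 1 = \tfrac{1}{2}\dim(\lieg \times \lieg)$ then promotes isotropic to Lagrangian.

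For (2), note first that $\lieP_{(c,d)} = \lieP \cap \lieO_{(c,d)}$ is automatically a Lie subalgebra, being the intersection of two subalgebras. I would compute the intersection block by block. An element of $\lieP$ has the form $(F(z), F(0))$ with $F \in \lieg[z]$, and it lies in $\lieO_{(c,d)}$ precisely when $F(z) \in \mathrm{Ad}_{(c,d)}(\lieg\llbracket z^{-1}\rrbracket)$. Using $\mathrm{Ad}_{(c,d)}\!\left(\begin{smallmatrix} a & b \\ c & d \end{smallmatrix}\right) = \left(\begin{smallmatrix} a & z^{-1} b \\ z c & d \end{smallmatrix}\right)$ and comparing, block by block, the admissible powers of $z$ in $\mathrm{Ad}_{(c,d)}(\lieg\llbracket z^{-1}\rrbracket)$ against the constraint $F \in \lieg[z]$, one finds that the diagonal blocks $A, D$ and the upper block $B$ of $F$ must be constant with $B = 0$, whereas the lower block may be affine in $z$. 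This yields exactly $F(z) = X + z N$ with $X \in \lieb_{(c,d)}$ and $N \in \lien_{(c,d)}$, so that $\lieP_{(c,d)} = \{(X, X) \mid X \in \lieb_{(c,d)}\} + z \lien_{(c,d)} \times \{0\}$, as claimed.

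For (3), the natural map is the composite $\lieP_{(c,d)} \hookrightarrow \lieO_{(c,d)} \twoheadrightarrow \lieO_{(c,d)}/\lieO_{(c,d)}^\perp \xrightarrow{\imath} \lieg \times \lieg$; I would write a general element as $(\mathrm{Ad}_{(c,d)}(G), X)$ and read off its image $(G(0), X)$, where $G(0)$ is the constant term (the value at $z = \infty$) of $G \in \lieg\llbracket z^{-1}\rrbracket$. From the explicit $G$ produced in (2), whose lower block equals $C_1 + C_0 z^{-1}$, the constant term $G(0)$ has the same diagonal blocks as $X$ but carries $C_1$ in its lower-left corner, so $\imath$ sends $(X + zN, X)$ to a pair in $\lieb_{(c,d)} \times \lieb_{(c,d)}$ with common diagonal blocks and lower blocks $C_1$ and $C_0$. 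As $X$ runs over $\lieb_{(c,d)}$ and $N$ over $\lien_{(c,d)}$ these pairs exhaust precisely $\nabla_{(c,d)}$, and since $\lieP_{(c,d)} \cap \lieO_{(c,d)}^\perp = 0$ the map is in fact injective onto $\nabla_{(c,d)}$. I expect the bookkeeping in (3) to be the main obstacle: one must interpret $F(0)$ correctly as the value at $z = \infty$ of a series in $z^{-1}$ rather than at $z = 0$, keep straight which block acquires the factor $z^{\pm 1}$ under $\mathrm{Ad}_{(c,d)}$, and verify that the composite genuinely factors through $\imath$.
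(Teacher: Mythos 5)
Your proposal is correct and follows essentially the same route as the paper, which merely asserts that isotropy of $\nabla_{(c,d)}$ ``follows from the definition'' plus a dimension count, calls part (2) ``straightforward'', and writes out the quotient explicitly for part (3); you supply exactly the omitted details (the block--triangular trace identity, the block--by--block analysis of $\lieg[z]\cap \mathrm{Ad}_{(c,d)}(\lieg\llbracket z^{-1}\rrbracket)$, and the identification of $G(0)$ with the constant term of the series in $z^{-1}$). You also correctly read the quotient in part (3) as being by $\lieP_{(c,d)}\cap\lieO_{(c,d)}^{\perp}$ rather than by $\lieP_{(c,d)}\cap\lieO_{(c,d)}=\lieP_{(c,d)}$ as the statement literally says, which is the intended reading (compare the proof of Proposition \ref{P:reduction}).
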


\begin{proof}
It follows  from the definition that $\nabla_{(c,d)}$ is a coisotropic subspace of $\lieg \times \lieg$. Since
$\dim_{\CC}\bigl(\nabla_{(c,d)}\bigr) = \dim_{\CC}(\lieg)$, we get: $\nabla_{(c,d)} = \nabla_{(c,d)}^\perp$. The proof of the second statement is straightforward. From this  description of $\lieP_{(c,d)}$ it follows  that $\lieP_{(c,d)}/\bigl(\lieP_{(c,d)}\cap \lieO_{(c,d)}\bigr) = $
$$
\left\{
\overline{\left(
\mathrm{Ad}_{(c,d)}\left(
\begin{array}{c|c}
A & 0 \\
\hline
C' & D
\end{array}
\right),
\left(
\begin{array}{c|c}
A & 0 \\
\hline
C'' & D
\end{array}
\right)
\right)}\; \left| \;
\left(
\begin{array}{c|c}
A & 0 \\
\hline
C' & D
\end{array}
\right),
\left(
\begin{array}{c|c}
A & 0 \\
\hline
C'' & D
\end{array}
\right) \in \lieb_{(c,d)}
\right.
\right\},
$$
implying the last statement.
\end{proof}

\noindent
The following result  plays the main role in the classification of Lagrangian orders; see  \cite[Theorem 6]{KPSST} and the references therein.
\begin{theorem}\label{T:classifOrders}
Let $\lieW \subset \lieD$ be a Lagrangian order transversal to $\lieP$.  Then there exists (in general, non--unique) automorphism $\sigma \in \Aut_{\CC[z]}\bigl(\lieg[z]\bigr)$ as well as
$c,d \in \NN_0$ satisfying  $c+d = n$,   such that
 $\widetilde\sigma\bigl(\lieW\bigr) \subseteq \lieO_{(c,d)}$.
\end{theorem}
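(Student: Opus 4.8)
The plan is to reduce the statement to a question about the single subalgebra $\pi_1(\lieW) \subset \lieg\llbrace z^{-1}\rrbrace$, where $\pi_1 \colon \lieD \to \lieg\llbrace z^{-1}\rrbrace$ is the projection onto the first factor, and then to combine the theory of orders in $\Mat_n\bigl(\CC\llbrace z^{-1}\rrbrace\bigr)$ with a Birkhoff--Grothendieck factorization. Put $W_1 := \pi_1(\lieW)$. Since $\lieW$ is a Lie subalgebra, so is $W_1$; since $z^{-m}\lieg\llbracket z^{-1}\rrbracket \times \{0\} \subseteq \lieW$, we get $z^{-m}\lieg\llbracket z^{-1}\rrbracket \subseteq W_1$; and projecting the transversality relation $\lieD = \lieP \dotplus \lieW$ gives $W_1 + \lieg[z] = \lieg\llbrace z^{-1}\rrbrace$. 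Because the second component of $\lieO_{(c,d)}$ equals all of $\lieg$, the desired inclusion $\widetilde\sigma(\lieW) \subseteq \lieO_{(c,d)}$ is equivalent to $\sigma(W_1) \subseteq \mathrm{Ad}_{(c,d)}\bigl(\lieg\llbracket z^{-1}\rrbracket\bigr)$, where $\sigma$ is extended to $\lieg\llbrace z^{-1}\rrbrace$ by the same conjugation. Thus it suffices to find $g \in \GL_n(\CC[z])$ with $\det g \in \CC^\ast$ and some $(c,d)$ with $\mathrm{Ad}_{g^{-1}}(W_1) \subseteq \mathrm{Ad}_{(c,d)}\bigl(\lieg\llbracket z^{-1}\rrbracket\bigr)$.

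First I would use the Lagrangian property $\lieW = \lieW^\perp$ to bound $W_1$ from above. A direct computation of the orthogonal complement of $z^{-m}\lieg\llbracket z^{-1}\rrbracket \times \{0\}$ with respect to the form (\ref{E:mainform}) gives $z^{m-1}\lieg\llbracket z^{-1}\rrbracket \times \lieg$; hence $\lieW \subseteq z^{m-1}\lieg\llbracket z^{-1}\rrbracket \times \lieg$, so that $z^{-m}\lieg\llbracket z^{-1}\rrbracket \subseteq W_1 \subseteq z^{m-1}\lieg\llbracket z^{-1}\rrbracket$. In particular $W_1$ is a bounded $\CC$--subspace commensurable with the standard lattice $\lieg\llbracket z^{-1}\rrbracket$.

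The technical heart is to produce a maximal order of $\liea\llbrace z^{-1}\rrbrace = \Mat_n\bigl(\CC\llbrace z^{-1}\rrbrace\bigr)$ containing $W_1$, equivalently a $\CC\llbracket z^{-1}\rrbracket$--lattice $\Lambda$ with $W_1 \cdot \Lambda \subseteq \Lambda$. Here I would consider the $\CC\llbracket z^{-1}\rrbracket$--subalgebra $B$ of $\liea\llbrace z^{-1}\rrbrace$ generated by $W_1$ and the identity, and argue that $B$ is again a full $\CC\llbracket z^{-1}\rrbracket$--lattice. I expect this to be the main obstacle: a priori the associative powers $W_1^k$ need not remain bounded, so the Lie--subalgebra condition, the commensurability just established, and the Lagrangian property — which confines the associative closure between two commensurable lattices — must be used together to force integrality. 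Granting that $B$ is an order, it lies in a maximal order $\fR = \mathrm{End}_{\CC\llbracket z^{-1}\rrbracket}(\Lambda)$, and by the classification of maximal orders in a matrix algebra over the discrete valuation ring $\CC\llbracket z^{-1}\rrbracket$ one has $\fR = \mathrm{Ad}_h\bigl(\liea\llbracket z^{-1}\rrbracket\bigr)$ for some $h \in \GL_n\bigl(\CC\llbrace z^{-1}\rrbrace\bigr)$, with $W_1 \subseteq \fR \cap \lieg\llbrace z^{-1}\rrbrace = \mathrm{Ad}_h\bigl(\lieg\llbracket z^{-1}\rrbracket\bigr)$.

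Finally I would normalize $h$. Birkhoff--Grothendieck factorization lets me write $h = g \cdot \mathrm{diag}(z^{a_1}, \dots, z^{a_n}) \cdot h_0$ with $g \in \GL_n(\CC[z])$ (and $\det g \in \CC^\ast$), $h_0 \in \GL_n\bigl(\CC\llbracket z^{-1}\rrbracket\bigr)$ and integers $a_1 \ge \dots \ge a_n$; as $h_0$ normalizes $\liea\llbracket z^{-1}\rrbracket$, the order $\fR$ depends only on $g$ and the $a_i$. Conjugating by $\sigma_0 := \mathrm{Ad}_{g^{-1}} \in \Aut_{\CC[z]}\bigl(\lieg[z]\bigr)$, which preserves $\lieg[z]$ since $g^{\pm 1}$ are polynomial, replaces $W_1$ by $\sigma_0(W_1) \subseteq \mathrm{Ad}_{\mathrm{diag}(z^{a_i})}\lieg\llbracket z^{-1}\rrbracket$ and keeps the transversality $\sigma_0(W_1) + \lieg[z] = \lieg\llbrace z^{-1}\rrbrace$. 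Reading the latter equality entrywise, the $(i,j)$--entry forces $z^{a_i - a_j}\CC\llbracket z^{-1}\rrbracket + \CC[z] = \CC\llbrace z^{-1}\rrbrace$, i.e.\ $a_i - a_j \ge -1$; by symmetry $|a_i - a_j| \le 1$, so all $a_i$ lie in $\{t, t+1\}$ for a single integer $t$. Subtracting the central shift $z^t$ (trivial under $\mathrm{Ad}$) and applying a constant permutation matrix $P \in \GL_n(\CC) \subset \GL_n(\CC[z])$ sorting the diagonal, we reach $T_{(c,d)}$ with $c = \#\{i : a_i = t\}$ and $d = \#\{i : a_i = t+1\}$. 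Setting $\sigma := \mathrm{Ad}_{P^{-1}} \circ \sigma_0$ then yields $\sigma(W_1) \subseteq \mathrm{Ad}_{(c,d)}\lieg\llbracket z^{-1}\rrbracket$, hence $\widetilde\sigma(\lieW) \subseteq \lieO_{(c,d)}$, which is the assertion.
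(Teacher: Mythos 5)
The paper offers no proof of this theorem: it is imported verbatim from \cite[Theorem 6]{KPSST}, so your attempt can only be measured against the argument in that reference (which ultimately rests on Stolin's theory of maximal orders in loop algebras \cite{Stolin2}). Your road map is the right one and is essentially the one used there: reduce everything to the first component $W_1 = \pi_1(\lieW)$, trap $W_1$ between two commensurable lattices via $z^{-m}\lieg\llbracket z^{-1}\rrbracket \times \{0\} \subseteq \lieW = \lieW^{\perp}$, place $W_1$ inside a maximal order $\mathrm{Ad}_h\bigl(\lieg\llbracket z^{-1}\rrbracket\bigr)$, and normalize $h$ by Birkhoff--Grothendieck factorization, with transversality to $\lieP$ forcing the exponents $a_i$ to take at most two consecutive values. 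The opening reduction and the closing normalization are correct as written; in particular the computation $\bigl(z^{-m}\lieg\llbracket z^{-1}\rrbracket\times\{0\}\bigr)^{\perp} = z^{m-1}\lieg\llbracket z^{-1}\rrbracket \times \lieg$ and the entrywise argument giving $|a_i - a_j| \le 1$ both check out.

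The gap is exactly where you flag it, and it is not a minor one: the claim that the associative $\CC\llbracket z^{-1}\rrbracket$-algebra $B$ generated by $W_1$ is bounded is \emph{equivalent} to the statement you are trying to establish. Indeed, $W_1 \subseteq \mathrm{Ad}_h\bigl(\lieg\llbracket z^{-1}\rrbracket\bigr)$ for some $h$ forces the associative closure to lie in $\mathrm{Ad}_h\bigl(\Mat_n(\CC\llbracket z^{-1}\rrbracket)\bigr)$, and conversely boundedness of $B$ gives the containment via the classification of maximal orders over a discrete valuation ring; so ``granting that $B$ is an order'' grants the entire non-trivial content of the theorem, and everything after that point is (correct) bookkeeping. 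This boundedness is a genuine theorem of Stolin, and its proof needs an actual idea --- e.g.\ producing a lattice $\Lambda \subset \CC((z^{-1}))^n$ with $W_1 \cdot \Lambda \subseteq \Lambda$, or an inductive bound on the filtration by $k$-fold products exploiting both $[W_1, W_1] \subseteq W_1$ and the fact that $\mathfrak{sl}_n$ generates $\Mat_n$ associatively in two steps. None of the three ingredients you list (Lie closure, commensurability, Lagrangianity) yields it separately, and you do not say how to combine them. As it stands the proposal is an accurate skeleton rather than a proof; to complete it you must either prove this boundedness or cite the maximal-order theorem of \cite{Stolin2} (or \cite[Theorem 6]{KPSST}) at precisely that step.
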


\begin{remark}
If $\lieW$ is a Lagrangian order such that $\lieW \subseteq \lieO_{(c,d)}$ then we have  inclusions:
\begin{equation}
\lieO_{(c,d)}^\perp \subseteq \lieW^\perp = \lieW \subseteq \lieO_{(c,d)}.
\end{equation}
\end{remark}

\begin{proposition}\label{P:reduction} There exists a bijection between the following two sets:
\begin{enumerate}
\item Lagrangian orders $\lieW \subset \lieO_{(c,d)}$ transversal to $\lieP$.
\item Lagrangian subalgebras $\liew \subset \lieg \times \lieg$ such that $\lieg \times \lieg = \liew \dotplus \nabla_{(c,d)}$.
\end{enumerate}
\end{proposition}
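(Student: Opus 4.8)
The plan is to realize both sides of the asserted bijection through the canonical projection
$\pi\colon \lieO_{(c,d)} \twoheadrightarrow \lieO_{(c,d)}/\lieO_{(c,d)}^\perp$ followed by the isomorphism $\imath$ of Lemma \ref{L:basicsOrders}, which identifies $\lieO_{(c,d)}/\lieO_{(c,d)}^\perp$ with $\lieg \times \lieg$ as a Lie algebra equipped with the form (\ref{E:classdouble1}). By the Remark following Theorem \ref{T:classifOrders}, any Lagrangian order $\lieW$ contained in $\lieO_{(c,d)}$ automatically satisfies $\lieO_{(c,d)}^\perp \subseteq \lieW$, so $\lieW = \pi^{-1}\bigl(\pi(\lieW)\bigr)$. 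Hence the assignments $\lieW \mapsto \liew := \imath\bigl(\pi(\lieW)\bigr)$ and $\liew \mapsto \lieW := \pi^{-1}\bigl(\imath^{-1}(\liew)\bigr)$ are mutually inverse bijections between subspaces of $\lieO_{(c,d)}$ containing $\lieO_{(c,d)}^\perp$ and subspaces of $\lieg \times \lieg$, by the usual correspondence theorem. It then remains to match the three conditions of Definition \ref{D:orders} with those defining set (2).

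First I would translate the algebraic conditions. Since $\lieO_{(c,d)}^\perp$ is a Lie ideal of $\lieO_{(c,d)}$ and $\imath$ is a Lie algebra isomorphism (Lemma \ref{L:basicsOrders}(3)), a subspace $\lieW \supseteq \lieO_{(c,d)}^\perp$ is a Lie subalgebra of $\lieD$ if and only if $\liew$ is a Lie subalgebra of $\lieg \times \lieg$. Because $\imath$ is an isometry (Lemma \ref{L:basicsOrders}(4)) and for $w_1, w_2 \in \lieO_{(c,d)}$ the value $\langle w_1, w_2\rangle$ depends only on the classes $\pi(w_1), \pi(w_2)$, coisotropy of $\lieW$ in the sense of Definition \ref{D:orders}(2) is equivalent to coisotropy of $\liew$. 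Once transversality is known, the lemma preceding Proposition \ref{P:reduction} upgrades this to $\lieW = \lieW^\perp$, while on the finite-dimensional side a dimension count ($\dim \liew = \dim \lieg = \dim \liew^\perp$) upgrades it to $\liew = \liew^\perp$, so the two notions of Lagrangian subalgebra correspond. Finally, condition (1) of Definition \ref{D:orders} holds automatically for every $\lieW$ arising in the correspondence: the explicit description in Lemma \ref{L:basicsOrders}(2) gives $z^{-2}\lieg\llbracket z^{-1}\rrbracket \times \{0\} \subseteq \lieO_{(c,d)}^\perp \subseteq \lieW$, so one may take $m = 2$.

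The remaining and most delicate point is the equivalence $\lieD = \lieP \dotplus \lieW \iff \lieg \times \lieg = \liew \dotplus \nabla_{(c,d)}$. Here I would invoke Lemma \ref{L:AlgebraNabla}, which gives $\lieP_{(c,d)} = \lieP \cap \lieO_{(c,d)}$ and $\imath\bigl(\pi(\lieP_{(c,d)})\bigr) = \nabla_{(c,d)}$. Two facts drive the argument. First, $\pi$ restricts to an \emph{isomorphism} $\lieP_{(c,d)} \to \imath^{-1}(\nabla_{(c,d)})$, because $\lieP \cap \lieO_{(c,d)}^\perp = 0$: an element $(F(z), F(0)) \in \lieP$ lying in $\lieO_{(c,d)}^\perp = \mathrm{Ad}_{(c,d)}\bigl(z^{-1}\lieg\llbracket z^{-1}\rrbracket\bigr) \times \{0\} = \bigl(z^{-2}\lieg\llbracket z^{-1}\rrbracket + z^{-1}\lieb_{(c,d)} + \lien_{(c,d)}\bigr)\times\{0\}$ must be a polynomial lying in that series, forcing $F$ to be a constant in $\lien_{(c,d)}$ with $F(0) = 0$, whence $F = 0$. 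Second, the block description of $\mathrm{Ad}_{(c,d)}$ shows $z^{-1}\lieg\llbracket z^{-1}\rrbracket \subseteq \mathrm{Ad}_{(c,d)}\bigl(\lieg\llbracket z^{-1}\rrbracket\bigr)$, so splitting $G \in \lieg\llbrace z^{-1}\rrbrace$ into its polynomial and strictly-negative parts yields $\lieD = \lieP + \lieO_{(c,d)}$. Granting these, the intersection statement follows since $\lieP \cap \lieW = \lieP_{(c,d)} \cap \lieW$ maps isomorphically under $\pi$ onto $\nabla_{(c,d)} \cap \liew$, and the sum statement follows by first writing any element of $\lieD$ in $\lieP + \lieO_{(c,d)}$ and then decomposing its $\lieO_{(c,d)}$-component modulo $\lieO_{(c,d)}^\perp$ via $\lieg \times \lieg = \nabla_{(c,d)} + \liew$; the converse direction runs the same reductions backwards.

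The main obstacle is exactly this pair of facts, $\lieP \cap \lieO_{(c,d)}^\perp = 0$ and $\lieD = \lieP + \lieO_{(c,d)}$, which encode the compatibility of the Lagrangian subalgebra $\lieP = \jmath(\lieg[z])$ with the filtration $\lieO_{(c,d)}^\perp \subseteq \lieO_{(c,d)}$ and depend on the explicit block form of $\mathrm{Ad}_{(c,d)}$; the rest of the proof is formal correspondence-theorem and isometry bookkeeping.
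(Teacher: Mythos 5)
Your argument is correct and follows essentially the same route as the paper: both pass through the quotient $\lieO_{(c,d)}/\lieO_{(c,d)}^\perp \cong \lieg \times \lieg$ via the isometric isomorphism $\imath$ of Lemma \ref{L:basicsOrders}, identify the image of $\lieP_{(c,d)}$ with $\nabla_{(c,d)}$ via Lemma \ref{L:AlgebraNabla}, and rest on the two facts $\lieD = \lieP + \lieO_{(c,d)}$ and $\lieP \cap \lieO_{(c,d)}^\perp = 0$. Your write-up merely makes explicit several steps the paper asserts without proof (the verification of these two facts and the transfer of the Lie-subalgebra and coisotropy conditions), which is entirely consistent with the paper's argument.
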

\begin{proof}
For any Lagrangian order $\lieW \subset \lieO_{(c,d)}$ we put: $\liew := \imath\bigl(\lieW/\lieO_{(c,d)}^\perp\bigr) \subset \lieg \times \lieg$.
Since $\lieD = \lieW \dotplus \lieP$, we get:
$
\lieO_{(c,d)} = \lieW \dotplus \lieP_{(c,d)}.
$
As a consequence, we obtain:
\begin{equation}\label{E:splitting}
\lieO_{(c,d)}/\lieO_{(c,d)}^\perp = \lieW/\lieO_{(c,d)}^\perp \dotplus \lieP_{(c,d)}/\bigl(\lieP_{(c,d)}\cap \lieO_{(c,d)}^\perp\bigr).
\end{equation}
Applying to both sides of (\ref{E:splitting}) the isomorphism $\imath$ from Lemma \ref{L:basicsOrders} and taking into account   Lemma \ref{L:AlgebraNabla}, we get a direct sum decomposition
$\lieg \times \lieg = \liew \dotplus \nabla_{(c,d)}$.
It follows from Lemma \ref{L:basicsOrders} that $\liew$ is a coisotropic subspace of $\lieg \times \lieg$. By the dimension reasons, we have:
$\liew = \liew^\perp$.

\smallskip
\noindent
Conversely, let $\lieg \times \lieg = \liew \dotplus \nabla_{(c,d)}$ be a Lagrangian decomposition. Then we put:
$$
\lieW := \imath^{-1}(\liew) = \lieO_{(c,d)}^\perp \dotplus \left\{\left(\mathrm{Ad}_{(c,d)}(f_1), f_2\right)\Big| (f_1, f_2) \in \liew \right\}.
$$
It follows that $\lieO_{(c,d)} = \lieW \dotplus \lieP_{(c,d)}$ and $\lieW$ is a coisotropic subspace of $\lieO_{(c,d)}$ (hence, of $\lieD$). Since
$\lieO_{(c,d)} + \lieP = \lieD$ and $\lieP \cap \lieO_{(c,d)}^\perp = 0$, we get a direct sum decomposition
$\lieD = \lieW \dotplus \lieP$, satisfying all assumptions of Definition \ref{D:orders}.
\end{proof}

\begin{corollary} Proposition \ref{P:reduction} implies that the classification of Lagrangian orders in the sense of Definition \ref{D:orders} (and as a consequence, the classification of
  quasi--trigonometric  solutions of
CYBE for the Lie algebra $\lieg$) reduces to a finite dimensional problem of the description  of Lagrangian decompositions $\lieg \times \lieg = \liew \dotplus \nabla_{(c,d)}$ for all pairs
$e,d \in \NN_0$ such that $e+d = n$.
\end{corollary}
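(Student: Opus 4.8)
The plan is to obtain the Corollary by chaining together the two structural results proved above, namely Theorem~\ref{T:classifOrders} and Proposition~\ref{P:reduction}. First I would recall from \cite{KPSST} that the quasi--trigonometric solutions of (\ref{E:CYBE}) for $\lieg$ are classified, up to the equivalence (\ref{E:equivalence}), by the Lagrangian orders $\lieW \subset \lieD$ transversal to $\lieP$ in the sense of Definition~\ref{D:orders}, considered up to the gauge equivalence $\lieW \mapsto \widetilde\sigma(\lieW)$ with $\sigma \in \Aut_{\CC[z]}(\lieg[z])$. A priori this is an infinite--dimensional moduli problem, since $\lieW$ lives inside the infinite--dimensional Lie algebra $\lieD$.

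The decisive step is Theorem~\ref{T:classifOrders}: for any Lagrangian order $\lieW$ transversal to $\lieP$ there exist $\sigma \in \Aut_{\CC[z]}(\lieg[z])$ and a pair $c, d \in \NN_0$ with $c + d = n$ such that $\widetilde\sigma(\lieW) \subseteq \lieO_{(c,d)}$. Thus every gauge class of Lagrangian orders admits a representative contained in one of the finitely many subalgebras $\lieO_{(c,d)}$, and it suffices to classify the Lagrangian orders lying inside a fixed $\lieO_{(c,d)}$, as $(c,d)$ ranges over the pairs with $c + d = n$. For each such pair I would then invoke Proposition~\ref{P:reduction}, which sets up a bijection between the Lagrangian orders $\lieW \subseteq \lieO_{(c,d)}$ transversal to $\lieP$ and the Lagrangian subalgebras $\liew \subset \lieg \times \lieg$ with $\lieg \times \lieg = \liew \dotplus \nabla_{(c,d)}$, realised by $\lieW \mapsto \imath\bigl(\lieW/\lieO_{(c,d)}^\perp\bigr)$ and its inverse. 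Since $\lieg \times \lieg$ is finite--dimensional, describing these Lagrangian complements to $\nabla_{(c,d)}$ is a finite--dimensional task, which is exactly the asserted reduction.

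Rather than a genuine obstacle, the only point requiring care is the bookkeeping of equivalences: the freedom in the choice of $\sigma$ in Theorem~\ref{T:classifOrders}, together with the $\CC[z]$--linear automorphisms of $\lieg[z]$ stabilising $\lieO_{(c,d)}$, descends under the isometry $\imath$ of Lemma~\ref{L:basicsOrders} to a finite--dimensional equivalence relation on the decompositions $\lieg \times \lieg = \liew \dotplus \nabla_{(c,d)}$. I would check that gauge--equivalent orders inside the same $\lieO_{(c,d)}$ are sent to decompositions related by the induced $\Aut(\lieg)$--action, so that the finite--dimensional classification faithfully recovers the gauge classes of orders, and hence the quasi--trigonometric solutions; this verification is routine given Lemma~\ref{L:basicsOrders}.
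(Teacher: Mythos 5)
Your argument is correct and is exactly the intended one: the paper states this corollary without proof, as an immediate consequence of chaining Theorem~\ref{T:classifOrders} (every Lagrangian order is gauge--equivalent to one contained in some $\lieO_{(c,d)}$ with $c+d=n$) with the bijection of Proposition~\ref{P:reduction}. Your additional remark on tracking the residual gauge equivalence under the isometry $\imath$ is a sensible point of care, though the paper does not address it either.
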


\begin{definition}\label{D:twisteddiag} For any $c,d \in \NN$ such that $\gcd(c,d) = 1$ and $c + d = n$, let
\begin{equation}\label{E:twisteddiag}
\Delta_{(c,d)} :=
\left\{(X, X^\sharp) \,\big|\, X \in \lieg \right\} = \left\{
\left(
\begin{array}{c|c}
A & B \\
\hline
C & D
\end{array}
\right),
\left(
\begin{array}{c||c}
D & C \\
\hline
\hline
B & A
\end{array}
\right)
\right\} \subseteq \lieg \times \lieg
\end{equation}
be the \emph{twisted diagonal}.
\end{definition}

\begin{proposition}\label{P:maindecomp}  In the notation as above, we have a Lagrangian decomposition
\begin{equation}\label{E:maindecomp}
\lieg \times \lieg = \Delta_{(c,d)} \dotplus \nabla_{(c,d)}.
\end{equation}
\end{proposition}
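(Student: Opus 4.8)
The plan is to verify that both summands are Lagrangian of dimension $n^2-1$ and then to reduce the transversality in (\ref{E:maindecomp}) to a single intersection computation. Since $\dim_\CC(\lieg\times\lieg)=2(n^2-1)$, once both $\Delta_{(c,d)}$ and $\nabla_{(c,d)}$ are known to be Lagrangian, the decomposition is equivalent to $\Delta_{(c,d)}\cap\nabla_{(c,d)}=0$. The algebra $\nabla_{(c,d)}$ is already Lagrangian by Lemma \ref{L:AlgebraNabla}. For $\Delta_{(c,d)}$ I would use that $X\mapsto X^\sharp=J_{(c,d)}XJ_{(c,d)}^{-1}$ is an inner automorphism of $\mathfrak{gl}_n(\CC)$ preserving the trace, hence an automorphism of $\lieg$; its graph $\Delta_{(c,d)}$ is therefore a Lie subalgebra of dimension $\dim_\CC\lieg=n^2-1$, and it is isotropic because $\langle(X,X^\sharp),(Y,Y^\sharp)\rangle=\tr(XY)-\tr(X^\sharp Y^\sharp)=0$, using $X^\sharp Y^\sharp=(XY)^\sharp$ together with the conjugation-invariance of the trace.

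Next I would translate membership in the intersection into combinatorics of matrix entries. An element $(X,X^\sharp)$ lies in $\nabla_{(c,d)}$ exactly when $X$ and $X^\sharp$ both lie in $\lieb_{(c,d)}$ and share the same block-diagonal (Levi) part. Writing $\tau_c$ for the shift $(i,j)\mapsto(i+c,j+c)$ on $\ZZ/n\ZZ\times\ZZ/n\ZZ$ and using the identity $(X^\sharp)_{ij}=X_{i+c,\,j+c}$ (which holds because $J_{(c,d)}$ is the permutation matrix of the $n$-cycle $i\mapsto i-c\bmod n$), these three conditions become: $X_{ij}=0$ for $(i,j)$ in the upper-right block $\mathcal B=\{(i,j):1\le i\le c<j\le n\}$; $X_{ij}=0$ for $(i,j)\in\tau_c(\mathcal B)$; and $X_{ij}=X_{\tau_c(i,j)}$ for every block-diagonal position $(i,j)$. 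The goal is then to show that these force $X=0$.

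The index set decomposes into $\tau_c$-orbits indexed by the difference $\delta=j-i\in\ZZ/n\ZZ$, each of length $n$ since $\gcd(c,n)=1$. On the diagonal orbit $\delta=0$ every position is block-diagonal, so the invariance condition forces all $X_{ii}$ to a common value $\lambda$, and $\tr(X)=n\lambda=0$ gives $\lambda=0$. On an orbit with $\delta\ne 0$ I would list its elements in $\tau_c$-order as $w_0,w_1,\dots,w_{n-1}$ and record whether each $w_k$ lies in $\mathcal B$, in the lower-left block $\mathcal C=\{(i,j):1\le j\le c<i\le n\}$, or on the block-diagonal. The conditions then read: $X_{w_k}=0$ whenever $w_k\in\mathcal B$ or $w_{k-1}\in\mathcal B$, and $X_{w_k}=X_{w_{k+1}}$ whenever $w_k$ is block-diagonal. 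Cutting the cyclic sequence after each non-block-diagonal vertex, every resulting arc carries a constant value and ends in a single $\mathcal B$- or $\mathcal C$-vertex; such an arc must vanish as soon as it ends in a $\mathcal B$-vertex, or as soon as the preceding arc does.

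The main obstacle is to show that every arc is annihilated, equivalently that the $\mathcal B$- and $\mathcal C$-vertices strictly alternate around each orbit. I expect to deduce this from a balancedness (Sturmian) property of the binary word $g_k:=\chi_H(i_0+kc)$, where $\chi_H$ is the indicator of $H=\{c+1,\dots,n\}$: as the coding of a discrete rotation, $g$ is balanced, so all its length-$m$ windows (with $m\equiv\delta c^{-1}\bmod n$) contain one of two consecutive numbers of $1$'s. Since the type value $g_{k+m}-g_k$ at $w_k$ is precisely the first difference of these window sums, its nonzero values alternate between $+1$ (a $\mathcal B$-vertex) and $-1$ (a $\mathcal C$-vertex). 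This alternation guarantees that each arc either ends in a $\mathcal B$-vertex or immediately follows one that does, so $X$ vanishes on every non-diagonal orbit; combined with the diagonal case this yields $X=0$ and hence $\Delta_{(c,d)}\cap\nabla_{(c,d)}=0$. Establishing the balancedness of $g$, where the coprimality $\gcd(c,n)=1$ is essential, is the technical heart of the argument.
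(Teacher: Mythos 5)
Your proof is correct, but it takes a genuinely different route from the paper's. The paper makes the same reduction you do (both summands are Lagrangian of dimension $n^2-1$, so everything hinges on $\Delta_{(c,d)}\cap\nabla_{(c,d)}=0$), but it then proves the triviality of the intersection \emph{geometrically}: an element of the intersection is identified, via the triple description of Corollary~\ref{C:SheafA}, with a global section of the sheaf $\kA_{(n,d)}=\mathit{Ad}_E(\kP)$ on the nodal cubic, and the vanishing $H^0(E,\kA_{(n,d)})=0$ — itself a one-line Euler-characteristic consequence of the simplicity of $\kP$ — finishes the argument. Your proof replaces this by a direct combinatorial analysis of the $\tau_c$-orbits on $\ZZ/n\ZZ\times\ZZ/n\ZZ$; it is self-contained and makes explicit exactly the orbit combinatorics (the chains behind $p_c(\alpha)$, $t(\beta)$) that the paper only deploys later, in Proposition~\ref{P:keydecomp} and Theorem~\ref{T:maincomputation}, whereas the paper's route is shorter and explains conceptually \emph{why} transversality holds. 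Two points in your sketch need to be nailed down. First, the balancedness of $g_k=\chi_H(i_0+kc)$ is not a property of the coding of a discrete rotation by an \emph{arbitrary} interval (rotation by $1$ on $\ZZ/5\ZZ$ coded by an interval of length $2$ gives the word $11000$, which is not balanced); it holds here because $|H|=d\equiv-c\pmod n$, so $g$ is the complement of a mechanical word of slope $c/n$. A quick direct proof: with $L=\{1,\dots,c\}$ and $y_l=i_k-1+lc$ one has $\chi_L(i_k+lc)=\lfloor y_l/n\rfloor-\lfloor y_{l-1}/n\rfloor$, so the window sum $S_k=\sum_{l=k}^{k+m-1}g_l$ equals $m$ minus the number of multiples of $n$ in a half-open interval of length $mc$, hence takes at most two consecutive values. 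Second, you should note that every orbit with $\delta\ne0$ actually contains a $\mathcal B$-vertex (since $H-\delta\ne H$ for $0<|H|<n$), so the cutting into arcs is nonvacuous and the alternation argument has somewhere to start. With these supplied, your argument is a complete, purely combinatorial substitute for the paper's cohomological one.
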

\begin{proof}
It is clear that  $\Delta_{(c,d)}$ is a coisotropic (hence Lagrangian) Lie subalgebra of $\lieg \times \lieg$ with respect to the form (\ref{E:classdouble1}).  Hence, it is sufficient to show that $\Delta_{(c,d)} \cap \nabla_{(c,d)} = 0$.
Instead of proving this statement  directly, we use a geometric argument.

\smallskip
\noindent
Let $\kA_{(n,d)}$ be the sheaf of Lie algebras on a nodal Weierstra\ss{} curve $E$, introduced in Corollary \ref{C:SheafA}.
Then we have: $\Gamma\bigl(E, \kA_{(n,d)}\bigr) = 0$. Moreover,
$$
\Gamma\bigl(\PP^1, \mathit{Ad}_{\PP^1}(\kF)\bigr) =
\left\{
\left(
\begin{array}{c|c}
A & 0 \\
\hline
C'z_0 + C''z_1 & D
\end{array}
\right) \left| \;\mbox{\rm where}\;
\left(
\begin{array}{c|c}
A & 0 \\
\hline
C' & D
\end{array}
\right), \left(
\begin{array}{c|c}
A & 0 \\
\hline
C'' & D
\end{array}
\right) \in \lieb_{(c,d)} \right.
\right\},
$$
where as usual, $\kF = \kO_{\PP^1}^{\oplus c} \oplus  \bigl(\kO_{\PP^1}(1)\bigr)^{\oplus d}$. It follows from the definition of the functor (\ref{E:equivcat}) that the structure sheaf $\kO_E$ corresponds to the triple $\Bigl(\kO_{\PP^1}, \CC, \bigl((1), (1)\bigr)\Bigr)$. Therefore,
$$
\Hom_E(\kO_E, \kA) = \Hom_{\mathsf{Comma(E)}}\Bigl(\bigl(\kO_{\PP^1}, \CC, \bigl((1), (1)\bigr)\bigr), \bigl(\mathit{Ad}_{\PP^1}(\kF), \lieg, \bigl(\mathrm{Id}, \mathrm{Ad}_{J_{(c,d)}}\bigr)\bigr)\Bigr) = 0.
$$
This condition precisely means that whenever
$$\left(
\begin{array}{c|c}
A & 0 \\
\hline
C''& D
\end{array}
\right) =
J_{(c,d)} \cdot
\left(
\begin{array}{c|c}
A & 0 \\
\hline
C'& D
\end{array}
\right)\cdot
J_{(c,d)}^{-1} = \left(
\begin{array}{c||c}
D & C' \\
\hline
\hline
0 & A
\end{array}
\right)
$$
for some $\left(
\begin{array}{c|c}
A & 0 \\
\hline
C' & D
\end{array}
\right), \left(
\begin{array}{c|c}
A & 0 \\
\hline
C'' & D
\end{array}
\right) \in \lieb_{(c,d)}$ then $\left(
\begin{array}{c|c}
A & 0 \\
\hline
C' & D
\end{array}
\right) =  \left(
\begin{array}{c|c}
A & 0 \\
\hline
C'' & D
\end{array}
\right) = 0$. Therefore, $\Delta_{(c,d)} \cap \bigl(J_{(c,d)} \nabla_{(c,d)} J_{(c,d)}^{-1}\bigr) = 0$, implying the statement.
\end{proof}

\begin{theorem}\label{T:CremmerGervaisOrder} The following vector space
\begin{equation}\label{E:CremmerGervaisOrder}
\lieW_{(c,d)}:=  \left(z^{-2} \lieg\llbracket z^{-1}\rrbracket + z^{-1} \lieb_{(c,d)} + \lien_{(c,d)} \right) \times \{0\} + \widetilde{\Delta}_{(c,d)} \subset \lieD,
\end{equation}
where
\begin{equation}
\widetilde{\Delta}_{(c,d)} :=
\left\{
\left(
\left(
\begin{array}{c|c}
A & z^{-1}B \\
\hline
zC & D
\end{array}
\right),
\left(
\begin{array}{c||c}
D & C \\
\hline\hline
B & A
\end{array}
\right)
\right)
\left| \; \mbox{\rm for all} \;  \left(
\begin{array}{c|c}
A & B \\
\hline
C & D
\end{array}
\right) \in \lieg \right.
\right\} \subset \lieD,
\end{equation}
is a Lagrangian order in $\lieD$ transversal to $\lieP$.
\end{theorem}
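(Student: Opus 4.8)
The plan is to identify $\lieW_{(c,d)}$ with the Lagrangian order corresponding to the twisted diagonal $\Delta_{(c,d)}$ under the bijection of Proposition~\ref{P:reduction}, and then to invoke Proposition~\ref{P:maindecomp}.

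First, I would make the first component of $\widetilde{\Delta}_{(c,d)}$ explicit. Writing $X \in \lieg$ in the block form of (\ref{E:partition}) and conjugating by $T_{(c,d)} = \mathrm{diag}(I_c, z I_d)$, a direct computation shows that $\mathrm{Ad}_{(c,d)}(X)$ leaves the diagonal blocks $A$ and $D$ unchanged and multiplies the off--diagonal blocks $B$ and $C$ by $z^{-1}$ and $z$ respectively. This is exactly the first matrix entry appearing in the definition of $\widetilde{\Delta}_{(c,d)}$, while the second entry is $X^\sharp$. Hence $\widetilde{\Delta}_{(c,d)} = \bigl\{\bigl(\mathrm{Ad}_{(c,d)}(X),\, X^\sharp\bigr) \,\big|\, X \in \lieg\bigr\}$.

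Next, recalling from Lemma~\ref{L:basicsOrders} that $\lieO_{(c,d)}^\perp = \bigl(z^{-2}\lieg\llbracket z^{-1}\rrbracket + z^{-1}\lieb_{(c,d)} + \lien_{(c,d)}\bigr) \times \{0\}$, the defining formula (\ref{E:CremmerGervaisOrder}) becomes $\lieW_{(c,d)} = \lieO_{(c,d)}^\perp \dotplus \widetilde{\Delta}_{(c,d)}$. Comparing this with the backward construction in the proof of Proposition~\ref{P:reduction}, which to a Lagrangian subalgebra $\liew \subset \lieg \times \lieg$ assigns $\imath^{-1}(\liew) = \lieO_{(c,d)}^\perp \dotplus \bigl\{\bigl(\mathrm{Ad}_{(c,d)}(f_1),\, f_2\bigr) \,\big|\, (f_1, f_2) \in \liew\bigr\}$, I see at once that $\lieW_{(c,d)} = \imath^{-1}\bigl(\Delta_{(c,d)}\bigr)$, where $\Delta_{(c,d)}$ is the twisted diagonal of Definition~\ref{D:twisteddiag}.

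Finally, I would invoke Proposition~\ref{P:maindecomp}, according to which $\Delta_{(c,d)}$ is a Lagrangian Lie subalgebra of $\lieg \times \lieg$ and $\lieg \times \lieg = \Delta_{(c,d)} \dotplus \nabla_{(c,d)}$. By the backward direction of the bijection in Proposition~\ref{P:reduction}, the preimage $\imath^{-1}\bigl(\Delta_{(c,d)}\bigr) = \lieW_{(c,d)}$ is then a Lagrangian order in $\lieD$ transversal to $\lieP$, which is precisely the assertion. I do not anticipate a genuine obstacle once the reduction machinery (Proposition~\ref{P:reduction}) is in place; the statement is essentially a bookkeeping identity. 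The only step requiring care is the explicit evaluation of $\mathrm{Ad}_{(c,d)}$ on constant matrices, since it is exactly the resulting scaling of the off--diagonal blocks by $z^{\pm 1}$ that matches the a priori opaque definition of $\widetilde{\Delta}_{(c,d)}$ with the image of the twisted diagonal $\Delta_{(c,d)}$.
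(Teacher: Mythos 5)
Your proposal is correct and is exactly the paper's argument: the paper's one-line proof likewise identifies $\lieW_{(c,d)}$ as the Lagrangian order corresponding to the twisted diagonal $\Delta_{(c,d)}$ via Proposition \ref{P:reduction} and Proposition \ref{P:maindecomp}. You merely spell out the details (the computation of $\mathrm{Ad}_{(c,d)}$ on block matrices and the matching with the backward construction of Proposition \ref{P:reduction}) that the paper leaves implicit.
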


\begin{proof} The vector space $\lieW_{(c,d)}$ is precisely the Lagrangian order  corresponding to   the twisted diagonal $\Delta_{(c,d)}$ defined by
(\ref{E:twisteddiag}).
\end{proof}

\section{On quasi--trigonometric solutions of CYBE}

\noindent
Let $\lieW$ be a Lagrangian order in $\lieD$, transversal to $\lieP$; see Definition \ref{D:orders}.
We begin with a description of the procedure, assigning to $\lieW$ the corresponding quasi--trigonometric solution $r_{\lieW}$ of the classical Yang--Baxter equation (\ref{E:CYBE}).

Let $\bigl\{g_\beta\bigr\}_{\beta \in \Phi}$ be a basis of the Lie algebra   $\lieg$, where $\Phi$ is  an  index set. For any
$(k, \beta) \in \widehat\Phi := \NN_0 \times \Phi$, we denote by $g_{(k, \beta)} = \jmath\bigl(g_\beta z^k\bigr) \in \lieP$. Obviously,
$\bigl\{g_{(k, \beta)}\bigr\}_{(k, \beta) \in \widehat\Phi}$ is a basis of the  Lie algebra $\lieP$.  Let $\bigl\{w_{(k, \beta)}\bigr\}_{(k, \beta) \in \widehat\Phi}$ be the corresponding dual topological basis
of $\lieW$, i.e. $\bigl\langle g_{(k,\beta)}, w_{(k', \beta')}\bigr\rangle = \delta_{k, k'} \cdot \delta_{\beta, \beta'}$ for all $k ,k' \in \NN_0$ and $\beta, \beta' \in \Phi$.

\smallskip
\noindent
Let $\lieD \stackrel{\pi}\lar  \lieg\llbrace z^{-1}\rrbrace$ be the canonical projection.
For the following result, see \cite[Theorem 2 and Theorem 4]{KPSST}.

\begin{theorem}
 The  formal power series
\begin{equation*}
r_{\lieW}(x,y):= \sum\limits_{(k, \beta)\in \widehat\Phi} \pi(g_{(k, \beta)})\Big|_x \otimes \pi\bigl(w_{(k, \beta)}\bigr)\Big|_y = \sum\limits_{(k, \beta)\in \widehat\Phi} x^k g_\beta \otimes \pi\bigl(w_{(k, \beta)}\bigr)\Big|_y \in \lieg[x] \widehat\otimes \lieg\llbrace y^{-1}\rrbrace
\end{equation*}
does not depend on the initial choice of basis $\bigl\{g_\beta\bigr\}_{\beta \in \Phi}$ of the Lie algebra $\lieg$  and
defines  a quasi--trigonometric solution of (\ref{E:CYBE}). Moreover, for any $\sigma \in \Aut_{\CC[z]}\bigl(\lieg[z]\bigr)$ we have:
$$
r_{\widetilde{\sigma}(\lieW)}(x, y) = \bigl(\sigma(x) \otimes \sigma(y)\bigr)\bigl(r_{\lieW}(x,y)\bigr).
$$
\end{theorem}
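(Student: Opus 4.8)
The plan is to realize all three assertions as consequences of the Manin--triple structure $\lieD=\lieP\dotplus\lieW$ and to transport everything along the projection $\pi$. The natural object to work with is the \emph{universal element}
$$
\mathsf{r}_\lieW:=\sum_{(k,\beta)\in\widehat\Phi} g_{(k,\beta)}\otimes w_{(k,\beta)}\;\in\;\lieP\,\widehat{\otimes}\,\lieW\;\subset\;\lieD\,\widehat{\otimes}\,\lieD,
$$
of which $r_\lieW$ is the image under $\pi\otimes\pi$ after renaming the variable $z$ as $x$ in the first and as $y$ in the second tensor factor. The completion is harmless: since $\lieP$ and $\lieW$ are transversal Lagrangians, the dual vector $w_{(k,\beta)}$ is \emph{unique}, and for $k\ge m$ one checks directly that $(g^\beta z^{-k},0)\in z^{-m}\lieg\llbracket z^{-1}\rrbracket\times\{0\}\subseteq\lieW$ (with $\{g^\beta\}$ the trace--dual basis) has the prescribed pairings, so $w_{(k,\beta)}=(g^\beta z^{-k},0)$ there; this forces the sum to converge in $\lieg[x]\,\widehat{\otimes}\,\lieg\llbrace y^{-1}\rrbrace$.

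\emph{Independence of the basis.} A change $g'_\beta=\sum_\gamma A_{\beta\gamma}g_\gamma$ induces the same transition matrix $A$ on each graded piece $\{g_{(k,\beta)}\}_\beta$ of $\lieP$. As $\lieP$ and $\lieW$ are Lagrangian and transversal, the pairing $\lieP\times\lieW\to\CC$ coming from (\ref{E:mainform}) is non--degenerate, so the associated dual basis of $\lieW$ transforms by $(A^{\mathsf{T}})^{-1}$; the usual dual--basis cancellation then leaves $\mathsf{r}_\lieW$, and hence $r_\lieW$, unchanged.

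\emph{The Yang--Baxter identity.} I would invoke the standard fact that the canonical element of a Manin triple solves the constant classical Yang--Baxter equation: as $\lieP$ and $\lieW$ are mutually dual isotropic Lie subalgebras for the invariant non--degenerate form (\ref{E:mainform}), one has
$$
[\mathsf{r}_\lieW^{12},\mathsf{r}_\lieW^{13}]+[\mathsf{r}_\lieW^{12},\mathsf{r}_\lieW^{23}]+[\mathsf{r}_\lieW^{13},\mathsf{r}_\lieW^{23}]=0
$$
in $\lieD\,\widehat{\otimes}\,\lieD\,\widehat{\otimes}\,\lieD$. Because $\lieD=\lieg\llbrace z^{-1}\rrbrace\times\lieg$ is a direct product of Lie algebras, $\pi$ is a Lie algebra homomorphism, so $\pi\otimes\pi\otimes\pi$ commutes with each of the three brackets and carries this identity to the same identity for $(\pi\otimes\pi)(\mathsf{r}_\lieW)$. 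Reading the three tensor slots as carrying independent spectral variables $x_1,x_2,x_3$ turns it into precisely (\ref{E:CYBE}) for $r_\lieW$. I expect the main obstacle to lie exactly in the displayed constant identity: the triple being infinite--dimensional, the ``dual basis'' of $\lieW$ is only a topological basis and $\mathsf{r}_\lieW$ lives in a completion, so the clean finite--dimensional structure--constant computation must be justified one $z$--degree at a time, after verifying that in each slot only finitely many summands contribute to a fixed power of the spectral variable.

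\emph{Quasi--trigonometric shape and equivariance.} For the shape (\ref{E:quasitrigon}) I would split off the contribution of the large summand: by the computation of $w_{(k,\beta)}$ above, pairing against $g_{(l,\beta')}$ with $l\ge 1$ shows that the negative--power part of $\pi(w_{(k,\beta)})$ is exactly $g^\beta z^{-k}$, so the terms with $k\ge 1$ produce $\bigl(\sum_{k\ge1}x^ky^{-k}\bigr)\gamma=\tfrac{x}{y-x}\gamma$, while the remaining contributions come from the finitely many $w_{(k,\beta)}$ with $k<m$, whose polynomial parts (bounded positive powers of $z$) together with the $k=0$ term assemble into a polynomial $p(x,y)\in(\lieg\otimes\lieg)[x,y]$. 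Finally, $\widetilde\sigma$ preserves (\ref{E:mainform}) and fixes $\lieP$ (from $\CC[z]$--linearity, $\sigma(F)(0)=\sigma_\circ(F(0))$), hence sends $\lieD=\lieP\dotplus\lieW$ to $\lieD=\lieP\dotplus\widetilde\sigma(\lieW)$ and the dual bases $\{g_{(k,\beta)}\},\{w_{(k,\beta)}\}$ to admissible dual bases for $\widetilde\sigma(\lieW)$. Computing $r_{\widetilde\sigma(\lieW)}$ with these (legitimate by the independence just proved) and using $\pi\circ\widetilde\sigma=\sigma\circ\pi$ together with $\sigma(g_\beta z^k)\big|_{z=x}=\sigma(x)\bigl(x^k g_\beta\bigr)$, I pull $\sigma$ out of each tensor factor to obtain $r_{\widetilde\sigma(\lieW)}(x,y)=\bigl(\sigma(x)\otimes\sigma(y)\bigr)\bigl(r_\lieW(x,y)\bigr)$.
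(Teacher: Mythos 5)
The paper does not actually prove this theorem: the sentence immediately preceding it reads ``For the following result, see \cite[Theorem 2 and Theorem 4]{KPSST}'', so the statement is imported wholesale from Khoroshkin--Pop--Samsonov--Stolin--Tolstoy rather than argued in the text. Your sketch is therefore not competing with an in-paper argument, but it is a faithful outline of the standard proof: $r_{\lieW}$ is the image under $\pi\otimes\pi$ of the canonical element of the Manin triple $\lieD=\lieP\dotplus\lieW$; basis independence is dual--basis cancellation; the CYBE follows from the classical fact that the canonical element of a Manin triple satisfies the constant Yang--Baxter identity (equivalently, that $\lieW$ is closed under the bracket); and the quasi--trigonometric shape comes from splitting off the tail $z^{-m}\lieg\llbracket z^{-1}\rrbracket\times\{0\}\subseteq\lieW$ --- exactly the mechanism the paper itself exploits in the Lemma that follows, where it writes $w^\circ_{(k,\beta)}=w_{(k,\beta)}-p_{(k,\beta)}$ with $p_{(k,\beta)}\in\lieP$ vanishing for $k\ge m$.

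Two points deserve tightening. First, the independence you actually prove (a transition matrix $A$ acting identically on each graded piece) only covers changes of the basis $\{g_\beta\}$ of $\lieg$; but in the equivariance step you apply it to the dual pair $\{\widetilde\sigma(g_{(k,\beta)})\}$, $\{\widetilde\sigma(w_{(k,\beta)})\}$, and $\widetilde\sigma(g_{(k,\beta)})=\jmath\bigl(z^k\sigma(g_\beta)\bigr)$ is in general \emph{not} of the form $\jmath(z^k g'_\beta)$ for a basis $\{g'_\beta\}$ of $\lieg$. What you need --- and what your ``universal element'' framing already supplies, once stated at the right level of generality --- is that $\mathsf{r}_{\lieW}$ is the image of $\id_{\lieP}$ under the identification $\End(\lieP)\supseteq\lieP\,\widehat{\otimes}\,\lieW$ coming from the nondegenerate pairing, hence independent of \emph{any} choice of dual topological bases of $\lieP$ and $\lieW$, not merely of those induced by a basis of $\lieg$. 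Second, the convergence of $[\mathsf{r}^{12},\mathsf{r}^{13}]$ and its companions in the completed triple tensor product is genuinely the crux in this infinite--dimensional setting; you flag it honestly, and the repair is the one you indicate (for a fixed power of each spectral variable only finitely many summands contribute, because $w_{(k,\beta)}=(g^\beta z^{-k},0)$ for all $k\ge m$). With these two points made explicit, your outline is a correct and self-contained substitute for the citation.
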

We have the standard triangular decomposition $\lieg = \lien_+ \oplus \lieh \oplus \lien_-$ of the Lie algebra $\lieg$ into a direct sum of its  Lie subalgebras, consisting  of upper triangular, diagonal and lower diagonal matrices respectively. In these terms, we have: $\Phi = \Phi_+ \cup \Phi_0 \cup \Phi_-$, where $\Phi_+  := \bigl\{(i, j) \in \NN^2 \,\big|\,  1 \le i < j \le n\bigr\}$ denotes the set of positive roots of $\lieg$, $\Phi_-  := \bigl\{(i, j) \in \NN^2 \,\big|\,  1 \le j < i \le n\bigr\}$ is  the set of negative roots and $\big|\Phi_0\big| = n-1$. Let $\{h_i\}_{i \in \Phi_0}$ be some (not necessary orthogonal) basis of $\lieh$. For any $\beta \in \Phi$ we put:
\begin{equation}\label{E:basisG}
g_\beta =
\left\{
\begin{array}{ccl}
e_{\pm \alpha} & \mbox{\rm if} &\beta = \pm \alpha  \in \Phi_\pm \\
h_i & \mbox{\rm if} &\beta = i \in \Phi_0.
\end{array}
\right.
\end{equation}
Let $\lieD = \lieP \dotplus \lieW_\circ$ be the standard  triangular decomposition, where $\lieW_\circ$ is the order introduced in  Example \ref{Ex:BasicOrder}, and $\left\{w_{(k, \beta)}^\circ\right\}_{(k, \beta) \in \widehat\Phi}$ be the  topological basis of $\lieW^\circ$, dual to
$\left\{g_{(k, \beta)}\right\}_{(k, \beta) \in \widehat\Phi}$.
 Then the following formulae are true.
$$
\left\{
\begin{array}{llccl}
w_{(k,\beta)}^\circ  & = & (z^{-k} g_{\beta}^\ast, 0)& \mbox{\rm  for any} &  k \ge 1 \; \mbox{\rm and} \;  \beta \in \Phi\\
w_{(0,\alpha)}^\circ  & = & (e_{-\alpha}, 0)& \mbox{\rm  for any} & \alpha \in \Phi_+ \\
w_{(0,-\alpha)}^\circ  & = & (0, -e_{\alpha})& \mbox{\rm  for any} & \alpha \in \Phi_+ \\
w_{(0,i)}^\circ & = & (\frac{1}{2} h_i^\ast, -\frac{1}{2} h_i^\ast)& \mbox{\rm  for any} & i \in \Phi_0. \\
\end{array}
\right.
$$
Since $\gamma = \sum\limits_{\beta \in \Phi} g_\beta \otimes g_\beta^\ast \in \lieg \otimes \lieg$ is the Casimir element of $\lieg$, we have:
$$
\sum\limits_{k = 1}^\infty \sum\limits_{\beta \in \Phi} x^k g_\beta \otimes y^{-k} g_{\beta}^\ast = \frac{x}{y-x} \gamma.
$$
On the other hand, we have the formula:
$$
\sum\limits_{\beta \in \Phi} g_\beta \otimes \pi\bigl(w_{(0, \beta)}^\circ\bigr) = \frac{1}{2}\left(\gamma + \sum\limits_{\alpha \in \Phi_+} e_\alpha \wedge
e_{-\alpha}\right).
$$
Therefore, the series $r_{\mathsf{st}}:= r_{\lieW_\circ}$ corresponding to the order $\lieW_\circ$, has the form:
\begin{equation}\label{E:DrinfeldJimbo}
r_{\mathsf{st}}(x, y) = \frac{1}{2}\left(
\frac{y+x}{y-x} \gamma + \sum\limits_{\alpha \in \Phi_+} e_\alpha \wedge e_{-\alpha}\right).
\end{equation}
It is a well---known (called in what follows \emph{standard}) quasi--trigonometric solution of CYBE, defining the ``standard'' Lie bialgebra structure
on the Lie algebra $\lieg[z]$ (or, more conventionally, on $\lieg[z, z^{-1}]$ respectively  on the affine Lie algebra $\widehat{\lieg}$; see for instance \cite[Section 6.2.1]{EtingofSchiffmann}).
\begin{lemma} Let $\lieW$ be a Lagrangian order in $\lieD$ transversal to $\lieP$. Then for any $(k, \beta) \in \widehat\Phi$, there exist uniquely determined
 elements $w_{(k, \beta)} \in \lieW$ and $p_{(k, \beta)} \in \lieP$ such that
 $$
 w_{(k, \beta)}^\circ = w_{(k, \beta)} - p_{(k, \beta)} \quad \mbox{\rm for all}\quad (k, \beta) \in \widehat\Phi.
 $$
 Moreover, $\left\{w_{(k, \beta)}\right\}_{(k, \beta) \in \widehat\Phi}$ is a topological basis of $\lieW$ and
$\bigl\langle g_{(k',\beta')}, w_{(k, \beta)}\bigr\rangle = \delta_{k, k'} \cdot \delta_{\beta, \beta'}$ for all $k,k' \in \NN_0$ and $\beta, \beta' \in \Delta$. Finally, we have the following formula for the series $r_{\lieW}$:
\begin{equation}\label{E:FromOrderToCYBE}
r_{\lieW}(x, y) = r_{\mathsf{st}}(x, y) + p_{\lieW}(x,y) = r_{\mathsf{st}}(x, y) + \sum\limits_{(k, \beta) \in \widehat\Phi} x^k g_\beta \otimes p_{(k, \beta)}(y),
\end{equation}
where  $p_{\lieW}(x,y) \in (\lieg \otimes \lieg)[x, y]$.
\end{lemma}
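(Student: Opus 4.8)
The plan is to bootstrap everything from the standard decomposition $\lieD = \lieP \dotplus \lieW_\circ$ and its dual basis $\bigl\{w_{(k,\beta)}^\circ\bigr\}$, transporting the relevant data to $\lieW$ via the transversality $\lieD = \lieP \dotplus \lieW$. First I would produce the elements $w_{(k,\beta)}$ and $p_{(k,\beta)}$: since $w_{(k,\beta)}^\circ \in \lieW_\circ \subset \lieD = \lieP \dotplus \lieW$, the direct sum decomposition yields \emph{unique} $w_{(k,\beta)} \in \lieW$ and $p_{(k,\beta)} \in \lieP$ with $w_{(k,\beta)}^\circ = w_{(k,\beta)} - p_{(k,\beta)}$. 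This settles existence and uniqueness simultaneously.

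Next I would verify the duality relations. Writing $w_{(k,\beta)} = w_{(k,\beta)}^\circ + p_{(k,\beta)}$ and using bilinearity,
$$
\bigl\langle g_{(k',\beta')}, w_{(k,\beta)}\bigr\rangle = \bigl\langle g_{(k',\beta')}, w_{(k,\beta)}^\circ\bigr\rangle + \bigl\langle g_{(k',\beta')}, p_{(k,\beta)}\bigr\rangle.
$$
The first term equals $\delta_{k,k'}\delta_{\beta,\beta'}$ by definition of the dual basis of $\lieW_\circ$; the second vanishes because $g_{(k',\beta')}, p_{(k,\beta)} \in \lieP$ and $\lieP = \lieP^\perp$ is Lagrangian. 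As both $\lieP$ and $\lieW$ are Lagrangian and transversal, the form (\ref{E:mainform}) restricts to a perfect pairing between them (an element of $\lieP$ orthogonal to all of $\lieW$ is, being also orthogonal to $\lieP$, orthogonal to $\lieP + \lieW = \lieD$, hence zero). Consequently the family satisfying the above relations is uniquely determined and constitutes the dual topological basis of $\lieW$, by the standard fact that the dual family of a topological basis under a perfect pairing is again a topological basis.

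Finally, for the $r$--matrix formula I would substitute the decomposition into the series representation of $r_{\lieW}$ from the preceding theorem. Splitting
$$
\sum_{(k,\beta)} x^k g_\beta \otimes \pi\bigl(w_{(k,\beta)}\bigr)\big|_y = \sum_{(k,\beta)} x^k g_\beta \otimes \pi\bigl(w_{(k,\beta)}^\circ\bigr)\big|_y + \sum_{(k,\beta)} x^k g_\beta \otimes \pi\bigl(p_{(k,\beta)}\bigr)\big|_y,
$$
the first sum is precisely $r_{\lieW_\circ} = r_{\mathsf{st}}$, while the second is $p_{\lieW}(x,y) = \sum_{(k,\beta)} x^k g_\beta \otimes p_{(k,\beta)}(y)$ once one identifies $p_{(k,\beta)} \in \lieP \cong \lieg[z]$ with the corresponding polynomial evaluated at $z = y$. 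The single point requiring genuine input is that $p_{\lieW}$ is a \emph{polynomial}, and this I would deduce from the quasi--trigonometric property already established in the preceding theorem. Both $r_{\lieW}$ and $r_{\mathsf{st}}$ have the shape $\frac{x}{y-x}\gamma + (\text{polynomial})$: for $r_{\mathsf{st}}$ the polynomial part is the \emph{constant} $\frac{1}{2}\bigl(\gamma + \sum_{\alpha \in \Phi_+} e_\alpha \wedge e_{-\alpha}\bigr)$, as one reads off from (\ref{E:DrinfeldJimbo}) using $\frac{1}{2}\frac{y+x}{y-x} = \frac{1}{2} + \frac{x}{y-x}$. Hence the identical singular terms $\frac{x}{y-x}\gamma$ cancel in the difference $p_{\lieW} = r_{\lieW} - r_{\mathsf{st}}$, leaving an element of $(\lieg \otimes \lieg)[x,y]$. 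This cancellation of singularities is the crux of the argument, and it is exactly what the quasi--trigonometric normalization supplies.
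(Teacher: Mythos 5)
Your argument is correct, and for the first two assertions it coincides with the paper's proof: existence and uniqueness of $w_{(k,\beta)}$ and $p_{(k,\beta)}$ come from the direct sum $\lieD = \lieP \dotplus \lieW$ applied to $w_{(k,\beta)}^\circ$, and the duality relations follow from bilinearity together with $\bigl\langle \lieP, \lieP\bigr\rangle = 0$ (your extra remark that the form restricts to a perfect pairing between $\lieP$ and $\lieW$ is a reasonable justification of the ``topological basis'' claim, which the paper leaves implicit). Where you genuinely diverge is the polynomiality of $p_{\lieW}$. The paper gets this directly from condition (1) of Definition \ref{D:orders}: choosing $m$ with $z^{-m}\lieg\llbracket z^{-1}\rrbracket \times \{0\} \subseteq \lieW$, the element $w_{(k,\beta)}^\circ = (z^{-k}g_\beta^\ast, 0)$ already lies in $\lieW$ for $k \ge m$, so $p_{(k,\beta)} = 0$ there; this bounds the degree in $x$, and polynomiality in $y$ is automatic since each $p_{(k,\beta)}$ lies in $\lieP \cong \lieg[z]$. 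You instead cancel the singular parts of $r_{\lieW}$ and $r_{\mathsf{st}}$ using the quasi--trigonometric normal form asserted in the preceding theorem. That is logically sound, because that theorem is quoted as a black box from \cite{KPSST}, but it outsources the crux to the citation: the reason $r_{\lieW}$ is quasi--trigonometric in the first place is precisely the vanishing $p_{(k,\beta)} = 0$ for $k \ge m$, so the paper's route is the self--contained one and yours is a valid shortcut that buys brevity at the cost of hiding where condition (1) of the definition of a Lagrangian order actually enters.
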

\begin{proof}
Existence and uniqueness of the elements $w_{(k, \beta)} \in \lieW$ and $p_{(k, \beta)} \in \lieP$  for each $(k, \beta) \in \widehat\Phi$ follows from the direct sum decomposition
$\lieD = \lieW \dotplus \lieP$, applied to the element $w_{(k, \beta)}^\circ \in \lieD$.  Since $\lieP$ is a coisotropic subspace of $\lieW$, we have:
$$
\bigl\langle g_{(k',\beta')}, w_{(k, \beta)}\bigr\rangle = \bigl\langle g_{(k',\beta')}, w_{(k, \beta)}^\circ + p_{(k, \beta)}\bigr\rangle = \bigl\langle g_{(k',\beta')}, w_{(k, \beta)}^\circ\bigr\rangle = \delta_{k, k'} \cdot \delta_{\beta, \beta'}.
$$
It follows from the first assumption of Definition \ref{D:orders} that $p_{(k, \beta)} = 0$ for any $(k, \beta)$ such that $k \ge m$. This implies that
$p_{\lieW}(x,y) \in (\lieg \otimes \lieg)[x, y]$.
\end{proof}

\smallskip
\noindent
Our next goal is to derive  a closed formula for the quasi--trigonometric $r$--matrix $r_{(c, d)}$, corresponding to the order $\lieW_{(c, d)}$ defined by formula (\ref{E:CremmerGervaisOrder}).
\begin{definition} Let
$\bar{\Phi} := \bigl\{(i, j) \in \NN^2 \,\big|\, 1 \le i, j \le n \bigr\} \cong \ZZ/n\ZZ \times \ZZ/n\ZZ$.
Then we have a permutation
$
\bar{\Phi} \stackrel{\tau_c}\lar \bar{\Phi}, \; (i, j) \mapsto (i+c, j+c).
$
Note that due to the condition $\gcd(n, c) = 1$, the permutation $\tau_c$ has order $n$.  Abusing the notation, we shall also denote by $\tau_c$ the Lie algebra automorphism
$$
\tau_c = \mathrm{Ad}_{K_{(c,d)}}: \mathfrak{gl}_n(\CC) \lar \mathfrak{gl}_n(\CC), \quad X \mapsto  K_{(c,d)}\cdot X\cdot K_{(c,d)}^{-1},
$$
since $\tau_c\bigl(e_{i,\,j}\bigr) = e_{i+c,\,j+c}$ for any $(i, j) \in \bar\Phi$. Note that in the notation (\ref{E:Xsharp}) we have:
$X = \bigl(\tau_c(X)\bigr)^\sharp$ for any $X \in \mathfrak{gl}_n(\CC)$.
\end{definition}
\begin{lemma}\label{L:trivia} Let $u = e_{11} \in \mathfrak{gl}_n(\CC)$ be the first matrix unit and $I \in \mathfrak{gl}_n(\CC)$ be  the identity matrix.
For any $1 \le i \le n-1$, we put:
\begin{equation}\label{E:basish}
q_{i, c} := \tau_c^{i}(u) - \tau_c^{i-1}(u), \; w_{i, c} = \tau_c^{i-1}(u) - \frac{1}{n}I \;\; \mbox{\rm and}\;\; f_{i, c} := \frac{1}{2}\bigl(\tau_c^{i}(u) + \tau_c^{i-1}(u)\bigr) - \frac{1}{n} I.
\end{equation}
Then the following results are true.
\begin{enumerate}
\item $\left(q_{1, c}, \dots, q_{n-1, c}\right)$ is a basis of the standard Cartan Lie subalgebra $\lieh \subset \lieg$.
\item For any $1 \le i \le n-1$, we have the following identity in $\lieh \times \lieh$:
$$
\frac{1}{2}\left(q_{i, c}, -q_{i, c}\right) = \bigl(\tau_c(w_{i, c}), w_{i, c}\bigr) - (f_{i, c}, f_{i, c}).
$$
\item
Let $\left(q_{1, c}^\ast, \dots, q_{n-1, c}^\ast\right)$ be the dual basis of  $\lieh$ with respect to the trace form. Then
$$
\left\langle(q_{i, c}^\ast, q_{i, c}^\ast), \bigl(\tau(w_{j, c}), w_{j, c}\bigr)\right\rangle = \delta_{ij} \quad \mbox{\rm for all}\quad 1 \le i, j \le n-1.
$$
\end{enumerate}
\end{lemma}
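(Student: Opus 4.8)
The plan is to reduce all three assertions to bookkeeping with the diagonal matrix units, so I would begin by recording the basic facts about the automorphism $\tau_c$. Since $\tau_c(e_{ij}) = e_{i+c,\,j+c}$ with indices read modulo $n$, one has $\tau_c^k(u) = e_{1+kc,\,1+kc}$, and the hypothesis $\gcd(n,c) = 1$ guarantees that $k \mapsto 1 + kc \pmod n$ is a bijection of $\{0, 1, \dots, n-1\}$ onto $\{1, \dots, n\}$; hence $\bigl(\tau_c^0(u), \dots, \tau_c^{n-1}(u)\bigr)$ is simply a permutation of the diagonal idempotents $(e_{11}, \dots, e_{nn})$. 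I would also note at the outset that $\tau_c(I) = K_{(c,d)} I K_{(c,d)}^{-1} = I$, so that each $\tau_c^k(u) - \frac{1}{n} I$ is a traceless diagonal matrix; in particular $w_{i,c}, f_{i,c} \in \lieh$, which makes the identities in (2) genuinely live in $\lieh \times \lieh$.

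For part (1), each $q_{i,c} = \tau_c^i(u) - \tau_c^{i-1}(u)$ is a difference of two distinct diagonal matrix units, hence a traceless diagonal matrix lying in $\lieh$; there are $n - 1 = \dim_\CC \lieh$ of them, so it suffices to prove linear independence. Writing $p_k := \tau_c^k(u)$, a relation $\sum_{i=1}^{n-1} a_i(p_i - p_{i-1}) = 0$ becomes, after collecting coefficients against the linearly independent idempotents $p_0, \dots, p_{n-1}$, the system $-a_1 = 0$, $a_k - a_{k+1} = 0$ for $1 \le k \le n-2$, and $a_{n-1} = 0$; this telescoping argument forces all $a_i = 0$.

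Part (2) is a direct computation in $\lieh$. Using $\tau_c(I) = I$ I would expand the first slot as $\tau_c(w_{i,c}) - f_{i,c} = \bigl(\tau_c^i(u) - \tfrac1n I\bigr) - \bigl(\tfrac12(\tau_c^i(u) + \tau_c^{i-1}(u)) - \tfrac1n I\bigr) = \tfrac12\bigl(\tau_c^i(u) - \tau_c^{i-1}(u)\bigr) = \tfrac12 q_{i,c}$, and symmetrically the second slot gives $w_{i,c} - f_{i,c} = -\tfrac12 q_{i,c}$; together these are exactly $\frac12(q_{i,c}, -q_{i,c})$.

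Finally, part (3) is where the mild cleverness lies. Evaluating the form (\ref{E:classdouble1}) gives $\langle(q_{i,c}^\ast, q_{i,c}^\ast), (\tau_c(w_{j,c}), w_{j,c})\rangle = \tr\bigl(q_{i,c}^\ast \tau_c(w_{j,c})\bigr) - \tr\bigl(q_{i,c}^\ast w_{j,c}\bigr) = \tr\bigl(q_{i,c}^\ast(\tau_c(w_{j,c}) - w_{j,c})\bigr)$, and the key observation is that $\tau_c(w_{j,c}) - w_{j,c} = \tau_c^j(u) - \tau_c^{j-1}(u) = q_{j,c}$; the expression therefore collapses to $\tr(q_{i,c}^\ast q_{j,c}) = \delta_{ij}$ by the very definition of the dual basis. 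Overall there is no serious obstacle here: the only points requiring care are the modular bookkeeping establishing that the $\tau_c^k(u)$ exhaust the diagonal units (needed for (1)) and the cancellation $\tau_c(w_{j,c}) - w_{j,c} = q_{j,c}$ that makes (3) reduce to the defining property of the dual basis.
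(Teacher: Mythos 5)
Your proof is correct and is precisely the ``straightforward computation'' that the paper leaves to the reader: the reduction to the diagonal idempotents $\tau_c^k(u)$, the telescoping argument for linear independence, the cancellation $\tau_c(w_{j,c}) - w_{j,c} = q_{j,c}$, and the sign in the form (\ref{E:classdouble1}) are all handled exactly as intended.
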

\begin{proof}
Straightforward computation.
\end{proof}

\begin{definition} In the notation of formula (\ref{E:basisG}), we put: $h_{i, c} = q_{i, c}^\ast$ for all $1 \le i \le n-1$.
For any $\alpha \in \Phi_+ = \bigl\{(i, j) \in \bar{\Phi} \,\big|\,  i < j \bigr\}$, we denote:
$
p_c(\alpha) = \min\left\{k \in \NN \, \big|\, \tau_c^k(\alpha) \notin \Phi_+\right\}.
$
\end{definition}

\noindent
The following theorem is the main result of this section.
\begin{theorem}\label{T:maincomputation} Let $r_{(c, d)}$ be the solution of the classical Yang--Baxter equation (\ref{E:CYBE}),
 corresponding to the order $\lieW_{(c, d)}$ defined by formula (\ref{E:CremmerGervaisOrder}). Then we have:
\begin{equation}\label{E:finalformula1}
r_{(c, d)}(x, y) = r_{\mathsf{st}}(x, y) + p_{(c, d)}(x, y),
\end{equation}
where
$
r_{\mathsf{st}}(x, y) = \dfrac{1}{2}\Bigl(\dfrac{y+x}{y-x} \gamma + \sum\limits_{\alpha \in \Phi_+} e_{\alpha} \wedge e_{-\alpha}\Bigr)
$
is the ``standard'' quasi--trigonometric $r$--matrix  and
\begin{equation*}
p_{(c, d)}(x,y) = \sum\limits_{\alpha \in \Phi_+}\Bigl(\Bigl(\sum\limits_{k = 1}^{p(\alpha)-1} e_{\tau^k(\alpha)} \wedge e_{-\alpha}\Bigr) +
x e_{\tau^{p(\alpha)}(\alpha)} \otimes
e_{-\alpha}  - y e_{-\alpha} \otimes e_{\tau^{p(\alpha)}(\alpha)}\Bigr) + \sum\limits_{i=1}^{n-1} q_i^\ast \otimes f_i.
\end{equation*}
\end{theorem}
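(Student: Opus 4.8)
The plan is to apply the general reduction formula (\ref{E:FromOrderToCYBE}): writing $w_{(k,\beta)}^\circ = w_{(k,\beta)} - p_{(k,\beta)}$ with $w_{(k,\beta)} \in \lieW_{(c,d)}$ and $p_{(k,\beta)} \in \lieP$, we get $r_{(c,d)} = r_{\mathsf{st}} + \sum_{(k,\beta)} x^k g_\beta \otimes \pi(p_{(k,\beta)})\big|_y$, so the entire task is to project each standard dual vector $w_{(k,\beta)}^\circ$ onto $\lieP$ along $\lieW_{(c,d)}$. First I would note that since $z^{-2}\lieg\llbracket z^{-1}\rrbracket\times\{0\}\subseteq\lieW_{(c,d)}$, every vector $w_{(k,\beta)}^\circ = (z^{-k}g_\beta^\ast,0)$ with $k\ge 2$ already lies in $\lieW_{(c,d)}$; hence $p_{(k,\beta)}=0$ there, only $k\in\{0,1\}$ contribute, and the sum is automatically a polynomial, as required.

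For the Cartan contributions ($\beta = i\in\Phi_0$) I would use Lemma \ref{L:trivia}. Since $h_i = q_{i,c}^\ast$, the dual (double-dual) is $h_i^\ast = q_{i,c}$, so $w_{(0,i)}^\circ = \tfrac12(q_{i,c},-q_{i,c})$, and part (2) of Lemma \ref{L:trivia} rewrites this as $(\tau_c(w_{i,c}),w_{i,c}) - (f_{i,c},f_{i,c})$. Because $\tau_c(w_{i,c})$ is a diagonal matrix (fixed by $\mathrm{Ad}_{(c,d)}$) and $\tau_c^{-1}(\tau_c(w_{i,c}))=w_{i,c}$, the first summand lies in $\widetilde\Delta_{(c,d)}\subseteq\lieW_{(c,d)}$, whereas $(f_{i,c},f_{i,c})=\jmath(f_{i,c})\in\lieP$. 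Thus $p_{(0,i)}=(f_{i,c},f_{i,c})$ and the Cartan part of $p_{(c,d)}$ is $\sum_{i} q_{i,c}^\ast\otimes f_{i,c}$, which is the last summand of the asserted formula.

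The root part is the substance. I would use the splitting $\lieW_{(c,d)} = \lieO_{(c,d)}^\perp \dotplus \widetilde\Delta_{(c,d)}$, where $\lieO_{(c,d)}^\perp = (z^{-2}\lieg\llbracket z^{-1}\rrbracket + z^{-1}\lieb_{(c,d)} + \lien_{(c,d)})\times\{0\}$ (Lemma \ref{L:basicsOrders}) and $\widetilde\Delta_{(c,d)}=\{(\mathrm{Ad}_{(c,d)}(X),X^\sharp)\mid X\in\lieg\}$ with $X^\sharp=\tau_c^{-1}(X)$. For a fixed $\alpha\in\Phi_+$, the three relevant vectors $w_{(0,-\alpha)}^\circ=(0,-e_\alpha)$, $w_{(0,\alpha)}^\circ=(e_{-\alpha},0)$ and $w_{(1,-\alpha)}^\circ=(z^{-1}e_\alpha,0)$ are each reduced by matching the offending component against a single $\widetilde\Delta_{(c,d)}$-element indexed by matrix units along the $\tau_c$-orbit of $\alpha$, the remainder falling into $\lieO_{(c,d)}^\perp$ or $\lieP$. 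Concretely, for the negative-root vector one checks that $w_{(0,-\alpha)}^\circ$ equals the $\widetilde\Delta_{(c,d)}$-element for $X = -\sum_{m=1}^{p_c(\alpha)} e_{\tau_c^m(\alpha)}$ minus an element $p_{(0,-\alpha)}=\jmath\bigl(\mathrm{Ad}_{(c,d)}(X)\bigr)\in\lieP$, the point being that $X^\sharp + e_\alpha = -\sum_{m=1}^{p_c(\alpha)-1} e_{\tau_c^m(\alpha)}$ must coincide with the constant term of $\mathrm{Ad}_{(c,d)}(X)$.

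The hard part, and the only place where $\gcd(n,c)=1$ and the definition of $p_c(\alpha)$ truly enter, is the combinatorial lemma governing the $z$-grading along the orbit. Writing $\mathrm{Ad}_{(c,d)}(e_{pq}) = z^{s(p,q)}e_{pq}$ with $s(p,q)\in\{-1,0,1\}$ determined by the positions of $p,q$ relative to the cut at $c$, I must prove that $s(\tau_c^m(\alpha)) = 0$ for $1\le m\le p_c(\alpha)-1$ and $s(\tau_c^{p_c(\alpha)}(\alpha)) = +1$. This is exactly what makes the interior of the chain have $z$-degree $0$ (yielding the terms $e_{\tau_c^m(\alpha)}\wedge e_{-\alpha}$ via the matching with $\jmath$) and the exit step have $z$-degree $+1$ (yielding $x\,e_{\tau_c^{p_c(\alpha)}(\alpha)}\otimes e_{-\alpha}$ from the $k=1$ vector and $-y\,e_{-\alpha}\otimes e_{\tau_c^{p_c(\alpha)}(\alpha)}$ from the degree-one part of $p_{(0,-\alpha)}$). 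The statement amounts to saying that, along the shift $(i,j)\mapsto(i+c,j+c)\bmod n$, membership in $\Phi_+$ coincides (for $m\ge 1$) with the two indices lying in a common block, and that the first exit from $\Phi_+$ crosses the block boundary; this is a finite but delicate verification. I would finish by grouping all contributions by $\tau_c$-orbits and cross-referencing them (the positive halves $+\sum_{m} e_{\tau_c^m(\alpha)}\otimes e_{-\alpha}$ arising from the vectors $w_{(0,\tau_c^m(\alpha))}^\circ$ with $1\le m\le p_c(\alpha)-1$, and the $x\,e_{\tau_c^{p_c(\alpha)}(\alpha)}\otimes e_{-\alpha}$ term from the appropriate $k=1$ vector), which reassembles precisely $p_{(c,d)}(x,y)$; as a sanity check I would confirm the output against the explicit $n=3$ expressions given after Theorem A.
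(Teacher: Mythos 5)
Your proposal is correct and follows essentially the same route as the paper: reduce via (\ref{E:FromOrderToCYBE}) to the levels $k=0,1$, handle the Cartan part through Lemma \ref{L:trivia}, and exhibit the dual vectors as explicit elements of $\widetilde\Delta_{(c,d)}$ built from chains of matrix units along the $\tau_c$--orbit, with the $z$--degree jump at the exit step producing the linear terms. The only cosmetic difference is that the paper invokes the skew--symmetry of $p_{(c,d)}$ to conclude from the computation of $w_{(0,-\alpha)}$ and $w_{(0,i)}$ alone (listing the remaining dual vectors only for completeness), whereas you propose to compute all of them and cross--reference by orbits; the combinatorial degree claim you isolate as the ``hard part'' is exactly the fact the paper uses implicitly, and it does admit the short block--wrapping verification you describe.
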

\begin{remark}
To simplify the notation, we omit  the subscript $c$ when writing  $\tau_c$, $p_c(\alpha)$, $q_{i, c}$, $w_{i, c}$ and $f_{i, c}$.
In the terms of Theorem A from the Introduction, we have: $r_{(c, d)}(x, y) = r_c(x, y)$ and $p_{(c,d)}(x, y) = u_c(x, y) + t_c$.
\end{remark}
\begin{proof} We have formula (\ref{E:FromOrderToCYBE}) for the solution $r_{\lieW}$ corresponding to a Lagrangian order $\lieW$ transversal to $\lieP$.
Therefore, it is sufficient to determine the elements $w_{(k, \beta)}$ and $p_{(k, \beta)}$ for each $(k, \beta) \in \widehat\Phi$. Since
$z^{-2} \lieg \llbracket z^{-1}\rrbracket \times \{0\} \subset \lieW_{(c,d)}$, we have:
$$
w_{(k, \beta)} = w_{(k, \beta)}^\circ \;\;  \mbox{\rm and}\;\;   p_{(k, \beta)} = 0 \;\;  \mbox{\rm  for all} \; \;  (k, \beta) \in \widehat\Phi \;\;  \mbox{\rm such that}\; \;  k \ge 2.
$$
Therefore, it is sufficient to determine $w_{(k, \beta)}$ and $p_{(k, \beta)}$ for $k = 0, 1$.

\smallskip
\noindent
\underline{Claim 1}. For any $\alpha \in \Phi_+$ we have: $w_{(0, -\alpha)}  := \bigl(e_\alpha, e_\alpha)^\ast = $
\begin{equation*}
\begin{split}
- \Bigl(\bigl(e_{\tau(\alpha)}, e_\alpha\bigr) + \bigl(e_{\tau^2(\alpha)}, e_{\tau(\alpha)}\bigr)  + \dots + \bigl(e_{\tau^{p(\alpha)-1}(\alpha)}, e_{\tau^{p(\alpha)-2}(\alpha)}\bigr) + \bigl(z e_{\tau^{p(\alpha)}(\alpha)}, e_{\tau^{p(\alpha)-1}(\alpha)}\Bigr) \\
= (0, -e_{\alpha}) - \bigl(z e_{\tau^{p(\alpha)}(\alpha)}, 0\bigr) - \sum\limits_{k = 1}^{p(\alpha)}\bigl(e_{\tau^k(\alpha)}, e_{\tau^k(\alpha)}\bigr) =
w_{(0, -\alpha)}^\circ + p_{(0, -\alpha)}.
\end{split}
\end{equation*}
Indeed, we have: $p_{(0, -\alpha)} \in \lieP$, the identify $w_{(0, -\alpha)} =  w_{(0, -\alpha)}^\circ + p_{(0, -\alpha)}$ is true and
$w_{(0, -\alpha)} \in \widetilde\Delta_{(c,d)} \subset \lieW_{(c, d)}$. This implies the claim.

\smallskip
\noindent
\underline{Claim 2}. For any $1 \le i \le n-1$ we have: $w_{(0, i)}  := \bigl(h_i^\ast, h_i^\ast)^\ast = \bigl(\tau(w_i), w_i\bigr)$ and
$$
\bigl(\tau(w_i), w_i\bigr) = \frac{1}{2}\bigl(h_i, -h_i\bigr) + \bigl(f_i, f_i\bigr).
$$
This result is a consequence of the third part of Lemma \ref{L:trivia}.

\smallskip
\noindent
Since the tensor $p_{(c,d)}(x, y)$ is skew--symmetric (i.e.~$p_{(c,d)}^{12}(x, y) = -p_{(c,d)}^{21}(y, x)$), Claim 1 and Claim 2 actually imply the statement of the theorem. For the sake of completeness, we shall give a description of the remaining elements of the dual topological basis of $\lieW_{(c, d)}$.

\smallskip
\noindent
Let $\kappa = \tau^{-1}: \overline\Phi \lar \overline\Phi$, i.e.~$\kappa(i, j) = (i + d, j +d)$.
Note that $w_{(1, \beta)} = \bigl(z g_\beta, 0)^\ast = \bigl(z^{-1} g_\beta^\ast, 0\bigr)$ for any $\beta \in \Phi$ but $\beta = -\alpha$ for $\alpha \in \Phi_+$ such that $\kappa(\alpha) \in \Phi_-$.

\smallskip
\noindent
\underline{Claim 3}. Let $\alpha \in \Phi_+$ such that $\kappa(\alpha) \in \Phi_-$. Let
$$
q(\alpha) := \max\bigl\{k \in \NN \, \big|\; \kappa(\alpha), \dots, \kappa^k(\alpha) \in \Phi_-\bigr\}.
$$
Then we have: $w_{(1, -\alpha)}  := \bigl(z e_{-\alpha}, 0)^\ast =
\bigl(z^{-1} e_{\alpha}, 0\bigr) + \sum\limits_{k = 1}^{q(\alpha)} \bigl(e_{\kappa^k(\alpha)}, e_{\kappa^k(\alpha)}\bigr).
$

\smallskip
\noindent
\underline{Claim 4}. For  any $\beta \in \Phi_-$ we put:
$
t(\beta) := \max\bigl\{k \in \NN \, \big|\; \kappa(\beta), \dots, \kappa^k(\beta) \in \Phi_-\bigr\}.
$
Then we have: $w_{(0, \beta)} = \bigl(e_{-\beta}, e_{-\beta}\bigr)^\ast = \bigl(e_\beta, 0) + \sum\limits_{k = 1}^{t(\beta)}
\bigl(e_{\kappa^k(\beta)}, e_{\kappa^k(\beta)}\bigr)$.
\end{proof}

\section{Computation of the geometric $r$--matrix for $\bigl(E, \kA_{(n,d)}\bigr)$}

\smallskip
\noindent
We keep  the notation of Section \ref{S:BundlesNodalCurve}. In particular, $E$ is a nodal Weierstra\ss{} cubic, $\PP^1 \stackrel{\nu}\lar E$ its normalization, $c,d \in \NN$ are mutually prime and such that  $n = c + d$, and  $\kA = \kA_{(n,d)}$. In what follows, we shall identify
$x \in \CC^\ast$ with the corresponding smooth point $\nu(1:x)$ of the curve $E$. In the notation made, $\omega = \dfrac{dz}{z}$ is a generator of the
space $\Gamma(E, \Omega)$, where $\Omega$ is the sheaf of Rosenlicht--regular differential $1$--forms on $E$.

\smallskip
\noindent
For any $x \ne y \in \breve{E}$, consider the linear map $\kA\big|_{x} \xrightarrow{\rho^{\sharp}(x, y)} \kA\big|_{y}$ making the following diagram  of vector spaces
\begin{equation}\label{E:rmatrixandmaps}
\begin{array}{c}
\xymatrix{
 & \Gamma\bigl(X, \kA(x)\bigr) \ar[ld]_-{\res_{x}^\omega} \ar[rd]^-{\ev_{y}}& \\
 \kA\big|_{x} \ar[rr]^-{\rho^{\sharp}(x, y)} & & \kA\big|_{y}
}
\end{array}
\end{equation}
commutative, where $\mathsf{res}^\omega_x$ and $\mathsf{ev}_y$ are the canonical residue and evaluation maps; see
\cite[Definition 3.13]{BurbanGalinat}.
Then  the  tensor $\rho(x, y) \in \kA\big|_{x} \otimes \kA\big|_{y}$ (which is the value of the geometric $r$--matrix $\rho$  at the point $(x, y)$) is the image of the linear map $\rho^{\sharp}(x, y)$ under the isomorphism
$$\Hom_{\CC}\bigl(\kA\big|_{x}, \kA\big|_{y}\bigr) \lar \kA\big|_{x} \otimes \kA\big|_{y}$$ induced by the trace  form on the fiber $\kA\big|_{x}$.

 Next, it was explained in \cite[Subsection 5.1.4]{BK4} and \cite[Corollary 6.5]{BH} that a choice of homogeneous coordinates on $\PP^1$ together with  a choice of trivializations $\kO_{\PP^1}(k)\Big|_{Z} \stackrel{\xi_k}\lar \kO_{Z}$ specify   a trivialization $\Gamma(\breve{E}, \kA) \stackrel{\xi}\lar \lieg \otimes \Gamma\bigl(\breve{E}, \kO_E\bigr)$ and an embedding
$\Gamma\bigl(E, \kA(x)\bigr) \stackrel{\bar\xi}\lar \lieg[z]$
such that the following diagram of vector spaces
\begin{equation}
\begin{array}{c}
\xymatrix{
\kA\big|_{x} \ar[d]_-{\xi_x} & \Gamma\bigl(E, \kA(x)\bigr) \ar[r]^-{\mathsf{ev}_y} \ar[l]_-{\mathsf{res}^\omega_x} \ar[d]^-{\bar\xi} & \kA\big|_{y} \ar[d]^-{\xi_y} \\
\lieg  & \mathsf{Sol} \ar[r]^{\overline{\mathsf{ev}}_y} \ar[l]_{\overline{\mathsf{res}}_x} & \lieg
}
\end{array}
\end{equation}
is commutative, where
\begin{itemize}
\item The vector space $\mathsf{Sol} = \mathsf{Sol}\bigl((c,d), x\bigr):= \mathsf{Im}(\bar\xi) \subset \lieg[z]$ has the following description:
\begin{equation}\label{E:SolNode}
\mathsf{Sol} = \left\{\left.
F(z) = \left(
\begin{array}{c|c}
A_0 + z A_1 & B \\
\hline
C_0 + zC_1 + z^2 C_2 & D_0 + z D_1
\end{array}
\right)
 \; \right| \;
F_0 = -x F_\infty^\sharp
\right\},
\end{equation}
where
$
F_0 = \left(
\begin{array}{c|c}
A_0  & B \\
\hline
C_0  & D_0
\end{array}
\right)$ and $
F_\infty = \left(
\begin{array}{c|c}
A_1  & B \\
\hline
C_2  & D_1
\end{array}
\right).
$
\item For $F  \in \mathsf{Sol}$ we put:
$
\overline{\mathsf{ev}}_y(F) = \dfrac{1}{y-x} F(y)$ and
$
\overline{\mathsf{res}}_x(F) =
\dfrac{1}{x} F(x).
$
\end{itemize}
Let $r = \rho^\xi: \CC^\ast \times \CC^\ast \lar \lieg \otimes \lieg$ be the solution of the classical Yang--Baxter equation
(\ref{E:CYBE}),  given by  the geometric $r$--matrix $\rho$ trivialized by $\xi$. Then we get the following recipe to compute  $r(x,y)$  for $x \ne y \in \CC^*$: it is the image of the linear map
\begin{equation}\label{E:rsharp}
r_{x,y}^\sharp:= \overline{\ev_y} \circ \overline{\res}_x^{-1} \in \Hom_{\CC}(\lieg, \lieg)
\end{equation}  under the canonical isomorphism
$\Hom_{\CC}(\lieg, \lieg) \lar \lieg \otimes \lieg$,
induced by the trace form.

\smallskip
\noindent
Recall that we have two mutually inverse automorphisms $\tau, \kappa \in \Aut_\CC(\lieg)$ given by the formulae $\tau:= \mathrm{Ad}_{K_{(c,d)}}$ and
$\kappa:= \mathrm{Ad}_{J_{(c,d)}}$, respectively.

\begin{proposition}\label{P:keydecomp} For any $X \in \lieg$, there exists a uniquely determined
$$
Y = \bigl(Y', Y''\bigr) = \left(\left(
\begin{array}{c|c}
A & 0 \\
\hline
Z & D
\end{array}
\right),
\left(
\begin{array}{c|c}
A & 0 \\
\hline
C & D
\end{array}
\right)
\right) \in \nabla_{(c, d)}
$$
such that
$
X = Y' - \kappa(Y'') =
\left(
\begin{array}{c|c}
A & 0 \\
\hline
Z & D
\end{array}
\right) -
\left(
\begin{array}{c||c}
D & C \\
\hline
\hline
0 & A
\end{array}
\right).
$
Moreover, if $X \in \lieg_\pm$ (respectively, $X \in \lieh$) then we have: $Y', \kappa(Y'') \in \lieg_\pm$ (respectively, $Y', \kappa(Y'') \in \lieh$).
\end{proposition}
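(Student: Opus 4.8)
The plan is to interpret the asserted decomposition as the inversion of a single linear map and to reduce the required bijectivity to Proposition~\ref{P:maindecomp}. Define the $\CC$--linear map $\Psi\colon \nabla_{(c,d)} \to \lieg$ by $\Psi(Y',Y'') = Y' - \kappa(Y'')$, where a pair is written $Y' = \left(\begin{smallmatrix} A & 0 \\ Z & D\end{smallmatrix}\right)$, $Y'' = \left(\begin{smallmatrix} A & 0 \\ C & D\end{smallmatrix}\right)$. Recalling from (\ref{E:Xsharp}) that $\kappa(X) = J_{(c,d)} X J_{(c,d)}^{-1} = X^\sharp$, so that $\kappa(Y'') = (Y'')^\sharp = \left(\begin{smallmatrix} D & C \\ 0 & A\end{smallmatrix}\right)$, the proposition is precisely the statement that $\Psi$ is bijective. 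By Lemma~\ref{L:AlgebraNabla} we have $\dim_\CC \nabla_{(c,d)} = \dim_\CC \lieg = n^2-1$, so it suffices to prove that $\Psi$ is injective.

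First I would identify the kernel. Suppose $(Y',Y'') \in \nabla_{(c,d)}$ satisfies $Y' = \kappa(Y'') = (Y'')^\sharp$. Since $\nabla_{(c,d)}$ is by definition stable under interchanging the two components of a pair (both carry the same diagonal blocks $A$ and $D$), the swapped pair $(Y'',Y')$ again lies in $\nabla_{(c,d)}$; and as its second entry is $(Y'')^\sharp$, it lies in $\Delta_{(c,d)}$ as well. Proposition~\ref{P:maindecomp} gives $\Delta_{(c,d)} \cap \nabla_{(c,d)} = \{0\}$, forcing $Y' = Y'' = 0$. Hence $\ker\Psi = 0$, so $\Psi$ is an isomorphism, which establishes the existence and uniqueness of $Y$.

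For the refinement I would split $\nabla_{(c,d)}$ compatibly with the triangular decomposition $\lieg = \lieg_- \oplus \lieh \oplus \lieg_+$. Let $\nabla_+$ (resp.\ $\nabla_-$) consist of those pairs with $A,D$ strictly upper (resp.\ lower) triangular, $Z = 0$ and $C$ arbitrary (resp.\ $C = 0$ and $Z$ arbitrary), and let $\nabla_0$ consist of those with $A,D$ diagonal and $Z = C = 0$. Inspecting the block forms $Y' = \left(\begin{smallmatrix} A & 0 \\ Z & D\end{smallmatrix}\right)$ and $\kappa(Y'') = \left(\begin{smallmatrix} D & C \\ 0 & A\end{smallmatrix}\right)$ as full $n\times n$ matrices — the block $C$ sitting strictly above, and $Z$ strictly below, the main diagonal — one reads off $\Psi(\nabla_\pm) \subseteq \lieg_\pm$ and $\Psi(\nabla_0) \subseteq \lieh$. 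A direct count gives $\dim \nabla_\pm = \binom{c}{2} + \binom{d}{2} + cd = \binom{n}{2} = \dim \lieg_\pm$ and $\dim \nabla_0 = (c+d) - 1 = n-1 = \dim \lieh$ (the single trace constraint cutting $\nabla_0$ down to the diagonal of $\lieg$), and the decomposition of an arbitrary pair into its upper, diagonal and lower parts shows $\nabla_{(c,d)} = \nabla_- \oplus \nabla_0 \oplus \nabla_+$. Since the isomorphism $\Psi$ maps each summand into the corresponding summand of $\lieg$ with matching dimensions, it restricts to isomorphisms $\nabla_\pm \stackrel{\sim}{\lar} \lieg_\pm$ and $\nabla_0 \stackrel{\sim}{\lar} \lieh$. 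Thus the unique preimage of $X \in \lieg_\pm$ lies in $\nabla_\pm$ and that of $X \in \lieh$ lies in $\nabla_0$, which is exactly the claimed membership $Y', \kappa(Y'') \in \lieg_\pm$, respectively $\in \lieh$.

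The only genuinely delicate point is the injectivity of $\Psi$, and the reduction above routes it straight to Proposition~\ref{P:maindecomp}, whose own proof rests on the geometric vanishing $\Hom_E(\kO_E, \kA) = 0$; everything else is elementary block-matrix bookkeeping and dimension counting.
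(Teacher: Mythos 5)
Your proof is correct, and for the second half it takes a genuinely different route from the paper. The paper disposes of existence and uniqueness in one line (``follows from the direct sum decomposition $\lieg \times \lieg = \Delta_{(c,d)} \dotplus \nabla_{(c,d)}$'') and then proves the refinement \emph{constructively}, by writing down the explicit decomposition of each basis vector $q_i$, $e_\alpha$ ($\alpha \in \Phi_+$) and $e_\beta$ ($\beta \in \Phi_-$) in terms of the orbits of $\tau$ and $\kappa$. You instead package everything into the linear map $\Psi(Y',Y'') = Y' - \kappa(Y'')$ and argue structurally: injectivity via $\Delta_{(c,d)} \cap \nabla_{(c,d)} = 0$, surjectivity by dimension count, and the refinement by exhibiting a splitting $\nabla_{(c,d)} = \nabla_- \oplus \nabla_0 \oplus \nabla_+$ matched to the triangular decomposition of $\lieg$. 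Two remarks on the trade-off. First, your treatment of injectivity is actually \emph{more} careful than the paper's: the kernel of $(u,v) \mapsto u - \kappa(v)$ on $\lieg \times \lieg$ is $\{(\kappa(v),v)\}$, which is the \emph{swap} of $\Delta_{(c,d)} = \{(v,\kappa(v))\}$ and not $\Delta_{(c,d)}$ itself (since $\kappa^2 \neq \mathrm{id}$ in general); the swap--invariance of $\nabla_{(c,d)}$ that you invoke is exactly the missing link, and the paper glosses over it. Second, what your argument does not produce is the closed-form decomposition of the basis vectors, and those formulae are precisely what the paper feeds into Proposition \ref{P:preimagesresidue} and the proof of Theorem \ref{T:GeomRmatrExplicit}; so while your proof fully establishes the proposition as stated, the explicit computation would still have to be carried out separately for the subsequent sections to go through.
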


\begin{proof}
Existence and uniqueness of such $Y \in \nabla_{(c, d)}$ follows from the direct sum decomposition $\lieg \times \lieg = \Delta_{(c, d)} \dotplus
\nabla_{(c, d)}$; see Proposition \ref{P:maindecomp}. However, one can be more precise. Let
$
\bigl\{g_\beta\bigr\}_{\beta \in \Phi} = \bigl\{e_\alpha\bigr\}_{\alpha \in \Phi_+} \cup \bigl\{q_i\bigr\}_{i \in \Phi_0} \cup
\bigl\{e_{-\alpha}\bigr\}_{\alpha \in \Phi_+}
$
be the same basis as in Theorem \ref{T:maincomputation}. Then the following formulae are true:
$$
q_i = \tau(w_i) - w_i = \tau(w_i) - \kappa\bigl(\tau(w_i)\bigr) \quad \mbox{\rm for any}\quad 1 \le i \le n-1,
$$
$$
	e_\alpha = -\kappa \Bigl(\sum\limits_{l= 1}^{p(\alpha)}- e_{\tau^l(\alpha)}\Bigr) +
 \Bigl(\sum\limits_{l= 1}^{p(\alpha)-1}-e_{\tau^l(\alpha)}\Bigr) \quad \mbox{\rm for any}\quad \alpha \in \Phi_+
$$
and
$$
e_\beta = \Bigl(\sum\limits_{l= 0}^{t(\beta)-1} e_{\kappa^l(\beta)}\Bigr) + e_{\kappa^{t(\beta)}(\beta)} -
\kappa\Bigl(\sum\limits_{l= 0}^{t(\beta)-1}e_{\kappa^l(\beta)}\Bigr) \quad \mbox{\rm for any}\quad \beta \in \Phi_-.
$$
These formulae imply the result.
\end{proof}
\begin{proposition}\label{P:preimagesresidue}
Let $X =
\left(
\begin{array}{c|c}
M & N \\
\hline
K & L
\end{array}
\right) \in \lieg
$ be any element and
$$
X = \left(
\begin{array}{c|c}
A & 0 \\
\hline
Z & D
\end{array}
\right) -
\left(
\begin{array}{c||c}
D & C \\
\hline
\hline
0 & A
\end{array}
\right) \quad \mbox{\rm and}\quad
\left(
\begin{array}{c|c}
0 & N \\
\hline
0 & 0
\end{array}
\right)
= -\left(
\begin{array}{c|c}
A' & 0 \\
\hline
0 & D'
\end{array}
\right) +
\left(
\begin{array}{c||c}
D' & C' \\
\hline
\hline
0 & A'
\end{array}
\right)
$$
be the corresponding decompositions from Proposition \ref{P:keydecomp}. Let
$$
F(z) :=
-x
\left(
\begin{array}{c||c}
D + x D' & C \\
\hline
\hline
xN & A + x A'
\end{array}
\right)+
z \left(
\begin{array}{c|c}
A + xA' & 0 \\
\hline
Z + x(C'-C) & D + xD'
\end{array}
\right) +
z^2
\left(
\begin{array}{c|c}
0 & 0 \\
\hline
C & 0
\end{array}
\right).
$$
Then we have: $\overline{\res}_x^{-1}(X) = F(z)$.
\end{proposition}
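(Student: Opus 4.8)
The plan is to exploit that $\overline{\res}_x\colon \Sol \to \lieg$ is a $\CC$-linear isomorphism; this is implicit in the commutative diagram above, since the residue map $\res^\omega_x$ is an isomorphism onto the fibre $\kA\big|_x \cong \lieg$ (it is precisely the map inverted in the definition of $r^\sharp_{x,y}$). Consequently, to prove $\overline{\res}_x^{-1}(X) = F(z)$ it suffices to verify the two independent assertions: (a) $F(z) \in \Sol$, and (b) $\overline{\res}_x(F) = \tfrac{1}{x}F(x) = X$, the latter being equivalent to $F(x) = xX$.

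For (a), I would first read off the coefficients of $1, z, z^2$ from the displayed formula and observe that $F(z)$ has exactly the block shape prescribed in (\ref{E:SolNode}): the $z^2$-term is supported only in the lower-left block (where it equals $C$), the upper-right block is constant, and the two diagonal blocks together with the upper-right block have degree at most one. It then remains to check the gluing relation $F_0 = -x\,F_\infty^\sharp$. Since $F_0 = F(0)$ is exactly $-x$ times the first, $\sharp$-partitioned, matrix, this reduces to identifying that matrix with $\kappa(F_\infty)$, where $\kappa = \mathrm{Ad}_{J_{(c,d)}}$. Comparing the four $\sharp$-blocks, three match tautologically; the only substantial point is that the lower-left $\sharp$-block of $\kappa(F_\infty)$ equals $xN$, which I would deduce by comparing the upper-right blocks occurring in the two decompositions of Proposition \ref{P:keydecomp}.

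For (b), I would substitute $z = x$ and sort by powers of $x$. Splitting the first matrix as $\kappa(Y'') + x\cdot(\text{its }x\text{-linear part})$ and using the first decomposition in the form $\kappa(Y'') = Y' - X$, together with the analogous splitting of the $z$-coefficient as $Y' + x\cdot(\text{its }x\text{-linear part})$, the contributions linear in $x$ coming from $Y'$ cancel, leaving $F(x) = xX + x^2 R$, where $R$ is a single matrix assembled from the primed data $A', C', D'$ and from $C$. The decisive step is that this correction $R$ vanishes: this is precisely the content of the \emph{second} decomposition, that of $\bigl(\begin{smallmatrix} 0 & N \\ 0 & 0\end{smallmatrix}\bigr)$, reorganised through $\kappa$, so that the $x^2$-term drops out and $F(x) = xX$ as required.

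I expect the crux, and the main source of difficulty, to be purely organisational. The three summands of $F(z)$ are not written in a common block partition: the first lives in the $\sharp$-partition, whose diagonal blocks have sizes $d$ and $c$, whereas the other two live in the standard partition, with sizes $c$ and $d$. Because $c \neq d$ in general, they cannot be combined blockwise, and every step of both (a) and (b) forces one to translate between the two partitions via $\kappa$ while invoking \emph{both} decompositions of Proposition \ref{P:keydecomp} at once. Getting this interplay right — in particular the exact placement and role of the correction terms $A', C', D'$, which is what makes the gluing identity and the vanishing of the $x^2$-term hold simultaneously — is the entire content of the verification.
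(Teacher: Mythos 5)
Your proposal is correct and takes essentially the same route as the paper: the paper likewise reduces the claim to the two verifications $F(z)\in\Sol\bigl((c,d),x\bigr)$ and $\overline{\res}_x(F)=\tfrac{1}{x}F(x)=X$, identifies the constant upper--right block as $xN$ (hence $F_0=-xF_\infty^\sharp$) by translating the $\sharp$--partitioned term through the two decompositions of Proposition \ref{P:keydecomp}, and kills the $x^2$--term of $F(x)$ using precisely the decomposition of the matrix with $N$ in the upper--right corner. There is no gap.
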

\begin{proof}
We have to show that $\overline{\res}_x\bigl(F(z)\bigr) = X$ and $F(z) \in \Sol\bigl((c, d), x\bigr)$. Indeed,
$$
\overline{\res}_x\bigl(F(z)\bigr) = \frac{F(x)}{x} = \left(
\begin{array}{c|c}
A & 0 \\
\hline
Z & D
\end{array}
\right) -
\left(
\begin{array}{c||c}
D & C \\
\hline
\hline
0 & A
\end{array}
\right) + x\left(\left(
\begin{array}{c|c}
A' & 0 \\
\hline
C' & D'
\end{array}
\right) -
\left(
\begin{array}{c||c}
D' & 0 \\
\hline
\hline
N & A'
\end{array}
\right)\right) = X.
$$
Next, $F_0 = - x \left(
\begin{array}{c||c}
D + xD' & C \\
\hline
\hline
xN & A + xA'
\end{array}
\right)$. To compute $F_\infty$, first note that
$$
F_0 =
- x^2 \left(
\begin{array}{c||c}
D' & 0\\
\hline
\hline
N & A'
\end{array}
\right)  - x
\left(
\begin{array}{c||c}
D & C\\
\hline
\hline
0 & A
\end{array}
\right) = -x^2 \left(
\begin{array}{c|c}
A' & 0 \\
\hline
C' & D'
\end{array}
\right) - x
\left(
\begin{array}{c|c}
A & 0 \\
\hline
C & D
\end{array}
\right)
+ x
\left(
\begin{array}{c|c}
M & N \\
\hline
K & L
\end{array}
\right).
$$
Therefore, $F_\infty = \left(
\begin{array}{c|c}
A + xA'& xN \\
\hline
C & D + xD'
\end{array}
\right)$. We see that $F_0 = - xF_\infty^\sharp$, hence $F(z) \in \Sol\bigl((c, d), x\bigr)$, as asserted.
\end{proof}
\smallskip
\noindent
The following theorem is the main result of this section.
\begin{theorem}\label{T:GeomRmatrExplicit}
Let $r := \rho^{\xi}: \CC^+ \times \CC^\ast \lar \lieg \otimes \lieg$ be the trivialization of the geometric $r$--matrix, attached to the pair
$\bigl(E, \kA_{(n,d)}\bigr)$ (where $\xi$ is the trivialization of the sheaf of Lie algebras $\kA_{(n, d)}$ from the beginning of this section). Then $r(x,y)$ is given by the formula
(\ref{E:finalformula1}). In other words, $r(x,y)$ is the quasi--trigonometric solution attached to the order $\lieW_{(c, d)}$ given by formula
(\ref{E:CremmerGervaisOrder}).
\end{theorem}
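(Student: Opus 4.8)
The plan is to evaluate the geometric $r$--matrix directly from the recipe (\ref{E:rsharp}) and to match the outcome with formula (\ref{E:finalformula1}) of Theorem \ref{T:maincomputation}. Recall that $r(x,y)$ is the image, under the trace form on the first fibre $\kA\big|_x$, of the linear map $r_{x,y}^\sharp = \overline{\ev}_y \circ \overline{\res}_x^{-1} \in \Hom_\CC(\lieg, \lieg)$. The hard work has already been done in Proposition \ref{P:preimagesresidue}, which for every $X \in \lieg$ exhibits the preimage $\overline{\res}_x^{-1}(X) = F(z) \in \Sol\bigl((c,d), x\bigr)$ as an explicit polynomial of degree two in $z$. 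Since $\overline{\ev}_y(F) = \tfrac{1}{y-x} F(y)$, I obtain at once
$$
r_{x,y}^\sharp(X) = \frac{1}{y-x}\, F(y),
$$
a matrix depending rationally on $(x,y)$, and the whole task reduces to reading off the associated tensor and comparing coefficients.

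First I would assemble the tensor via the graph construction $r(x,y) = \sum_{\beta \in \Phi} g_\beta^\ast \otimes r_{x,y}^\sharp(g_\beta)$, where $\{g_\beta^\ast\}$ is the trace--dual basis of $\{g_\beta\}$, so that $(e_\alpha)^\ast = e_{-\alpha}$, $(e_{-\alpha})^\ast = e_\alpha$ and $(q_i)^\ast = q_i^\ast$; one checks that this convention sends $\mathrm{id}_{\lieg}$ to the Casimir $\gamma$. By skew--symmetry of the geometric $r$--matrix (Theorem \ref{T:GeoemtryCYBE}(2)), it then suffices to determine the contributions of $g_\beta$ for $\beta \in \Phi_+ \cup \Phi_0$; the remaining terms are forced by $r^{12}(x,y) = -r^{21}(y,x)$. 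This halves the computation and lets me recover $p_{(c,d)}$ from its ``upper'' part alone, exactly as in the proof of Theorem \ref{T:maincomputation}.

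Second, I would evaluate $r_{x,y}^\sharp$ on the families $g_\beta = e_\alpha$ ($\alpha \in \Phi_+$) and $g_\beta = q_i$ ($1 \le i \le n-1$) by substituting into $F(z)$ the data $A, C, D, Z$ and $A', C', D'$ furnished by Proposition \ref{P:keydecomp}. The point is that these data are governed by precisely the $\tau$-- and $\kappa$--chains $\alpha, \tau(\alpha), \dots, \tau^{p(\alpha)}(\alpha)$ that already appear in the proof of Theorem \ref{T:maincomputation}. Collecting the constant, linear and quadratic parts of $F(y)$ — which carry, respectively, the factors $-x$, $y$ and $y^2$ — and dividing by $y-x$, one sees the chain of root vectors $e_{\tau^k(\alpha)}$ organize into the wedge terms $\sum_{k=1}^{p(\alpha)-1} e_{\tau^k(\alpha)} \wedge e_{-\alpha}$, while the top $z$-- and $z^2$--coefficients produce the monomials $x\,e_{\tau^{p(\alpha)}(\alpha)} \otimes e_{-\alpha}$ and $-y\,e_{-\alpha} \otimes e_{\tau^{p(\alpha)}(\alpha)}$; the Cartan family reproduces $\tfrac12 \tfrac{y+x}{y-x}\gamma$ together with $\sum_{i} q_i^\ast \otimes f_i$. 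Summing these contributions gives $r_{\mathsf{st}}(x,y) + p_{(c,d)}(x,y)$, which is (\ref{E:finalformula1}).

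The main obstacle is the bookkeeping underlying this matching. One must track carefully how the defining constraint $F_0 = -x F_\infty^\sharp$ of $\Sol$ in (\ref{E:SolNode}) — equivalently, the twist relating $F_0$ and $F_\infty$ by $\tau = \mathrm{Ad}_{K_{(c,d)}}$ — distributes the powers of $x$ and $y$ across the members of each $\tau$--orbit, and one must verify that the integers $p(\alpha)$, $t(\beta)$ and $q(\alpha)$ marking where a chain leaves $\Phi_+$ (respectively $\Phi_-$) agree on the geometric side with those occurring in Theorem \ref{T:maincomputation}. Making the parallel between the $\nabla_{(c,d)}$--decomposition of Proposition \ref{P:keydecomp} and the dual--basis computation of Theorem \ref{T:maincomputation} precise is the crux; once the two chain structures are identified, the equality of the two tensors follows by a routine, if lengthy, comparison of coefficients.
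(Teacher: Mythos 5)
Your proposal is correct and follows essentially the same route as the paper: compute $\overline{\res}_x^{-1}(g_\beta)$ for the standard basis via Propositions \ref{P:keydecomp} and \ref{P:preimagesresidue}, apply $\overline{\ev}_y$, dualize with the trace form, and compare coefficients with (\ref{E:finalformula1}) by matching the $\tau$-- and $\kappa$--chains. The only cosmetic difference is that you invoke skew--symmetry to skip the $\Phi_-$ family, whereas the paper writes out those terms explicitly; both are fine.
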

\begin{proof} Let $
\bigl\{g_\beta\bigr\}_{\beta \in \Phi}$ be the same basis of the Lie algebra $\lieg$ as in Proposition \ref{P:keydecomp} and Theorem \ref{T:maincomputation}. We put: $F_{\beta,x}(z) := \overline{\res}_x^{-1}(g_\beta) \in \Sol\bigl((c, d), x\bigr)$. Then we have:  $r_{x, y}^\sharp(g_\beta) = \dfrac{1}{y-x}F_{\beta, x}(y)$, where $r_{x,y} \in \Hom_{\CC}(\lieg, \lieg)$ is the linear map defined by (\ref{E:rsharp}).  Therefore, we have the following formula:
$
r(x,y) = \dfrac{1}{y-x} \sum\limits_{\beta \in \Phi} g_\beta^\ast \otimes F_{\beta, x}(y).
$
Taking into account the explicit formulae for the elements $F_{\beta, x}(z)$, following from Proposition \ref{P:keydecomp} and Proposition \ref{P:preimagesresidue}, we arrive at the following formulae:
\begin{align*}
	\overline{\res}_x^{-1}(e_\alpha) &= x\kappa \Bigl(\sum\limits_{l= 1}^{p(\alpha)} e_{\tau^l(\alpha)}\Bigr) +z\Bigl(\sum\limits_{l= 1}^{p(\alpha)-1}-e_{\tau^l(\alpha)}\Bigr) +xz e_{\tau^{p(\alpha)}(\alpha)} - z^2 e_{\tau^{p(\alpha)}(\alpha)}\\
	r^\sharp_{x,y}(e_\alpha) &= \frac{x}{y-x} e_\alpha - \Bigl(\sum\limits_{l= 1}^{p(\alpha)-1}e_{\tau^l(\alpha)}\Bigr) - y e_{\tau^{p(\alpha)}(\alpha)}\\
	&= \frac{1}{2}\frac{y+x}{y-x}e_\alpha - \frac{1}{2}e_\alpha - \Bigl(\sum\limits_{l= 1}^{p(\alpha)-1}e_{\tau^l(\alpha)}\Bigr) - y e_{\tau^{p(\alpha)}(\alpha)} \quad \mbox{\rm for any}\; \alpha \in \Phi_+, \; \kappa(\alpha) \notin \Phi_- \\
	\overline{\res}_x^{-1}(e_\alpha) &=  x\kappa \Bigl(\sum\limits_{l= 1}^{p(\alpha)} e_{\tau^l(\alpha)}\Bigr)-x^2\Bigl(\sum\limits_{l= 1}^{t(\kappa(\alpha))} e_{\kappa^l(\alpha)}\Bigr) + xz e_{\tau^{p(\alpha)}(\alpha)} +z\Bigl(\sum\limits_{l= 1}^{p(\alpha)-1}-e_{\tau^l(\alpha)}\Bigr) \\&+xz \Bigl(\sum\limits_{l= 1}^{t(\kappa(\alpha))} e_{\kappa^l(\alpha)}\Bigr) -z^2 e_{\tau^{p(\alpha)}(\alpha)}\\
r^\sharp_{x,y}(e_\alpha) &= \frac{x}{y-x} e_\alpha - \Bigl(\sum\limits_{l= 1}^{p(\alpha)-1}e_{\tau^l(\alpha)}\Bigr) -y e_{\tau^{p(\alpha)}(\alpha)} +x \Bigl(\sum\limits_{l= 1}^{t(\kappa(\alpha))} e_{\kappa^l(\alpha)}\Bigr)\\
	&= \frac{1}{2}\frac{y+x}{y-x}e_\alpha + \frac{1}{2}e_\alpha +  \Bigl(\sum\limits_{l= 1}^{p(\alpha)-1}e_{\tau^l(\alpha)}\Bigr) -y e_{\tau^{p(\alpha)}(\alpha)} +x \Bigl(\sum\limits_{l= 1}^{t(\kappa(\alpha))} e_{\kappa^l(\alpha)}\Bigr) \; \mbox{\rm for any}\; \alpha \in \Phi_+, \; \kappa(\alpha) \in \Phi_-. \\
\end{align*}
Similarly, we have:
\begin{align*}
\overline{\res}_x^{-1}(e_\beta) &=  z\biggl(\Bigl(\sum\limits_{l= 0}^{t(\beta)-1} e_{\kappa^l(\beta)}\Bigr) + e_{\kappa^{t(\beta)}(\beta)}\biggr) -x
\kappa\Bigl(\sum\limits_{l= 0}^{t(\beta)-1}e_{\kappa^l(\beta)}\Bigr) \\
	r^\sharp_{x,y}(e_\beta) &= \frac{y}{y-x} e_\beta + \Bigl(\sum\limits_{l= 1}^{t(\beta)}e_{\kappa^l(\beta)}\Bigr) \\
	&= \frac{1}{2}\frac{y+x}{y-x}e_\beta + \frac{1}{2}e_\beta + \Bigl(\sum\limits_{l= 1}^{t(\beta)}e_{\kappa^l(\beta)}\Bigr) \quad \mbox{\rm for any}\quad \beta \in \Phi_- \\
\overline{\res}_x^{-1}(q_i) &= z \tau(w_i) - x w_i \\
r^\sharp_{x,y}(q_i) &= \frac{x}{y-x}q_i + \tau(w_i) = \frac{1}{2}\frac{y+x}{y-x} q_i + f_i;
\end{align*}
see (\ref{E:basish}) for the definition of the elements $q_i$, $w_i$ and $f_i$ of the Lie algebra $\lieh$.
Since $\kappa^{-l}(i,j) = (a,b)$ if and only if $\tau^l(b,a) = (j,i)$ the claim now follows by comparison with (\ref{E:finalformula1}).
\end{proof}

\begin{remark}
  In \cite{Polishchuk2}, Polishchuk derived  explicit formulae for the solutions of the associative Yang--Baxter equation (AYBE)
  for $\mathsf{Mat}_{n \times n}(\CC)$, arising from simple vector bundles  on an arbitrary Kodaira cycle $E$ of projective lines. However,
  his answer used a  different combinatorial pattern, based on the recursive description  of discrete parameters describing the multi--degrees of  simple vector bundles on $E$, obtained in  the works \cite{OldSurvey,Burban1}. It follows from the analysis made by  Schedler \cite{Schedler} that not every
  quasi--constant (quasi--)trigonometric solutions of CYBE for the Lie algebra $\mathfrak{sl}_n(\CC)$ can be lifted to a solution
  of the associative Yang--Baxter equation for $\mathsf{Mat}_{n \times n}(\CC)$. Therefore, the combinatorial patterns   of the
  trigonometric solutions  of CYBE for $\mathfrak{sl}_n(\CC)$ (see \cite{BelavinDrinfeld}), of the quasi--trigonometric solutions  of the
  CYBE for $\mathfrak{sl}_n(\CC)$ (see \cite{KPSST, PopStolin}) and of the trigonometric solutions of the
  AYBE  for  $\mathsf{Mat}_{n \times n}(\CC)$ (see \cite{Polishchuk2, LekiliPolishchuk}) share similar features  but are different.

  In the light of the work \cite{BurbanGalinat}, it is natural to expect that any trigonometric solution of the
  CYBE arises as the  geometric $r$--matrix of  a pair $(E, \kA)$, where $E$ is a nodal
  Weierstra\ss{} cubic (such realizability is known for all elliptic and rational solutions; see \cite{BurbanGalinat}
  and references therein). On the other hand, the appearance of the Fukaya categories of
  higher genus Riemann surfaces in the classification of trigonometric solutions of AYBE \cite{LekiliPolishchuk} indicates, that a  geometrization of the trigonometric solutions of CYBE could lead to further surprises.

  It would be  interesting to find out, which  trigonometric solutions of CYBE  are gauge equivalent to quasi--trigonometric ones. It is also quite unclear, how the Belavin--Drinfeld combinatorics of the
  the trigonometric solutions of CYBE \cite{BelavinDrinfeld} is reflected in the geometric properties of the sheaf of Lie algebras $\kA$.  A further study of these relations will be a subject of future work.
\end{remark}

\begin{corollary}\label{C:Limits} First observe  that the map $\mathfrak{sl}_n(\CC) \lar \mathfrak{sl}_n(\CC), X \mapsto - X^t$ is a Lie algebra  automorphism, where $X^t$ denotes the transposed matrix of $X$.
  Sheafifying this map, we get an isomorphism of the sheaves of Lie algebras $\kA_{(n, d)} \lar \kA_{(n, -d)}$. Hence, we get isomorphisms $\kA_{(n, d)} \cong  \kA_{(n, -d)} \cong \kA_{(n, n-d)} = \kA_{(n, c)}$.
  As a consequence of the geometric theory of CYBE,
we get the following analytic results about the quasi--trigonometric $r$--matrix $r_{(c,d)}$ given by the formula (\ref{E:finalformula1}).
\begin{enumerate}
\item  The solutions $r_{(c,d)}$ and $r_{(d, c)}$ are gauge equivalent.
\item Moreover, $r_{(c,d)}$   is equivalent to an appropriate  degeneration of Belavin's elliptic $r$--matrix \cite{Belavin} corresponding to the primitive $n$--root of unity $\varepsilon = \exp\left(\dfrac{2\pi i c}{n}\right)$.
\item Finally, its appropriate rational degeneration is equivalent  to the rational solutions from \cite[Theorem 9.8]{BH}.
\end{enumerate}
\end{corollary}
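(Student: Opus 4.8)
The plan is to transport all three assertions to the geometric side, where they become statements about the sheaf of Lie algebras $\kA_{(n,d)}$ and its degenerations. By Theorem \ref{T:GeomRmatrExplicit}, the solution $r_{(c,d)}$ is (a trivialization of) the geometric $r$--matrix $\rho$ attached to the pair $\bigl(E, \kA_{(n,d)}\bigr)$ with $E$ nodal. Since $\rho$ depends only on the isomorphism class of the pair, and since two trivializations of the same geometric $r$--matrix produce gauge--equivalent solutions (as recalled in Section 2), each of the three claims reduces to a statement about $\kA_{(n,d)}$ and its behaviour in the universal family of Weierstra\ss{} cubics.

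For assertion (1) I would invoke the chain of isomorphisms $\kA_{(n,d)} \cong \kA_{(n,-d)} \cong \kA_{(n,n-d)} = \kA_{(n,c)}$ recorded in the preamble (the first induced by $X \mapsto -X^t$, the second from $\kA_{(n,e)} \cong \kA_{(n,n+e)}$ recalled in the Introduction and in Corollary \ref{C:SheafA}). Applying Theorem \ref{T:GeomRmatrExplicit} with the roles of $c$ and $d$ interchanged, $r_{(d,c)}$ is the trivialized geometric $r$--matrix of $\bigl(E, \kA_{(n,c)}\bigr)$. An isomorphism $\psi\colon \kA_{(n,d)} \to \kA_{(n,c)}$ of sheaves of Lie algebras (covering $\mathrm{id}_E$) intertwines the two geometric $r$--matrices as sections of $\kA \boxtimes \kA$; after trivializing, composing one trivialization with $\psi$ yields a second trivialization of the \emph{same} pair, and by the discussion of gauge equivalence in Section 2 this produces exactly an $R$--linear automorphism $\phi \in \Aut_R\bigl(\lieg \otimes_\CC R\bigr)$ relating $r_{(c,d)}$ and $r_{(d,c)}$.

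Assertions (2) and (3) are degeneration statements and require a family argument. I would work over the base parametrizing Weierstra\ss{} cubics by $(g_2, g_3)$: a generic point gives an elliptic curve, the nodal locus $g_2^3 = 27 g_3^2$ with $(g_2, g_3) \neq (0,0)$ gives our nodal $E$, and the origin gives the cuspidal curve. One constructs a relative simple bundle of rank $n$ and degree $d$ whose fibrewise adjoint sheaf is $\kA_{(n,d)}$, so that the geometric $r$--matrix is defined over the whole family and varies analytically with $(g_2, g_3)$. On the elliptic fibers, \cite[Theorem 5.5]{BH} identifies this geometric $r$--matrix with Belavin's elliptic solution \cite{Belavin}, with the coprime degree $d$ (equivalently $c = n-d$) fixing the primitive root of unity $\varepsilon = \exp\!\bigl(2\pi i c/n\bigr)$; on the cuspidal fiber, \cite[Theorem 9.8]{BH} identifies it with the distinguished rational solution. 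Specializing the nodal fiber as a limit of elliptic fibers then yields (2), and specializing the cuspidal fiber as a limit of nodal fibers yields (3), in each case up to the change of spectral variable and gauge transformation built into the equivalence relation (\ref{E:equivalence}).

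The main obstacle is making these limits analytically rigorous rather than merely formal. The delicate point is matching the trivializations $\xi$ and the spectral coordinates across the family as the modular parameter degenerates---the passage elliptic $\to$ nodal corresponds to letting the nome approach its boundary value, while nodal $\to$ cuspidal corresponds to $(g_2, g_3) \to (0,0)$---so that the Jacobi--type kernel of Belavin's solution converges to the trigonometric expression (\ref{E:finalformula1}), and the latter in turn to the rational kernel. Already the case $n = 2$, $c = 1$, where $r_{(c,d)}$ is the solution (\ref{E:trigonom2}), is noted in the Introduction to be non--trivial from the analytic viewpoint. I would therefore carry out the convergence argument on the residue--evaluation description (\ref{E:rsharp}) of $\rho$, where the family dependence is controlled by the algebraic degeneration of the sheaf $\kA$ and of the one--form $\omega$, rather than attempting to pass to the limit directly in the closed formulae.
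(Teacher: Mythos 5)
Your proposal is correct and takes essentially the same route as the paper: the paper obtains (1) precisely from the chain $\kA_{(n,d)} \cong \kA_{(n,-d)} \cong \kA_{(n,c)}$ together with Theorem \ref{T:GeomRmatrExplicit} and the fact (Section 2) that different trivializations of the same geometric $r$--matrix yield gauge--equivalent solutions, and it obtains (2) and (3) by placing the nodal pair $\bigl(E, \kA_{(n,d)}\bigr)$ in the family of Weierstra\ss{} cubics, whose elliptic and cuspidal fibers are identified with Belavin's solutions and the distinguished rational solutions in \cite[Theorems 5.5 and 9.8]{BH}. Your additional care about the analytic rigour of the limits via the residue--evaluation description is exactly the mechanism the paper delegates to the geometric theory developed in \cite{BK4, BH, BurbanGalinat}.
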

\section{Dedication}
This paper is dedicated to the memory of Petr Kulish, one of the leaders and driving forces of the Yang--Baxter revolution
which has changed the world of mathematics and mathematical physics. He was always interested in new solutions of the
Yang--Baxter equation. The following list of problems may serve as our epitaph to him.

\smallskip
\noindent
1. Let $\mathfrak{g}$ be any finite dimensional simple Lie algebra over the field of complex numbers.
Let $\widehat{\Gamma}$ be its extended Dynkin diagram. Next,
\begin{itemize}
\item let $\Gamma_1$ and $\Gamma_2$ be two subdiagrams of $\widehat{\Gamma}$ and $\tau: \Gamma_1 \to \Gamma_2$ be a map such
  that
\begin{itemize}
\item $(\alpha , \beta) = \bigl(\tau(\alpha), \tau (\beta)\bigr)$ for any $\alpha ,\beta \in \Gamma_1$;
\item for any $\alpha \in \Gamma_1$ there exists $m \in \NN$ such that $\tau^m(\alpha) \notin \Gamma_1$;
\item $\Gamma_1$ does not contain the affine simple root $\alpha_0\in \widehat{\Gamma}$.
\end{itemize}
\item Let $r_\circ \in \lieh \wedge \lieh$ be  such that for any $\alpha\in\Gamma_1 $ we have:
$$
  \bigl(\alpha \otimes 1 + 1\otimes \tau(\alpha)\bigr)\left(r_\circ + \frac{\gamma_\circ}{2}\right) = 0,
  $$
where $\gamma_\circ$ is the Cartan part of the Casimir element $\gamma\in {\mathfrak{g}^{\otimes 2}}$.
\end{itemize}
Prove that
the following formula defines a quasi--trigonometric solution of CYBE and any quasi--trigonometric solution of CYBE
is gauge equivalent to some solution of this form:
\begin{equation}\label{E:Conjecture}
r(x,y)=r_{\mathsf{st}} (x,y) +  r_\circ + \sum_{\alpha\in \mathsf{Span}\{\Gamma_1^+ \}}\sum_{k} e_{\tau^k (\alpha)}\wedge e_{-\alpha}.
\end{equation}
Here, if $\beta = \tau^k(\alpha)$ is an affine root (i.e. if it contains $\alpha_0$), then $e_\beta$ reads as $e_\beta x$ if it stands
on the left hand side of the tensor product and as $e_\beta y$ if it stands on the right hand side.

\smallskip
\noindent
2. The same datum  $\bigl(\Gamma_1, \Gamma_2 , \tau\bigr)$ defines a trigonometric solution of CYBE; see
\cite{BelavinDrinfeld}.
What is a connection between these two solutions, trigonometric and quasi--trigonometric?

\smallskip
\noindent
3. How to quantize the quasi--trigonometric solution above? Is it possible to ``affinize'' quantum twists which were constructed
in \cite{ESS} as this was done in \cite{KPSST} in some particular cases?


\begin{thebibliography}{99}

\bibitem{Atiyah}
M.~Atiyah, \textit{Vector bundles over an elliptic curve},
  Proc.\/ Lond.\/ Math.\/ Soc.\/ (3)  \textbf{7}  (1957)  414--452.



\bibitem{BarthHulekPetersVen}
W.~Barth, K.~Hulek, C.~Peters,  A.~Van de Ven, \textit{Compact complex
surfaces,} Ergebnisse der Mathematik und ihrer Grenzgebiete (3) \textbf{4},
 Springer (2004).




\bibitem{Belavin}
A.~Belavin,
\textit{
Discrete groups and integrability of quantum systems}, Funct. Anal. Appl.
 \textbf{14}, no.~{4}, 18--26, 95 (1980).


\bibitem{BelavinDrinfeld}
A.~Belavin,  V.~Drinfeld,
\textit{Solutions of the classical Yang--Baxter equation for simple Lie
  algebras},
Funct. Anal. Appl. {\bf 16}, no.~{3},  159--180 (1983).

\bibitem{BelavinDrinfeld2}
A.~Belavin,  V.~Drinfeld,
\textit{The classical Yang--Baxter equation for simple Lie algebras}, Funct. Anal. Appl.
  \textbf{17},  no.~{3}, 69--70   (1983).




\bibitem{Survey} L.~Bodnarchuk, I.~Burban,  Yu.~Drozd, G.-M.~Greuel,
  \textit{Vector bundles and torsion free sheaves on degenerations of elliptic
  curves},
Global aspects of complex geometry,  83--128, Springer--Verlag (2006).


\bibitem{BodnarchukDrozd}
L.~Bodnarchuk,  Yu.~Drozd, {\it Stable vector bundles over cuspidal
  cubics},
  Cent. Eur. J. Math.  {\bf 1}  (2003),  no.~{4}, 650--660.



\bibitem{Burban1}
I.~Burban, \textit{Stable vector bundles on a rational curve with one
node}, Ukra{\"\i}n. Mat. Zh.,  {\bf 55}  (2003),  no.~{7}, 867--874.

\bibitem{Thesis} I.~Burban, \textit{Abgeleitete Kategorien und Matrixprobleme},
PhD Thesis, Kaiserslautern 2003, available at
\textsf{https://kluedo.ub.uni-kl.de/frontdoor/index/index/year/2003/docId/1434}.




\bibitem{OldSurvey}
  I.~Burban,  Yu.~Drozd, G.-M.~Greuel,
  \emph{Vector bundles on singular projective curves}, Applications of
  algebraic geometry to coding theory, physics and computation,
  Kluwer Academic Publishers, 1--15 (2001).

\bibitem{BurbanGalinat}  I.~Burban, L.~Galinat, \emph{Torsion free sheaves on Weierstra\ss{} cubic curves and the classical Yang--Baxter equation}, \texttt{arXiv:1612.04586}.

\bibitem{BH}
I.~Burban, T.~Henrich, \emph{Vector bundles on plane cubic curves and the classical Yang--Baxter equation}, Journal of the European Math. Society \textbf{17}, no.~{3}, 591--644 (2015).




\bibitem{BK4}
I.~Burban, B.~Kreu\ss{}ler,
\emph{Vector bundles on degenerations of elliptic curves and Yang--Baxter equations}, Memoirs of the AMS \textbf{220} (2012), no.~{1035}.




\bibitem{ChariPressley}
V.~Chari, A.~Pressley, \emph{A guide to quantum groups},  Cambridge University Press (1994).






\bibitem{Cherednik}
I.~Cherednik,
\emph{Determination of $\tau$--functions for generalized affine Lie algebras},
Funct. Anal. Appl. \textbf{17} (1983), no.~{3}, 93--95.






\bibitem{DrozdGreuel}
Yu.~Drozd, G.-M.~Greuel, \textit{Tame and wild projective curves and
classification of vector bundles},   J. Algebra  \textbf{246}  (2001),  no.~{1}, 1--54.


\bibitem{EtingofSchiffmann}
P.~Etingof, O.~Schiffmann, \emph{Lectures on quantum groups}, International Press (2002).

\bibitem{ESS}
P.~Etingof, T.~Schedler, O.~Schiffmann, \textit{Explicit quantization of dynamical r--matrices for finite dimensional semisimple Lie algebras},   JAMS  \textbf{13}  (2000),  no.~{3}, 595--609.

\bibitem{FaddeevTakhtajan}
L.~Faddeev, L.~Takhtajan, \emph{Hamiltonian methods in the theory of solitons},  Classics in Mathematics,  Springer (2007).


\bibitem{GelfandPonomarev}
I.~M.~Gelfand, V.~Ponomarev, \emph{Remarks on the classification of a pair of commuting linear transformations in a finite--dimensional space},
Funct. Anal. Appl  \textbf{3} (1969) no.~4, 81--82.



\bibitem{KPSST} S.~Khoroshkin, I.~Pop, M.~Samsonov, A.~Stolin, V.~Tolstoy, \textit{On some Lie bialgebra structures on polynomial algebras and their quantization}, Comm. Math. Phys.~\textbf{282} (2008), no.~{3}, 625–-662.


\bibitem{LekiliPolishchuk}
Y.~Lekili, A.~Polishchuk, \emph{Associative Yang--Baxter equation and Fukaya categories of square--tiled surfaces},  \texttt{arXiv:1608.08992}.


\bibitem{MSZ}
F.~Montaner, A.~Stolin, E.~Zelmanov,
\textit{Classification of Lie bialgebras over current algebras},
Selecta Math. \textbf{16} (2010), no.~{4}, 935--962.


\bibitem{Polishchuk1}
A.~Polishchuk, {\it  Classical Yang--Baxter equation and the $A\sb
\infty$--constraint,}
Adv. Math.  {\bf 168}  (2002),  no.~{1}, 56--95.

\bibitem{Polishchuk2}
A.~Polishchuk,
\textit{Massey products on cycles of projective lines and trigonometric solutions of the Yang--Baxter equations}, Algebra, arithmetic, and geometry: in honor of Yu. I. Manin.
Vol. II, 573--617, Progr. Math.~\textbf{270}, Birkh\"auser  (2009).

\bibitem{PopStolin}
I.~Pop, A.~Stolin, \emph{Lagrangian subalgebras and quasi--trigonometric $r$--matrices},  Lett. Math. Phys.~\textbf{85} (2008), no. 2-3, 249--262.


\bibitem{Schedler}
T.~Schedler, \emph{Trigonometric solutions of the associative
Yang--Baxter equation},
  Math. Res. Lett.  \textbf{10}  (2003),  no.~{2--3}, 301--321.

\bibitem{Stolin1}
A.~Stolin,
\textit{On rational solutions of Yang--Baxter equation for} $\mathfrak{
  sl}(n),$
Math. Scand.  {\bf 69}, no.~{1}, 57--80 (1991).

\bibitem{Stolin2}
A.~Stolin,
\textit{On rational solutions of Yang--Baxter equations. Maximal orders in loop algebra},
 Comm. Math. Phys. \textbf{141} (1991), no.~{3}, 533--548.


\end{thebibliography}
\end{document}